\documentclass{amsart}

\usepackage{amsmath, amssymb, amsthm, amsfonts, marginnote, stmaryrd}

\usepackage{color}

\input xy
\xyoption{all}

\usepackage{hyperref}

\swapnumbers
\numberwithin{equation}{section}

\theoremstyle{plain}
\newtheorem{theorem}[subsubsection]{Theorem}
\newtheorem{lemma}[subsubsection]{Lemma}
\newtheorem{prop}[subsubsection]{Proposition}
\newtheorem{cor}[subsubsection]{Corollary}
\newtheorem{conj}[subsubsection]{Conjecture}

\theoremstyle{definition}
\newtheorem{defn}[subsubsection]{Definition}

\newtheorem{remark}[subsubsection]{Remark}
\newtheorem{exam}[subsubsection]{Example}

\def\BB{\mathbb{B}}
\def\CC{\mathbb{C}}

\def\GG{\mathbb{G}}

\def\LL{\mathbb{L}}

\def\PP{\mathbb{P}}

\def\RR{\mathbb{R}}

\def\ZZ{\mathbb{Z}}

\newcommand\cB{\mathcal{B}}

\newcommand\cE{\mathcal{E}}
\newcommand\cF{\mathcal{F}}
\newcommand\cG{\mathcal{G}}
\newcommand\cH{\mathcal{H}}

\newcommand\cK{\mathcal{K}}
\newcommand\cL{\mathcal{L}}

\newcommand\cN{\mathcal{N}}
\newcommand\cO{\mathcal{O}}

\newcommand\cU{\mathcal{U}}
\newcommand\cV{\mathcal{V}}
\newcommand\cW{\mathcal{W}}

\def\bI{\mathbf{I}}

\def\bK{\mathbf{K}}

\def\bP{\mathbf{P}}

\newcommand\frM{\mathfrak{M}}
\newcommand\frN{\mathfrak{N}}

\newcommand\frS{\mathfrak{S}}

\newcommand\frX{\mathfrak{X}}

\newcommand\frZ{\mathfrak{Z}}

\newcommand\frb{\mathfrak{b}}
\newcommand\frc{\mathfrak{c}}

\newcommand\frg{\mathfrak{g}}

\newcommand\frh{\mathfrak{h}}

\newcommand\frl{\mathfrak{l}}

\newcommand\frn{\mathfrak{n}}
\newcommand\frp{\mathfrak{p}}
\newcommand\frq{\mathfrak{q}}

\newcommand\frt{\mathfrak{t}}

\newcommand\tilW{\widetilde{W}}

\newcommand\bimon{\mathit{bimon}}

\newcommand{\Bun}{\textup{Bun}}

\newcommand\ev{\textup{ev}}

\newcommand{\Gr}{\textup{Gr}}

\newcommand{\Hk}{\textup{Hk}}

\newcommand\id{\textup{id}}
\renewcommand{\Im}{\textup{Im}}

\newcommand\inv{\textup{inv}}

\newcommand\Lie{\textup{Lie}}
\newcommand\Loc{\textup{Loc}}

\newcommand\pr{\textup{pr}}

\newcommand{\rel}{\textup{rel}}

\newcommand{\Res}{\textup{Res}}

\newcommand\rk{\textup{rk}}

\newcommand\Spec{\textup{Spec }}
\newcommand\Spf{\textup{Spf }}

\newcommand\Sym{\textup{Sym}}

\newcommand{\univ}{\textup{univ}}

\newcommand\Aut{\textup{Aut}}

\newcommand\GL{\textup{GL}}

\renewcommand\sl{\mathfrak{sl}}

\newcommand{\Gm}{\GG_m}

\newcommand{\ad}{\textup{ad}}
\newcommand{\Ad}{\textup{Ad}}

\newcommand\xcoch{\mathbb{X}_*}

\renewcommand\a\alpha
\renewcommand\b\beta
\newcommand\g\gamma
\newcommand\G\Gamma
\renewcommand\d\delta
\newcommand\D\Delta

\renewcommand{\th}{\theta}

\newcommand{\ph}{\varphi}
\newcommand{\Sig}{\Sigma}
\newcommand{\s}{\sigma}
\renewcommand{\t}{\tau}

\newcommand{\y}{\eta}
\newcommand{\z}{\zeta}

\renewcommand{\l}{\lambda}
\renewcommand{\L}{\Lambda}
\newcommand{\om}{\omega}
\newcommand{\Om}{\Omega}
\newcommand{\io}{\iota}

\DeclareMathOperator{\Eis}{Eis}

\newcommand{\incl}{\hookrightarrow}
\newcommand{\isom}{\stackrel{\sim}{\to}}

\newcommand{\surj}{\twoheadrightarrow}

\newcommand{\trpose[1]}{\leftexp{t}{#1}}

\newcommand{\twtimes}[1]{\times^{#1}}

\newcommand{\wt}[1]{\widetilde{#1}}
\newcommand{\wh}[1]{\widehat{#1}}
\newcommand\quash[1]{}
\newcommand\un{\underline}
\newcommand\ov{\overline}
\newcommand\bs{\backslash}
\newcommand\ot{\otimes}
\newcommand{\op}{\oplus}
\newcommand{\tl}[1]{[\![#1]\!]}
\newcommand{\lr}[1]{(\!(#1)\!)}

\newcommand{\cohog}[2]{\textup{H}^{#1}({#2})}     
\newcommand{\cohoc}[2]{\textup{H}_{c}^{#1}({#2})}     

\newcommand\upH{\textup{H}}

\newcommand{\oll}[1]{\overleftarrow{#1}}
\newcommand{\orr}[1]{\overrightarrow{#1}}

\newcommand\dG{G^{\vee}}

\newcommand\fl{f\ell}
\newcommand\pt{\textup{pt}}

\newcommand\vn{\varnothing}

\newcommand\xr{\xrightarrow}
\renewcommand\c{\circ}

\newcommand{\quot}{/\hspace{-.25em}/}
\newcommand{\Sph}{\mathit{sph}}

\newcommand{\Maps}{\textup{Maps}}

\newcommand{\sph}{\mathit{sph}}

\newcommand{\aff}{\mathit{aff}}

\newcommand{\Sh}{\mathit{Sh}}

\newcommand{\beq}{\begin{equation}}
\newcommand{\eeq}{\end{equation}}

\newcommand{\ssupp}{\mathit{ss}}

\newcommand\sss{\subsubsection}

\newcommand\IndCoh{\operatorname{IndCoh}}

\newcommand\na{\natural}
\renewcommand\r{\rho}

\newcommand\bu{\bullet}

\newcommand\lax{\textup{lax}}

\title{The global nilpotent cone for universal curves}
\author{David Nadler}
\address{Department of Mathematics\\University of California, Berkeley\\Berkeley, CA  94720-3840}
\email{nadler@math.berkeley.edu}
\author{Zhiwei Yun}
\address{Department of Mathematics, MIT,  77 Massachusetts Ave., Cambridge, MA 02139}
\email{zyun@mit.edu}

\subjclass[2020]{14D24}

\dedicatory{}	
\keywords{Hitchin fibration, global nilpotent cone, Betti geometric Langlands, singular support}

\begin{document}


\begin{abstract}
We construct a conic Lagrangian in the cotangent bundle of the moduli stack of $G$-bundles over the universal curve, restricting to the global nilpotent cone for each curve. It gives rise to a singular support condition suitable for the Betti geometric Langlands correspondence for families of curves and the automorphic gluing functor studied in \cite{NY-glue}. We also prove a family version of ``local constancy of Hecke operators," generalizing a result from \cite{NY}. 


\end{abstract}

\maketitle

\tableofcontents

\section{Introduction}

\subsection{Betti geometric Langlands}
Let $X$ be a smooth connected projective algebraic curve over $\CC$ and $G$ be a reductive group over $\CC$. In Betti geometric Langlands \cite{BN}, the automorphic category consists of sheaves on $\Bun_{G}(X)$ with singular support in the {\em global nilpotent cone} $\cN_X \subset T^*\Bun_G(X)$, i.e. the zero fiber of the Hitchin map. The Betti geometric Langlands equivalence, conjectured in \cite{BN} and proved in the recent breakthrough \cite{GLC}, asserts that the automorphic category $\Sh_{\cN_X}(\Bun_G(X))$ is equivalent to the spectral category $\IndCoh_{\cN^\vee}(\Loc_{\dG}(\un X))$, where $\Loc_{\dG}(\un X)$ is the (derived) moduli stack of Betti $\dG$-local systems on $X$, which only depends on the underlying topological surface $\un X$ of $X$.

Let $\pi: \frX\to S$ be a family of smooth projective curves. The spectral categories of the fibers $\frX_s=\pi^{-1}(s)$ vary locally constantly as $s$ varies in $S$, because the family of topological surfaces $\un{\frX_s}$ are locally trivializable. The Betti geometric Langlands equivalence then implies that the automorphic categories of $\frX_s$ also vary locally constantly. This is far from being clear from the definition of the automorphic categories which heavily depends on the complex structure of $\frX_s$.

It would be desirable to prove the local constancy of the automorphic categories for the family $\frX\to S$ directly without reference to the spectral side. In fact, it will take some effort even to formulate the local constancy of the automorphic categories precisely, which will be done in this paper.

\subsection{The universal nilpotent cone} 




Consider the family $\Pi: \Bun_{G}(\pi)\to S$ of moduli stacks of $G$-bundles along fibers of $\pi$. As $s$ moves in $S$, the automorphic categories $\Sh_{\cN_{\frX_{s}}}(\Bun_{G}(\frX_{s}))$ can be informally thought of as a sheaf of categories over $S$. A natural approach to establish local constancy is to construct a ``connection'' on this sheaf of categories, i.e., we need to specify which local sections of this sheaf are ``horizontal''. In particular, we need to specify which objects in the global category $\Sh(\Bun_{G}(\pi))$ are horizontal. In this paper, we propose a definition of such horizontal sheaves by imposing a singular support condition $\Bun_{G}(\pi)$. The key step in this definition is a construction of a conic Lagrangian in $T^*\Bun_G(\pi)$ that is a family version of the global nilpotent cone.

For any closed point $s\in S$, the fiber $(T^{*}\Bun_G(\pi))_{s}$ of $T^{*}\Bun_{G}(\pi)$ maps to the cotangent stack $T^{*}\Bun_{G}(\frX_{s})$
\begin{equation*}
p_{s}: (T^{*}\Bun_G(\pi))_{s}\to T^{*}\Bun_{G}(\frX_{s})
\end{equation*}
The first main result is:

\begin{theorem}[For full statement see Theorem \ref{th:Eis cone closed}] There exists a unique closed conic Lagrangian $\cN_{\pi}\subset T^*\Bun_G(\pi)$ such that for any closed point $s\in S$, $p_{s}$ restricted to the fiber $(\cN_{\pi})_{s}$ over $s$ gives a set-theoretic bijection $p_{s}: (\cN_{\pi})_{s}\to \cN_{\frX_{s}}$. 
\end{theorem}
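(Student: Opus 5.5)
The plan is to construct $\cN_\pi$ as a family version of the Eisenstein (Laumon) description of the global nilpotent cone, and then to check fiberwise bijectivity, closedness and Lagrangianity in that order.

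\textbf{The linear algebra of $p_s$.} Since $\Pi:\Bun_G(\pi)\to S$ is smooth, at a point $(s,E)$ there is an exact sequence
\[
0\to T^*_sS\to T^*_{(s,E)}\Bun_G(\pi)\to T^*_E\Bun_G(\frX_s)\to 0 .
\]
So $p_s$ is an affine bundle with fibers $T^*_sS$. In deformation-theoretic terms, $T_{(s,E)}\Bun_G(\pi)$ is $\upH^1$ of the Atiyah algebroid of $E$ together with the family direction, and a covector restricting to a Higgs field $\varphi\in \upH^0(\frX_s,\ad^*E\otimes\omega)$ is a choice of extension of $\varphi$. Proving the theorem therefore means choosing, canonically and algebraically in families, one lift $\tilde\varphi$ of each nilpotent $\varphi$, in such a way that the union of the lifts is closed and Lagrangian. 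Uniqueness is then automatic: a conic Lagrangian that is bijective onto $\cN_{\frX_s}$ over every $s$ is the union of the closures of its generic parts. So the real content is existence, and the lifts need only be specified over a dense open subset of each component, provided the closure does not acquire extra points.

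\textbf{Construction via Borel reductions.} By Laumon, $\cN_{\frX_s}$ is the union, over the components $\Bun_B^\lambda(\frX_s)$, of the covectors $\varphi$ at $E$ for which there is a $B$-reduction $E_B$ with $\varphi\in\upH^0(\frn^*_{E_B}\otimes\omega)$. Equivalently, $\varphi$ annihilates the image of $T\Bun_B\to T\Bun_G$, i.e. $\varphi$ lies in the conormal of the map $\Bun_B\to\Bun_G$. I define
\[
\cN_\pi:=\bigcup_\lambda \Im\Big(T^*_{\Bun_B^\lambda(\pi)}\Bun_G(\pi)\Big),
\]
the union of the images of the conormals to the maps $\Bun_B^\lambda(\pi)\to\Bun_G(\pi)$ of families over $S$.

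Because $\Bun_B(\pi)\to S$ is smooth, the image of $T\Bun_B(\pi)$ already surjects onto $T_sS$. Hence a covector $\xi$ in the conormal is determined by its restriction $\varphi$ to $T\Bun_G(\frX_s)$: its value on any tangent vector is computed by first modifying that vector by an element of $T\Bun_B(\pi)$ so that it becomes vertical. This shows at once that $(\cN_\pi)_s\to\cN_{\frX_s}$ is surjective, and injective on the contribution of each single reduction $E_B$.

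\textbf{Main obstacle: independence of the reduction.} I expect the hardest step to be showing that the lift $\tilde\varphi$ does not depend on which $B$-reduction $E_B$ (compatible with $\varphi$) is used. Two reductions give two lifts, which differ by an element of $T^*_sS$ obtained by pairing $\varphi$ with the Kodaira--Spencer directions through the two different Atiyah splittings.

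To prove they agree, I would first reduce to the case where $\varphi$ is generic in its component. There the compatible reductions form a connected family, namely the Springer-type fiber of $B$-reductions. Along this connected family the lift varies algebraically inside the affine space $p_s^{-1}(\varphi)\cong T^*_sS$. On the other hand, each lift lies in a conormal and is therefore isotropic, so its derivative along the family of reductions must vanish; hence the lift is constant.

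If this argument fails, my fallback is to compute directly. Locally on $\frX_s$, choose a connection on $E_B$. The resulting correction term is a pairing of $\varphi\in\frn^*\otimes\omega$ with the $\frb$-valued connection form, and this pairing vanishes because $\frn^*$ pairs trivially with $\frb/\frn$-independent parts.

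\textbf{Closedness and the Lagrangian property.} For closedness I use that on any quasi-compact open subset of $\Bun_G(\pi)$ only finitely many $\lambda$ contribute, and that each relevant map is proper after compactifying, in the style of Drinfeld, the $B$-reductions of $E$. This gives a finite union of closed images.

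For the Lagrangian property, note that conormals to maps are isotropic, so $\cN_\pi$ is isotropic. Its dimension is
\[
\dim S+\dim\cN_{\frX_s}=\dim S+\dim\Bun_G(\frX_s)=\dim\Bun_G(\pi),
\]
using the fiberwise bijection and the fact that $\cN_{\frX_s}$ is Lagrangian. Being isotropic of half dimension, $\cN_\pi$ is Lagrangian.
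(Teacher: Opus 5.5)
Your $\cN_\pi$, the union of the images of the conormals to $\Bun^\lambda_B(\pi)\to\Bun_G(\pi)$, is exactly the paper's universal Eisenstein cone $\cN^{\Eis}_\pi=\orr{\frp}(0_{\Bun_B(\pi)})$. Your surjectivity argument and your "one reduction gives one lift" observation are the easy half of Lemma~\ref{l:NEis bij Nrel}. The step you flag as the main obstacle, however, is not proved by your argument.

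\textbf{Independence of the reduction.} Three things go wrong in your main argument.
\begin{enumerate}
\item You cannot reduce to generic $\varphi$. Generic bijectivity holds for the lifting of \emph{any} closed conic relative Lagrangian (Proposition~\ref{p:lifting Lag}(2)). The content of the theorem is bijectivity over every point, and this fails for liftings in general (see the example after Proposition~\ref{p:lifting Lag}).
\item The set of $B$-reductions of $E$ compatible with $\varphi$ is not connected. It splits at least by degree $\lambda$, and compatible reductions of different degrees do occur even for nonzero $\varphi$. So no algebraic family joins two of them.
\item Isotropy does not force a covector moving inside the single fiber over $(s,E)$ to be constant. Such a vertical tangent vector is only constrained against tangent directions of $\cN_\pi$, and these surject onto $T_sS$ only at generic points.
\end{enumerate}
Your fallback has a workable shape, but the reason you give is wrong: the two connection forms lie in \emph{different} Borel subalgebras, so no pointwise orthogonality kills the pairing. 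What actually works is a global argument.
\begin{itemize}
\item Take a local $E_B$-connection $\nabla$ and a local $E'_B$-connection $\nabla'$, and set $\beta=\langle\varphi,\nabla-\nabla'\rangle$. This is independent of the choices because $\varphi\perp E_B(\frb)+E'_B(\frb)$, so it glues to a global section of $\omega^{\otimes 2}$.
\item A \v{C}ech computation shows the two lifts differ by $v\mapsto\pm\langle\beta,\kappa(v)\rangle$, where $\kappa$ is the Kodaira--Spencer map.
\item On the dense open set where the pair of reductions has its generic relative position $w$, $E$ reduces to $B\cap{}^{w}B$ compatibly with both. There you may take $\nabla=\nabla'$, so $\beta$ vanishes there and hence $\beta=0$.
\end{itemize}
The paper argues differently. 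It joins $E_B$ to $E'_B$ by the unique chain of $F$-rational Borels in simple-reflection steps, all of which contain $\varphi_\eta$ in their nilradicals. At each step it uses the common parabolic reduction $\cE_{P_j}$ with $\varphi\in\cE_{P_j}(\frn_{P_j})$, together with injectivity of $T^*_sS\to T^*\Bun_B(\pi)$, to propagate $d\frp(\tilde\varphi)=0$ along the chain.

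\textbf{Closedness.} Pushing forward along a proper compactification gives closed sets containing the $\cW^\lambda$. The real issue is showing the boundary adds no new covectors, i.e.\ $\ov\cW^\lambda\subset\bigcup_{\lambda'\ge\lambda}\cW^{\lambda'}$. This is Lemma~\ref{l:ov W}. The paper proves it for $\GL_n$ using Laumon's compactification, which is smooth over $S$ so that the zero section can be transported; Drinfeld's compactification is singular for general $G$. The proof uses an auxiliary stack with a smooth map to $\Bun^{\lambda'}_B(\pi)$ that relates a generalized reduction to its saturation. General $G$ is then reduced to $\GL_n$ via the wrong-way map of Lemma~\ref{l:Eis GG'}. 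Note also that finiteness of the relevant $\lambda$ over a quasi-compact open uses the bijection onto the finite-type $\cN^{\rel}_\pi$, so it depends on the injectivity step.

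\textbf{Dimension.} Fiberwise Lagrangianity of $\cN_{\frX_s}$ only bounds the dimension from above. You also need every component of $\cN^{\rel}_\pi$ to have dimension $\ge\dim\Bun_G(\pi)$. The paper gets this in Lemma~\ref{l:rel nilp cone} from the Hitchin map. Under \eqref{hyp} the Hitchin base is smooth over $S$ of the same relative dimension as $\Pi$. So $\cN^{\rel}_\pi$ is cut out by that many equations in a stack of dimension $\ge\dim\Bun_G(\pi)+\dim\Pi$.
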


In other words, $\cN_{\pi}$ is a lifting of the relative global nilpotent cone $\cN^{\rel}_{\pi}$ in the relative cotangent stack $T^{*}(\Pi)$. It is a general fact that there exists a unique closed conic Lagrangian in the absolute cotangent stack $T^{*}\Bun_{G}(\pi)$ that maps birationally to $\cN^{\rel}_{\pi}$ (see Proposition \ref{p:lifting Lag}). The fact that $\cN_{\pi}$ maps bijectively to $\cN^{\rel}_{\pi}$ is specific to our situation, whose proof relies on an explicit construction of $\cN_{\pi}$ called the {\em Eisenstein cone}: it is the transport of the zero section in $T^{*}\Bun_{B}(\pi)$ along the Lagrangian correspondence for cotangent bundles induced by the map $\Bun_{B}(\pi)\to \Bun_{G}(\s)$. For a single curve $X$, the observation that $\cN_{X}$ can be identified with the Eisenstein cone goes back to Ginzburg \cite{Gin}.


\subsection{Local constancy of automorphic category}




The local constancy of the automorphic categories can now be rigorous formulated as:

\begin{conj}\label{c:intro lc} Let $U\subset S^{an}$ be a contractible analytic open subset and $s\in U$. Then restriction along the inclusion $i_{s}: \{s\}\incl U$ induces an equivalence
\begin{equation*}
\Sh_{\cN_{\pi}}(\Bun_{G}(\pi_{U}))\isom \Sh_{\cN_{\frX_{s}}}(\Bun_{G}(\frX_{s})).
\end{equation*}
\end{conj}
In particular, there is an action of $\pi_{1}(S,s)$ on the automorphic category $\Sh_{\cN_{\frX_{s}}}(\Bun_{G}(\frX_{s}))$. Now starting with a fixed curve $X$ of genus $g$, considered as a fiber in the universal family of genus $g$ curves $\pi^{\univ}_{g}: \frX_{g}\to \frM_{g}$, where $X$ corresponds to a point $\xi\in \frM_{g}$. Then Conjecture \eqref{c:intro lc} implies there is a canonical action of the mapping class group $\pi_{1}(\frM_{g}, \xi)$ on the automorphic category $\Sh_{\cN_{X}}(\Bun_{G}(X))$. Of course, this action is expected to match the obvious mapping class group action on the spectral side.

Joakim F\ae rgeman and Marius Kj\ae rsgaard recently announced a proof of Conjecture \ref{c:intro lc} that does not rely on the Betti geometric Langlands equivalence. It however uses deep machinery developed in the works leading to \cite{GLC}. Therefore a more direct topological proof would still be of interest.

\subsection{Hecke stability of nilpotent sheaves}

A main result from \cite{NY} is that Hecke operators preserve the nilpotent singular support condition for automorphic sheaves in a strong sense. In particular, we show there that starting with $\cF\in \Sh_{\cN_{X}}(\Bun_{G})$ and applying the Hecke operator given by a kernel sheaf in the Satake category but at a varying point on the curve $X$, the singular support of the resulting sheaf on $X\times \Bun_{G}(X)$ is contained in the product $0_{X}\times \cN_{X}$, i.e., the result is locally constant along $X$.

In this paper we prove a family version of the above Hecke stability result. In the setup of a family of curve $\pi: \frX\to S$, let $S'$ be another stack smooth over $k$ with a map $t: S'\to S$ that lifts to $\frX$ 
\begin{equation*}
\xymatrix{ & \frX\ar[d]^{\pi}\\
S'\ar[r]^{t}\ar[ur]^{\xi} & S}
\end{equation*}
Let $\pi':\frX':=S'\times_{S}\frX\to S'$ be the base-changed curve.

\begin{theorem}\label{th:intro Hk} For any $\cK$ in the Satake category $\cH^{\Sph}$, the Hecke functor $H^{\Sph}_{\xi,\cK}$ sends $\Sh_{\cN_{\pi}}(\Bun_{G}(\pi))$ to $\Sh_{\cN_{\pi'}}(\Bun_{G}(\pi'))$.
\end{theorem}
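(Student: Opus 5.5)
The plan is to run the Hecke correspondence through the standard singular-support estimates for pullback and proper pushforward, and then check that the relevant Lagrangian correspondence carries $\cN_{\pi}$ into $\cN_{\pi'}$. The Hecke functor is defined by a correspondence
\begin{equation*}
\Bun_G(\pi') \xleftarrow{\ \oll{h}\ } \Hk_{\xi} \xrightarrow{\ \orr{h}\ } \Bun_G(\pi).
\end{equation*}
Here $\Hk_\xi$ classifies pairs of $G$-bundles on fibers of $\pi'$ with a modification at the section $\xi$. The functor is $H^{\Sph}_{\xi,\cK}(\cF)=\oll{h}_{!}(\orr{h}^{*}\cF\otimes \cK_{\Hk})$, where $\cK_{\Hk}$ is the sheaf obtained by spreading $\cK$ from the relative affine Grassmannian over $S'$.

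\textbf{Step 1: reduce to a single $\cK$.} Dévissage over Schubert strata lets us assume $\cK$ is supported on a single finite-type Schubert variety. Then $\oll h$ is proper on the support, and $\orr h$ is smooth onto $\Bun_G(\pi)\times_S S'$.

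\textbf{Step 2: apply the singular support estimates.} The singular support of $\cK_{\Hk}$ is locally constant in the $S'$-direction and contained in the conormals to Schubert strata. This holds because the Hecke stack is locally, in the smooth topology over $S'$, a product of a Schubert variety with the base, via the uniformizer at $\xi$ given by the section. Combining this with $\mathit{ss}(\orr h^*\cF)\subset \orr h^*\mathit{ss}(\cF)$ and the proper pushforward bound, we get that $\mathit{ss}(H_{\xi,\cK}\cF)$ lies in the transport of $\cN_{\pi}$ through the Lagrangian correspondence
\begin{equation*}
T^*\Bun_G(\pi') \leftarrow T^*_{\Hk}\cdots \rightarrow T^*\Bun_G(\pi),
\end{equation*}
twisted by the conormals to the Schubert strata. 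There is one extra ingredient. Pulling back along the base change $t$ requires a non-characteristic check for $\cN_\pi$ under $\Bun_G(\pi')\to\Bun_G(\pi)$, and I will deduce it from the explicit Eisenstein-cone description of $\cN_\pi$ in Theorem \ref{th:Eis cone closed}.

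\textbf{Step 3: show the transported set lies in $\cN_{\pi'}$.} This is the main obstacle. By the fiberwise bijection $p_s:(\cN_\pi)_s\to \cN_{\frX_s}$ and the single-curve result of \cite{NY}, the relative (vertical) components of the transported covectors land in $\cN^{\rel}_{\pi'}$. What must be shown is that the horizontal component, in the $S'$ direction, matches the canonical lift defining $\cN_{\pi'}$.

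I would first try a dimension argument. The transported set is conic, and by Step 2 it is isotropic: the correspondence is Lagrangian and the Schubert conormals are Lagrangian. Its image in $T^*(\Pi')$ lies in $\cN^{\rel}_{\pi'}$. The key claim is that a closed conic isotropic substack projecting into $\cN^{\rel}_{\pi'}$ must lie in $\cN_{\pi'}$. For this I would use the uniqueness statement of Proposition \ref{p:lifting Lag} together with the bijectivity part of Theorem \ref{th:Eis cone closed}: each fiber of $T^*\Bun_G(\pi')\to T^*(\Pi')$ over a point of $\cN^{\rel}_{\pi'}$ is a torsor under $T^*S'$. An isotropic family meeting these fibers in more than the canonical lift would produce a component of dimension too large, or a non-isotropic one.

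If the dimension argument fails, I would fall back on the Eisenstein description. I would check directly that Hecke modifications at $\xi$ commute with the $B$-side correspondence. Concretely, Hecke operators for $G$ restrict along $\Bun_B$ to $T$-Hecke operators, which preserve the zero section of $T^*\Bun_B(\pi')$ up to translation. This would show that the Eisenstein cone is carried to the Eisenstein cone.
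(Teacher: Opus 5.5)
\textbf{There is a genuine gap in Step 3.} The claim you call key is false: a closed conic isotropic (even Lagrangian) subset of $T^*\Bun_G(\pi')$ whose image in $T^*(\Pi')$ lies in $\cN^{\rel}_{\pi'}$ need not lie in $\cN_{\pi'}$. Here $\Pi':\Bun_G(\pi')\to S'$.

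A counterexample: fix $s'\in S'$ with $\dim S'>0$, let $i_{s'}:\Bun_G(\frX'_{s'})\incl \Bun_G(\pi')$ be the fiber inclusion, and set $\L_{s'}=\orr{i_{s'}}(\cN_{\frX'_{s'}})$. This is the set of all covectors at points over $s'$ whose restriction to the fiber is nilpotent.
\begin{itemize}
\item It is closed and conic, and it is isotropic by Lemma \ref{l:pres iso}.
\item It has dimension $\dim\Bun_G(\frX'_{s'})+\dim S'=\dim\Bun_G(\pi')$.
\item Its image in $T^*(\Pi')$ is $\cN_{\frX'_{s'}}\subset\cN^{\rel}_{\pi'}$.
\item It contains $d\Pi'(T^*_{s'}S')$ over the whole fiber, so it is not contained in $\cN_{\pi'}$, which is non-characteristic by Corollary \ref{c:univ cone non-char}.
\end{itemize}
It is essentially the singular support of $i_{s'*}\cF$ for nilpotent $\cF$. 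So your dimension count fails: a Lagrangian can contain entire $T^*_{s'}S'$-torsor fibers as long as it sits over a positive-codimension part of the base. Neither the uniqueness in Proposition \ref{p:lifting Lag} nor the bijectivity in Theorem \ref{th:Eis cone closed} rules this out.

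\textbf{The missing ingredient is non-characteristicity of the transported set with respect to the projection to the base.} This is what the paper uses.
\begin{itemize}
\item $\cN_\pi$ is non-characteristic.
\item The Hecke legs are maps over the base that are smooth on each relative-position stratum, so the transports preserve non-characteristicity. You should check this stratum by stratum, including for the tensor product with $\inv^*\cK$.
\item Hence the top-dimensional part $\L'$ of the transported isotropic set is a non-characteristic conic Lagrangian.
\item Since singular support is coisotropic, it is Lagrangian inside an isotropic set. So it suffices to show $\L'\subset\cN_{\pi'}$, not the whole transported set; lower-dimensional pieces are irrelevant.
\item Proposition \ref{p:contained in lifting} then gives $\L'\subset\cN_{\pi'}$ from the relative inclusion.
\end{itemize}

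The relative inclusion does not need \cite{NY}. A Hecke modification does not change the Higgs field at the generic point of the fiber curve, so nilpotence is preserved.

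Your fallback via $T$-Hecke operators is not a valid substitute. A modification of a $G$-bundle changes the saturated $B$-reduction by a type governed by intersections of semi-infinite orbits with Schubert cells. The induced correspondence on $\Bun_B$ is not smooth, so there is no reason it preserves the zero section.
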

For the precise definition of the Hecke functor $H^{\Sph}_{\xi, \cK}$ we refer to \S\ref{ss:Hk}. 

Theorem \ref{th:intro Hk} recovers \cite[Theorem 1.2.1]{NY} by taking $S=\pt$, $\frX=X$ the curve in question, and $S'=X$ with $\xi=\id$. We emphasize however that the proof we give here is very different from the one in {\em loc.cit}: there our proof is of computational nature while here it is a ``pure-thought'' proof by putting our statement in the appropriate generality. 

Both main theorems above are proved more generally for the version of $\Bun_{G}$ with either Iwahori level structure ($B$-reduction) or the unipotent-Iwahori level structure ($N$-reduction) along a multi-section of the family $\pi$. In fact Theorem \ref{th:intro Hk} is deduced from its Iwahori analog.

\subsection{Acknowledgments} The authors would like to thank Joakim F\ae rgeman for inspiring discussions that led to the uniqueness of the lifting of relative Lagrangians, and Marius Kj\ae rsgaard for pointing out an error in an earlier draft. We also thank the referee for helpful suggestions.

DN was partially supported by NSF DMS grant  \#2401178. ZY was partially supported by the Packard Fellowship and the Simons Investigator grant.

\section{Generalities on conic Lagrangians}

Let $k$ be an algebraically closed field of characteristic zero. 

\subsection{Conic Lagrangians for schemes}\label{ss:Lag}

 Let $M$ be a scheme that is equidimensional and smooth over $k$ (in particular $M$ is locally of finite type over $k$). Let $\pr_M: T^*M\to M$ be the projection.  

We call a subset $\L\subset T^{*}M$ constructible if for any open substack $U\subset M$ that is of finite type over $k$, the intersection $\L\cap T^{*}U$ is a constructible subset of $T^{*}U$. A constructible subset $\L\subset T^{*}M$ is {\em isotropic} if the canonical symplectic form $\om$ on $T^*M$ restricts to zero on the smooth locus of $\ov \L$ with the reduced structure. A reduced locally closed subscheme $\L\subset T^{*}M$ is a {\em Lagrangian} if it is isotropic and equidimensional with the same dimension as $M$. Equivalently, $\L$ is a Lagrangian if at every smooth point $\xi\in \L$, $T_\xi\L$ is a Lagrangian subspace of $T_\xi(T^*M)$.


A subscheme $\L\subset T^*M$ is conic if it is stable under the $\Gm$-action on $T^*M$ scaling cotangent fibers. We then have the notion of conic Lagrangians in $T^*M$.

For a smooth subscheme $Z\subset M$, let $T^*_ZM\subset T^*M$ denote the total space of the conormal bundle of $Z$ in $M$.

\begin{defn}\label{def:shadow}
    Let $\L\subset T^*M$ be a Lagrangian with the set of irreducible components denoted by $I$, and generic point $\y_\a$ for the irreducible component $\L_\a$ corresponding to $\a\in I$. The {\em shadow} of $\L$ is the set of (not necessarily closed) points $\{\z_\a=\pr_M(\y_\a)\}_{\a\in I}$ of $M$.
\end{defn}

\begin{lemma}\label{l:conic Lag}
    Let $\L\subset T^*M$ be a conic Lagrangian with shadow $\{\z_\a\}_{\a\in I}$. For each $\a\in I$, let $Z_\a\subset M$ be any smooth irreducible subscheme with generic point $\z_\a$. Then 
    \begin{equation}\label{isotropic conormal}
        \L\subset\ov{\bigcup_{\a\in I}T^*_{Z_\a}M}.
    \end{equation}
    If moreover $\L$ is closed, then we have an equality
    \begin{equation}\label{Lag conormal}
        \L=\ov{\bigcup_{\a\in I}T^*_{Z_\a}M}.
    \end{equation}
    In particular, a conic Lagrangian $\L\subset T^*M$ is determined by its shadow.
\end{lemma}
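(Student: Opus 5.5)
The plan is to reduce to a single irreducible component and use the standard fact that a conic Lagrangian is, generically, the conormal bundle of its image in $M$. Fix $\a\in I$ and let $\L_\a$ be the corresponding irreducible component, with generic point $\y_\a$. Set $W_\a=\ov{\pr_M(\L_\a)}$; its generic point is $\z_\a$. Since $k$ has characteristic zero, generic smoothness applies to the dominant map $\pr_M:\L_\a^{\mathrm{sm}}\to W_\a^{\mathrm{sm}}$. So there is a dense open $\L_\a^\circ\subset\L_\a$, which we may take conic, on which $\L_\a$ is smooth, $\pr_M(\L_\a^\circ)$ lands in a smooth open $W_\a^\circ\subset W_\a$, and $\pr_M|_{\L_\a^\circ}$ is smooth onto its image.

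The key step is to show $\L_\a^\circ\subset T^*_{W_\a^\circ}M$. This is the classical argument. At a point $\xi=(m,\ph)\in\L_\a^\circ$, the canonical one-form $\theta$ restricts to zero on $\L_\a$. Indeed, $\L_\a$ is conic, so the Euler vector field $e$ is tangent to it, and $\theta=\iota_e\om$; since $\om|_{\L_\a}=0$ by the isotropic condition, $\theta|_{\L_\a}=0$. By definition $\theta_\xi(v)=\ph(d\pr_M(v))$, and $d\pr_M(T_\xi\L_\a)=T_mW_\a^\circ$ by smoothness of the projection. Hence $\ph$ kills $T_mW_\a^\circ$, so $\xi\in T^*_{W_\a^\circ}M$. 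Both sides are irreducible of dimension $\dim M$: $\L_\a$ by the Lagrangian equidimensionality, and the conormal bundle by the standard dimension count. Therefore $\ov{\L_\a}=\ov{T^*_{W_\a^\circ}M}$.

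Next we pass to an arbitrary $Z_\a$. If $Z_\a$ is smooth irreducible with generic point $\z_\a$, then $Z_\a$ and $W_\a^\circ$ agree on a dense open subset $V$ of $W_\a$, with $V$ open in both. Over $V$ the conormal bundles coincide, and since conormal bundles of smooth irreducible subschemes are irreducible, $\ov{T^*_{Z_\a}M}=\ov{T^*_VM}=\ov{T^*_{W_\a^\circ}M}=\ov{\L_\a}$.

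Finally we assemble the components. Taking the union over $\a$ gives $\L\subset\bigcup_\a\ov{\L_\a}=\ov{\bigcup_\a T^*_{Z_\a}M}$, which is \eqref{isotropic conormal}. A mild local-finiteness remark is needed: $I$ may be infinite when $M$ is not of finite type, but the union of closures is still closed because it is locally finite over finite-type opens. If $\L$ is closed, then $\ov{\L_\a}\subset\L$, which gives equality \eqref{Lag conormal}. The right-hand side depends only on $\{\z_\a\}$, so $\L$ is determined by its shadow.

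The main obstacle is the key step, that a conic isotropic component lies generically in the conormal bundle of its image. This requires generic smoothness of $\pr_M|_{\L_\a}$ (which is where characteristic zero is used) together with the identity $\theta=\iota_e\om$.
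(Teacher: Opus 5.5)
Your proposal is correct and is essentially the paper's argument. You reduce to an irreducible component, take the dense open locus where the projection to a smooth model of the shadow is smooth, and use conicity plus isotropy to land in the conormal bundle. Your identity $\theta=\iota_e\om$ is the same as the paper's observation that the vertical vector $\xi$ lies in $\ker(d\pr_M|_{T_\xi\L'})\subset (T_xZ)^{\perp}$.

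There are only cosmetic differences. The paper works directly with the given $Z_\a$ instead of first with $W_\a^\circ$ and then comparing the two. Your local-finiteness remark is not needed, since closedness of $\L$ already gives $\bigcup_\a T^*_{Z_\a}M\subset\L$ and hence the reverse inclusion.
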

\begin{proof} For both statements, it suffices to treat the case where $\L$ is irreducible, in which case the shadow of $\L$ consists of a single point $\z\in M$. Let $Z$ be a smooth irreducible subscheme of $M$ with generic point $\z$. Let $\L'\subset \pr_M^{-1}(Z)\cap \L$ be the open dense locus where the projection map to $Z$ is smooth. In particular, $\L'$ is smooth. For \eqref{isotropic conormal}, it suffices to show that $\L'\subset T^{*}_{Z}M$.

Let $\xi\in \L'$ be a closed point and $x=\pr_M(\xi)\in Z$. The cotangent fiber $T^*_xM$ is identified with the kernel of $d_{\xi}\pr_{M}: T_\xi(T^*M)\to T^*_xM$. Restricting $d_{\xi}\pr_{M}$ to $T_{\xi}\L'$ we get a surjection $\pi: T_{\xi}\L'\surj T_{x}Z$. Since $\L'$ is isotropic, we must have $\ker(\pi)\subset (T_{x}Z)^{\bot}=(T^{*}_{Z}M)|_{x}$. On the other hand, since $\L'$ is conic, the punctured line through $\xi$ in $T^*_xM$ is also contained in $\L'$, hence $\ker(\pi)$ contains the vector $\xi$. These facts imply that $\xi\in (T^{*}_{Z}M)|_{x}$. This being true for all closed points $\xi$ of $\L'$, we conclude that $\L'\subset T^{*}_{Z}M$.


When $\L$ is a closed conic Lagrangian, the containment $\L'\subset T^{*}_{Z}M$ implies $\L=\ov{\L'}\subset \ov{T^*_ZM}$, which has to be an equality for dimension reasons.  This proves \eqref{Lag conormal}.
\end{proof}


\subsection{Lifting relative Lagrangians}
Let $\pi:M\to S$ be a smooth morphism between schemes that are smooth equidimensional over $k$. Let $T^*(\pi)$ be the total space of the relative cotangent bundle of $\pi$. We have a surjective bundle map $p: T^*M\to T^*(\pi)$. 

A locally closed reduced subscheme $\L^{\rel}\subset T^*(\pi)$ is called a {\em relative Lagrangian} if the following two conditions are satisfied:
\begin{itemize}
        \item For each geometric point $s\in S$, the reduced structure of $\L^{\rel}_{s}:=\L^{\rel}\cap T^*(M_s)$ is a Lagrangian in $T^*(M_s)$.
        
        \item $\L^{\rel}$ is equidimensional with the same dimension as $M$.
\end{itemize}

We may then talk about closed conic relative Lagrangians in $T^*(\pi)$. For a relative Lagrangian $\L^{\rel}\subset T^*(\pi)$, there is an obvious notion of shadow: it is the collection of points in $M$ that are images of generic points of $\L^{\rel}$. We have a relative version of Lemma \ref{l:conic Lag}.

\begin{lemma}\label{l:rel conic Lag}
    Let $\L^{\rel}\subset T^*(\pi)$ be a closed conic relative Lagrangian with shadow $\{\z_\a\}_{\a\in I}$. The following holds.
    \begin{enumerate}
        \item For each $\a\in I$, the image of $\z_\a$ in $S$ is a generic point of $S$. 
        \item For each $\a\in I$, let $Z_\a\subset M$ be any smooth irreducible subscheme with generic point $\z_\a$ such that $\pi_\a: Z_\a\to S$ is also smooth (such $Z_\a$ exists by part (1)). Then \begin{equation}\label{rel Lag conormal}
        \L^{\rel}=\ov{\bigcup_{\a\in I}T^*_{\pi_\a}(\pi)}.
        \end{equation}
        Here $T^*_{\pi_\a}(\pi)$ is the relative conormal bundle of $Z_\a$ in $M$ with respect to the maps to $S$: it is the fiberwise kernel of $T^*(\pi)|_{Z_\a}\to T^*(\pi_\a)$. In particular, a closed conic relative Lagrangian is determined by its shadow.
    \end{enumerate} 
\end{lemma}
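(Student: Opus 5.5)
Part (1) comes from a dimension count, and part (2) from applying Lemma \ref{l:conic Lag} fiberwise and then closing up.

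\textbf{Part (1).} We may assume $\L^{\rel}$ is irreducible, with generic point $\y$ and shadow $\z=\pr_M(\y)$. Let $s_0$ be the image of $\z$ in $S$, and let $T=\ov{\{s_0\}}\subset S$. Then $\L^{\rel}$ lies over $T$. For a geometric point $s$ of $T$, the fiber $\L^{\rel}_s$ is contained in the Lagrangian $(\L^{\rel}_s)_{red}\subset T^*(M_s)$, so it has dimension at most $\dim M_s=\dim M-\dim S$. Since $\pi$ is smooth, $\dim M_s$ is locally constant.

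By the fiber dimension estimate,
\[
\dim \L^{\rel}\le \dim T+(\dim M-\dim S).
\]
Equidimensionality gives $\dim\L^{\rel}=\dim M$, so $\dim T\ge\dim S$. Because $S$ is equidimensional, $T$ must be an irreducible component of $S$, so $s_0$ is a generic point of $S$.

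With (1) in hand, $Z_\a$ exists: take the closure of $\z_\a$, shrink it to a smooth open piece, and use generic smoothness (characteristic zero) of its dominant map to the relevant component of $S$.

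\textbf{Part (2), setup.} Again assume $\L^{\rel}$ is irreducible, with shadow $\z$ and $Z$ as in the statement. Let $\L'\subset \L^{\rel}\cap \pr^{-1}(Z)$ be an open dense subset on which $\L'$ is smooth and the projection $\L'\to Z$ is smooth. We may also shrink $\L'$ so that $\L'\to S$ is smooth and, for each closed $s$, the fiber $\L'_s$ is open dense in an irreducible component of $(\L^{\rel}_s)_{red}$. This uses generic flatness and generic smoothness: by the dimension count in (1), generic fibers have dimension exactly $\dim M_s$, so they are unions of components of the fiber Lagrangian.

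\textbf{Part (2), fiberwise containment.} Fix a closed point $s$. The projection $\L'_s\to Z_s$ is smooth, and $Z_s$ is smooth because $\pi_\a$ is smooth. Now run the argument of Lemma \ref{l:conic Lag} in $T^*(M_s)$ at a closed point $\xi\in\L'_s$ over $x\in Z_s$.

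The kernel of $T_\xi\L'_s\to T_xZ_s$ is isotropic against all of $T_\xi\L'_s$, because $\L'_s$ is an open subset of the smooth locus of the Lagrangian $(\L^{\rel}_s)_{red}$. The image of this map is all of $T_xZ_s$. So the kernel pairs to zero with $T_xZ_s$ and lies in $(T_xZ_s)^\perp$. Conicity puts $\xi$ in the kernel. Hence $\xi$ lies in the fiber at $x$ of the conormal bundle of $Z_s$ in $M_s$. That fiber is exactly $T^*_{\pi_\a}(\pi)|_x$.

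\textbf{Part (2), conclusion.} This gives $\L'\subset T^*_{\pi}(\pi)$ (the relative conormal for $\pi_\a$). Since $\L^{\rel}$ is closed, $\L^{\rel}=\ov{\L'}\subset\ov{T^*_{\pi_\a}(\pi)}$.

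For the reverse inclusion, $T^*_{\pi_\a}(\pi)$ is a vector bundle of rank $\dim M_s-\dim Z_s$ over $Z$. Its total dimension is therefore
\[
\dim Z+\dim M-\dim S-(\dim Z-\dim S)=\dim M.
\]
It is also irreducible, since $Z$ is. So the inclusion of the irreducible closed set $\L^{\rel}$, of the same dimension, into $\ov{T^*_{\pi_\a}(\pi)}$ is an equality. Taking the union over components $\a\in I$ gives \eqref{rel Lag conormal}, and so $\L^{\rel}$ is determined by its shadow.

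\textbf{Main obstacle.} The delicate step is the shrinking in the setup of part (2): arranging that the fibers $\L'_s$ sit inside the smooth locus of the fiber Lagrangians. The plan is to restrict to $s$ in a dense open of $S$ and then take closures. This is legitimate because the dense open $\L'$ already determines the closed set $\L^{\rel}$, so the containment only needs to be checked over that open.
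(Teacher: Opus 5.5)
Your proof is correct. Part (1) is the same dimension count as in the paper. Part (2) takes a different route from the paper.

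\textbf{How the two routes for part (2) differ.}
\begin{itemize}
\item The paper passes to a geometric generic point $\ov\s$ of $S$. It applies Lemma \ref{l:conic Lag} as a black box to the closed conic Lagrangian $\L^{\rel}_{\ov\s}\subset T^*(M_{\ov\s})$, obtaining $\L^{\rel}_{\ov\s}=\ov{T^*_{Z_{\ov\s}}M_{\ov\s}}$. It then descends to $\s$ and takes closures in $T^*(\pi)$. This needs no dimension count for the relative conormal bundle. It does tacitly use Lemma \ref{l:conic Lag} over $\overline{k(\s)}$, and it must match the components of $\L^{\rel}_{\ov\s}$ with those of the possibly reducible $Z_{\ov\s}$.
\item You stay at closed points and rerun the tangent-space argument of Lemma \ref{l:conic Lag} in the family $\L'\to Z\to S$. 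This gives $\L'\subset T^*_{\pi_\a}(\pi)$, and you upgrade to equality using $\dim T^*_{\pi_\a}(\pi)=\dim M$. This avoids the generic-fiber bookkeeping and makes the conormal computation explicit.
\end{itemize}
The paper also first checks that each irreducible component of $\L^{\rel}$ is itself a relative Lagrangian. You instead work with one component while using the fiber Lagrangians of the whole $\L^{\rel}$, which is equally valid.

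\textbf{The step you flagged.} This step needs no shrinking of $S$. However, the justification you state is not automatic. $\L'_s$ need not lie in the smooth locus of $(\L^{\rel}_s)_{red}$, because other components of the fiber can pass through points of $\L'_s$. Here is a direct fix:
\begin{itemize}
\item Since $\L'\to Z$ and $Z\to S$ are smooth, every nonempty $\L'_s$ is smooth of pure dimension $\dim M_s$.
\item Hence $\L'_s$ is a union of open subsets of irreducible components of $(\L^{\rel}_s)_{red}$. These need not be dense in a single component, contrary to what you wrote.
\item So $\om|_{\L'_s}$ vanishes on the dense open subset where $\L'_s$ meets the smooth locus of $(\L^{\rel}_s)_{red}$. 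It therefore vanishes identically on the smooth variety $\L'_s$.
\end{itemize}
Isotropy of $T_\xi\L'_s$ is all your tangent-space argument actually uses, so this closes the step for every closed point $s$.
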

\begin{proof}
    We claim that each irreducible component $L\subset \L^{\rel}$ is also a relative Lagrangian (certainly closed and conic). Indeed, the second condition on $\L^{\rel}$ implies $\dim L=\dim M$. The first condition for $\L^{\rel}$ implies that $L_s=L\cap T^*(M_s)$ is isotropic in $T^*(M_s)$ for all geometric points $s\in S$. Now Now $L_s$  is the fiber of $L \to S$ over $s$, hence its dimension is $\ge \dim L-\dim S=\dim M-\dim S=\dim M_s$. This forces the reduced structure of $L_s$ to be a Lagrangian in $T^*(M_s)$. Therefore $L$ is also a relative Lagrangian. Therefore we may replace $\L^{\rel}$ by $L$ and assume that $\L^{\rel}$ is an irreducible closed conic relative Lagrangian.


Replacing $S$ with the  connected component containing the image of $\L^{\rel}$, we may assume $S$ is connected hence irreducible. 

To prove (1), we claim that $\L^{\rel}\to S$ is dominant to a component of $S$. Suppose not, the fiber of $\L^{\rel}\to S$ over $s$, which is $\L^{\rel}_{s}$, would have dimension larger than $\dim M-\dim S=\dim M_s$, contradicting the assumption that $\L^{\rel}_{s}$ is isotropic. 

(2) Let $\z\in M$ be the shadow of $\L^{\rel}$, which maps to the generic point $\s$ of $S$. Therefore there exists a smooth subscheme $Z\subset M$ with generic point $\z$ such that $\pi_Z: Z\to S$ is also smooth. 

Let $\ov\s$ be a geometric generic point of $S$, and $\z_{\ov\s}$ be the generic point of $Z_{\ov\s}=\pi_Z^{-1}(\ov\s)
$. By assumption, $\L^{\rel}_{\ov\s}$ is a closed conic Lagrangian of $T^*(M_{\ov\s})$, therefore $\L^{\rel}_{\ov\s}=\ov{T^*_{Z_{\ov\s}}M_{\ov\s}}$ by Lemma \ref{l:conic Lag}. This implies $\L^{\rel}_{\s}=\ov{T^*_{Z_\s}M_{\s}}$. We have
\begin{equation*}
    \L^{\rel}=\ov{\L^{\rel}_{ \s}}=\ov{T^*_{Z_\s}M_{\s}}=\ov{T^*_{\pi_Z}(\pi)},
\end{equation*}
with closures all taken in $T^*(\pi)$. 
\end{proof}

The following statement concerns the existence and uniqueness of liftings of a relative Lagrangian to the cotangent bundle of $M$.

\begin{prop}\label{p:lifting Lag}
    Let $\L^{\rel}\subset T^*(\pi)$ be a closed conic relative Lagrangian. Then
    \begin{enumerate}
        \item There exists a unique closed conic Lagrangian $\L\subset T^*M$ with the following properties: $p(\L)\subset \L^\rel$ and is a dense there, and every irreducible component of $\L$ dominates an irreducible component of $\L^{\rel}$. We call $\L$ {\em the lifting} of $\L^{\rel}$.
        \item For the lifting $\L$ of $\L^{\rel}$, the map $p|_{\L}: \L\to \L^{\rel}$ is an isomorphism over an open dense subset of $\L^{\rel}$.
        \item The shadow of $\L^{\rel}$ and its lifting $\L$ are the same, and each point of the shadow maps to a generic point of $S$. 
    \end{enumerate}  
\end{prop}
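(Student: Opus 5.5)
The plan is to reduce to the irreducible case and write down the lifting explicitly as a conormal bundle. By Lemma \ref{l:rel conic Lag}, each irreducible component of $\L^{\rel}$ is itself a closed conic relative Lagrangian. Uniqueness is then easy to organize componentwise: every component of $\L$ dominates some component of $\L^{\rel}$, and $p(\L)$ is dense in $\L^{\rel}$. So it suffices to treat an irreducible $\L^{\rel}$ with shadow $\z$. By Lemma \ref{l:rel conic Lag}(1), $\z$ maps to a generic point of $S$, so we may choose a smooth irreducible $Z\subset M$ with generic point $\z$ and $\pi_Z:Z\to S$ smooth. Lemma \ref{l:rel conic Lag}(2) then gives
\[
\L^{\rel}=\ov{T^*_{\pi_Z}(\pi)}.
\]

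\textbf{Existence.} Define $\L:=\ov{T^*_ZM}$, the closure of the absolute conormal bundle. This is a closed irreducible conic Lagrangian. I will compare $T^*_ZM$ with $T^*_{\pi_Z}(\pi)$ fiberwise over a point $x\in Z$ with image $s\in S$. Write $V=T^*_xM$ and $W=T^*_s S$, so that $W\subset V$ is the kernel of $p_x:V\to T^*_x(\pi)$. The conormal fiber is $(T_xZ)^\perp\subset V$. Because $\pi_Z$ is smooth, $T_xZ\to T_sS$ is surjective, and therefore $(T_xZ)^\perp\cap W=0$. Hence $p_x$ is injective on $(T_xZ)^\perp$. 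Its image is contained in the relative conormal $(T_xZ_s)^\perp\subset T^*_x(\pi)$. A dimension count shows equality: $\dim M-\dim Z$ on one side, and $\dim M_s-\dim Z_s$ on the other, and these agree since both $\pi$ and $\pi_Z$ are smooth. Consequently $p$ restricts to an isomorphism of vector bundles $T^*_ZM\isom T^*_{\pi_Z}(\pi)$ over $Z$. This gives $p(\L)\subset \ov{p(T^*_ZM)}=\L^{\rel}$ with dense image, which proves existence and also part (2). Part (3) is immediate, since both shadows are $\z$, and the statement about $S$ is Lemma \ref{l:rel conic Lag}(1).

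\textbf{Uniqueness.} This is the main point. Let $\L'$ be any closed conic Lagrangian with the stated properties, and take an irreducible component $L'$. It dominates a component of $\L^{\rel}$; in the irreducible case it dominates $\L^{\rel}$ itself. The shadow $\z'$ of $L'$ maps onto the closure of $\z$, but a priori $\z'$ could be a more general point than $\z$. I will rule this out using the Lagrangian property. By Lemma \ref{l:conic Lag}, $L'=\ov{T^*_{Z'}M}$ with $Z'$ smooth, and after shrinking we may assume $\pi|_{Z'}$ has constant rank. Over a generic point of $Z'$, the image $p(T^*_{Z'}M)$ is contained in the relative conormal of the fibers $Z'_s$, which has dimension $\dim M_s-\dim Z'_s$. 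Dominance forces $\pr_M(p(L'))$ to be dense in $Z$, and the fibers must match: the image under $p$ of the fiber over a point is contained in a space of dimension $\dim M_s - \dim Z'_s$, yet must dominate the fibers of $T^*_{\pi_Z}(\pi)$, of dimension $\dim M_s-\dim Z_s$. Since $Z\subset \ov{Z'}$, this gives $\dim Z'_s\le \dim Z_s$, which in turn forces $\overline{Z'}= Z$ generically. Therefore $\z'=\z$, and Lemma \ref{l:conic Lag} (a conic Lagrangian is determined by its shadow) gives $L'=\L$.

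The step I expect to be delicate is exactly this dimension comparison: making precise that $\pr_M(L')$ dense in $\pr_M(\L^{\rel})$ together with the fiber dimension bound pins down the shadow. I would handle it by working over the generic point of $Z$ and using the fact that both the image and the target of $p|_{L'}$ are irreducible of dimension $\dim M$.
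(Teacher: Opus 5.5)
Your proposal is correct and is essentially the paper's proof. Existence is by $\ov{\bigcup_\a T^*_{Z_\a}M}$, using that $p$ maps $T^*_{Z_\a}M$ isomorphically onto $T^*_{\pi_\a}(\pi)$; you supply the linear algebra the paper calls clear. Uniqueness is by equality of shadows together with Lemma~\ref{l:conic Lag}.

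The step you flag as delicate is in fact immediate, and your framing of it is backwards. Since $p$ is a bundle map over $M$, you have $\pr_M(L')=\pr_M(p(L'))\subset\pr_M(\L^{\rel})\subset\ov{\{\z\}}$. So a priori $\z'$ could only be a specialization of $\z$, never a more general point. Dominance then sends the generic point of $L'$ to the generic point of $\L^{\rel}$, which gives $\z'=\z$ directly. Your own remark that $\pr_M(p(L'))$ is dense in $Z$ already says exactly this. You should therefore drop the fiber-dimension comparison: its claim that $p$ of a fiber must ``dominate the fibers'' of $T^*_{\pi_Z}(\pi)$ is not justified as written, and it is not needed.

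For part (2), add one small check. Once $Z$ is closed in an open $U\subset M$, one has $\L\cap\pr_M^{-1}(Z)=T^*_ZM$. Hence $T^*_{\pi_Z}(\pi)$ is open in $\L^{\rel}$ and its full preimage in $\L$ is $T^*_ZM$; this is what the paper's Cartesian square records.
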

\begin{proof}
(1) Uniqueness: if $\L$ and $\L'$ are closed conic Lagrangians in $T^*M$ both satisfying the said properties, then they have the same shadow, which is the shadow of $\L^{\rel}$. Therefore $\L=\L'$ by Lemma \ref{l:conic Lag}.

Existence: let $\{\z_\a\}_{\a\in I}$ be the shadow of $\L^{\rel}$, and let $Z_\a$ be as in Lemma \ref{l:rel conic Lag}(2). Take $\L=\ov{\cup_{\a\in I}T^*_{Z_\a}M}$, a closed conic Lagrangian in $T^*M$. We claim that $p(\L)\subset \L^{\rel}$ and is dense therein. This is clear since $p(T^*_{Z_\a}M)= T^*_{\pi_\a}(\pi)$, and union of the latter is dense in $\L^{\rel}$ by \eqref{rel Lag conormal}.

(2) We use notation from Lemma \ref{l:rel conic Lag}(2). By shrinking $Z_\a$, we may further assume $Z_\a$ are disjoint from each other. From the construction of the lifting $\L$, we have a Cartesian diagram
\begin{equation*}
    \xymatrix{\coprod_{\a\in I}T^*_{Z_\a}M \ar@{^{(}->}[r]\ar[d]_{\coprod p_\a} & \L\ar[d]^{p|_\L}\\
    \coprod_{\a\in I}T^*_{\pi_\a}(\pi) \ar@{^{(}->}[r] & \L^{\rel}}
\end{equation*}
It is clear that $p_\a$ is an isomorphism for each $\a$. This implies $p|_{\L}:\L\to \L^{\rel}$ is an isomorphism over the open dense subset $\coprod_{\a\in I}T^*_{\pi_\a}(\pi)$.

(3) follows from the uniqueness part of the proof of (1) and Lemma \ref{l:rel conic Lag}(1).
\end{proof}

\begin{exam}
   We illustrate that the map $p|_{\L}: \L\to \L^{\rel}$ may not be injective nor surjective in general. Let $Z\subset M$ be a closed smooth subscheme such that the map $\pi_Z=\pi|_Z: Z\to S$ is quasi-finite but ramified. Let $\L^{\rel}=T^*(\pi)|_{Z}\subset T^*(\pi)$. Fiberwise $\L^{\rel}_{ s}$ is the union of cotangent fibers of $M_s$ at $Z_s$ (which is a finite number of points). Therefore $\L^{\rel}$ is a closed conic relative Lagrangian.   It is clear that $\L=T^*_ZM$ is the lifting of $\L^{\rel}$. The map $\L\to \L^{\rel}$ over $z\in Z$ (with image $s\in S$) is the composition of linear maps $(T^*_ZM)_z\subset T^*_zM\surj T^*_z(M_s)$, which when $z$ is a ramified point for $\pi_Z$ is neither injective nor surjective. 
\end{exam}

Recall that a subset $\L\subset T^*M$ is {\em non-characteristic} with respect to $\pi$ if $\L\cap (T^*S\times_SM)$ is contained in the zero section $0_M$. We say $\L$ is {\em generically non-characteristic} with respect to $\pi$ if it contains an open dense subset that is non-characteristic with respect to $\pi$.

The following proposition gives an upper bound for certain Lagrangians in $T^*M$ by in terms of its image in the relative cotangent bundle. 
 
\begin{prop}\label{p:contained in lifting} Let $\L^{\rel}\subset T^*(\pi)$ be a closed conic relative Lagrangian. Let $\L'\subset T^*M$ be a conic Lagrangian that is generically non-characteristic with respect to $\pi$. Suppose $p(\L')\subset \L^\rel$, then $\L'$ is contained in the lifting of $\L^\rel$. 
\end{prop}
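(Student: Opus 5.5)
The plan is to reduce to a single irreducible component of $\L'$, identify that component with a conormal closure via Lemma \ref{l:conic Lag}, and show that its shadow point already belongs to the shadow of $\L^{\rel}$; then Proposition \ref{p:lifting Lag}(3) and the description \eqref{Lag conormal} of the lifting finish the argument.

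\emph{Reduction.} It suffices to show that each irreducible component $L'$ of $\L'$ lies in the lifting $\L$. Such an $L'$ is a conic Lagrangian with $p(L')\subset\L^{\rel}$. Since the non-characteristic open dense subset of $\L'$ meets $L'$ in a dense open subset, $L'$ is also generically non-characteristic. Let $\z$ be its shadow point, and choose a smooth irreducible $Z\subset M$ with generic point $\z$. By \eqref{isotropic conormal} we have $L'\subset\ov{T^*_ZM}$. Both sides are irreducible of dimension $\dim M$, so $\ov{L'}=\ov{T^*_ZM}$. Consequently the generic points agree, and the dense non-characteristic open part of $L'$ meets $T^*_ZM$ in a dense open subset $V$.

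\emph{Using the non-characteristic condition.} At $z\in Z$ with $s=\pi(z)$, the intersection of $(T^*_ZM)_z=(T_zZ)^\perp$ with the image of $T^*_sS$ consists of covectors pulled back from $S$ that kill $d\pi(T_zZ)$. Points of $V$ lying over a dense set of $z$ are non-characteristic, and $V$ is conic and fills out the generic fibers of $T^*_ZM\to Z$. Hence this intersection is $0$ for generic $z$, which means $d\pi_Z:T_zZ\to T_sS$ is surjective for generic $z$. After shrinking $Z$ we may therefore assume $\pi_Z:Z\to S$ is smooth. In particular $\z$ maps to a generic point of $S$.

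\emph{Comparison with $\L^{\rel}$.} Once $\pi_Z$ is smooth, $\ker p\cap T^*_ZM=0$, because covectors pulled back from $S$ that kill $T_zZ$ vanish. Also $p(T^*_ZM)=T^*_{\pi_Z}(\pi)$, so $p$ maps $T^*_ZM$ isomorphically onto $T^*_{\pi_Z}(\pi)$, which is irreducible of dimension $\dim M$. Since $L'$ is dense in $\ov{T^*_ZM}$ and $\L^{\rel}$ is closed, the hypothesis $p(L')\subset\L^{\rel}$ gives $p(T^*_ZM)\subset\L^{\rel}$. The set $\L^{\rel}$ is equidimensional of dimension $\dim M$, so $\ov{T^*_{\pi_Z}(\pi)}$ is an irreducible component of $\L^{\rel}$, and its generic point projects to $\z$. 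Thus $\z$ lies in the shadow of $\L^{\rel}$, which by Proposition \ref{p:lifting Lag}(3) is the shadow of $\L$. By \eqref{Lag conormal} applied to the closed conic Lagrangian $\L$ with $Z_\a=Z$, we get $\L\supset\ov{T^*_ZM}\supset L'$.

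\emph{Main obstacle.} The delicate step is the second one: passing from "generically non-characteristic" on $L'$ to smoothness of $\pi_Z$ on an open part of $Z$. This requires checking that the non-characteristic dense open subset of $L'$ really controls the full conormal fiber $(T_zZ)^\perp$ at generic $z$. I expect this to follow from conicity together with the equality $\ov{L'}=\ov{T^*_ZM}$ of irreducible sets of the same dimension, which forces the generic fibers to coincide.
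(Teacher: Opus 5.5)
Your route matches the paper's: reduce to an irreducible component, take its shadow $\z$ and a smooth $Z$, use non-characteristicity to make $\pi_Z$ smooth, deduce that $\z$ lies in the shadow of $\L^{\rel}$, and conclude with \eqref{Lag conormal}. Your continuity argument for $p(T^*_ZM)\subset\L^{\rel}$ and your dimension count are fine.

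The gap is the step you flagged, and the justification you propose for it is false. A dense open conic subset of the vector bundle $T^*_ZM\to Z$ need not contain any full fiber; the complement of the zero section is an example. The equality $\ov{L'}=\ov{T^*_ZM}$ imposes no condition on fibers.

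The set you must show is zero is the characteristic part
$(T_zZ)^\perp\cap(d\pi_z)^*T^*_{\pi(z)}S=(d\pi_z)^*\bigl(d\pi_z(T_zZ)^\perp\bigr)$, where the annihilator is taken in $T^*_{\pi(z)}S$. As $z$ varies, it sweeps out a closed conic subset of $T^*_ZM$. Whenever it is a proper subset, its complement is dense, open, conic and non-characteristic. So the literal hypothesis (``contains a non-characteristic dense open subset'') gives no control over it.

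In fact the statement fails under that literal reading. Take $\pi:\mathbb{A}^2\to\mathbb{A}^1$, $(s,f)\mapsto s$, with covectors $a\,ds+b\,df$. Let $\L'=T^*_{(0,0)}\mathbb{A}^2$ and $\L^{\rel}=T^*(\pi)|_{\{f=0\}}$.
\begin{itemize}
\item $\L'$ is a closed conic Lagrangian, and $\L'\cap\{b\neq 0\}$ is dense and non-characteristic.
\item $p(\L')\subset\L^{\rel}$.
\item The lifting of $\L^{\rel}$ is the conormal $\{f=0,\ a=0\}$ of the line $f=0$, which does not contain $\L'$.
\end{itemize}
Here $\pi_Z$ is not dominant, which is exactly the conclusion your second step was supposed to rule out.

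The missing input is that the \emph{full} conormal fibers $(T_zZ)^\perp$ are non-characteristic for generic $z\in Z$. For example, it suffices that $\ov{L'}\cap T^*M^\circ$ is non-characteristic for some open $M^\circ\subset M$ containing $\z$. This holds when $\L'$ is closed and non-characteristic. That is how the proposition is actually used in the proof of Theorem~\ref{th:aff Hk pres}: there $\L'$ is a union of components of the closed, non-characteristic set $\orr{r_{2}}\oll{r_{1}}(\cN^{\Eis,B}_{\pi,\s})$.

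Under that hypothesis, $\ov{L'}\supset T^*_ZM$ gives $d\pi_z(T_zZ)^\perp=0$ for $z\in Z\cap M^\circ$, i.e.\ $d\pi_Z$ is surjective there, and the rest of your argument goes through unchanged. The paper's own proof tacitly uses the same stronger reading. After passing to a non-characteristic dense open subset, it asserts that ``by shrinking $Z$ we may assume $T^*_ZM\subset\L'$'', which is not available in general.
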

\begin{proof}
It suffices to treat the case where $\L'$ is irreducible. Replacing $\L'$ by an open dense subset, we may assume that $\L'$ is non-characteristic with respect to $\pi$. 

Let $\z\in M$ be the shadow of $\L'$, and $Z$ be a smooth subscheme of $M$ with generic point $\z$. By shrinking $Z$ we may assume $T^*_ZM\subset \L'$. Since $\L'$ is non-characteristic, $\pi_Z: Z\to S$ is dominant to a component of $S$, for otherwise, by further shrinking $Z$, we may assume $\pi(Z)\subset S'$ for a smooth subscheme $S'\subset S$ of lower dimension, which then implies that $\L'\supset (T^*_{S'}S)\times_{S'}Z$, contradicting the non-characteristic property of $\L'$. Shrink $Z$ further so that $\pi_Z$ is smooth. Then $p(\L')\supset T^*_{\pi_Z}(\pi)$. Since $p(\L')\subset \L^\rel$ by assumption, we have $T^*_{\pi_Z}(\pi)\subset \L^\rel$, and therefore $\z$ is also a shadow of $\L^\rel$. By the construction of the lifting $\L$ of $\L^\rel$, we have $\ov{T^*_ZM}\subset \L$. Since $\L'$ is irreducible by assumption, $T^*_ZM$ is dense in $\L'$, taking closure we obtain that $\L'\subset \ov{T^*_ZM}\subset \L$.
\end{proof}

\subsection{Conic Lagrangians for stacks}

Let $\frM$ be an algebraic stack that is smooth equidimensional over $k$. Let $T^*\frM$ denote the {\em classical} stack underlying the cotangent stack of $\frM$, the latter being a derived stack in general. A constructible subset $\L\subset T^*\frM$ is {\em isotropic} if for some smooth surjective map $f: M\to \frM$ from a scheme over $k$, the image of $\L\times_{\frM}M$ under the embedding
\begin{equation*}
    df:(T^*\frM)\times_{\frM}M\incl T^*M
\end{equation*}
is isotropic in $T^*M$ in the sense recalled in \S\ref{ss:Lag}. Similarly, a reduced locally closed substack $\L\subset T^*\frM$ is a {\em Lagrangian} if, under the same notation as above, the image of $\L\times_{\frM}M$ under $df$ 
is a Lagrangian in $T^*M$ in the sense recalled in \S\ref{ss:Lag}. It is easily checked that these properties are independent of the choice of smooth covering $f$ by schemes. The following criterion for Lagrangians is easily checked by going to a smooth cover by schemes:

\begin{lemma}\label{l:Lag crit}
    Let $\L\subset T^*\frM$ be a reduced locally closed substack that is isotropic. Then the following are equivalent:
    \begin{enumerate}
        \item $\L$ is a Lagrangian.
        \item $\L$ is equidimensional with the same dimension as $\frM$.
        \item $\L$ has local dimension $\ge \dim \frM$ everywhere.
    \end{enumerate}
\end{lemma}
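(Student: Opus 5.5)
We reduce everything to the scheme case of \S\ref{ss:Lag} by choosing a smooth surjective map $f: M\to \frM$ from a scheme, which we may take equidimensional of relative dimension $d$ on each connected component (work one component of $M$ at a time). Then $\dim M=\dim\frM+d$. Set $\L_M:=df(\L\times_{\frM}M)\subset T^*M$. Since $df$ is a closed embedding (injective on fibers because $f$ is smooth), $\L_M\cong \L\times_{\frM}M$, so $\L_M$ is a reduced locally closed subscheme, and the projection $\L_M\to\L$ is smooth surjective of relative dimension $d$. By definition $\L_M$ is isotropic, and $\L$ is Lagrangian exactly when $\L_M$ is Lagrangian in $T^*M$.

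For the dimension bookkeeping: because $\L_M\to \L$ is smooth of relative dimension $d$, for every point of $\L_M$ the local dimension of $\L_M$ there equals the local dimension of $\L$ at the image point plus $d$. Likewise every irreducible component of $\L_M$ maps dominantly onto an irreducible component of $\L$, and components of $\L$ are hit (surjectivity). Hence $\L$ is equidimensional of dimension $\dim\frM$ iff $\L_M$ is equidimensional of dimension $\dim M$, and $\L$ has local dimension $\ge\dim\frM$ everywhere iff $\L_M$ has local dimension $\ge \dim M$ everywhere. This reduces the lemma to the same three-way equivalence for an isotropic reduced locally closed $\L_M\subset T^*M$ with $M$ a smooth scheme.

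In the scheme case, (1)$\Leftrightarrow$(2) is the definition of Lagrangian in \S\ref{ss:Lag}, and (2)$\Rightarrow$(3) is trivial. The only substantive step is (3)$\Rightarrow$(2), for which we need the upper bound: an isotropic constructible subset has every irreducible component of dimension $\le\dim M$. We prove this by applying the argument of Lemma~\ref{l:conic Lag} to the smooth locus of each component: at a smooth point $\xi$ of the reduced closure of a component, $T_\xi$ of that component is an isotropic subspace of the $2\dim M$-dimensional symplectic space $T_\xi(T^*M)$, hence has dimension $\le \dim M$. Combined with (3), every component has dimension exactly $\dim M$, giving equidimensionality.

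The only potential subtlety, and the step to be careful about, is that local dimension $\ge \dim M$ everywhere together with the component bound $\le\dim M$ forces every component to have dimension exactly $\dim M$. This holds because each component contains points lying on no other component, where the local dimension equals the dimension of that component. Independence of the choice of $f$ is already asserted in the text, so no further argument is needed there.
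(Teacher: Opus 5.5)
Your proposal is correct and follows the paper's route: the paper only asserts that the criterion is ``easily checked by going to a smooth cover by schemes,'' and you supply the details. The only substantive input is that each component of an isotropic subset has dimension at most $\dim M$, since isotropic subspaces of the $2\dim M$-dimensional symplectic space $T_\xi(T^*M)$ have dimension at most $\dim M$; when you apply this, take $\xi$ to be a general closed point of the component, so that it is a smooth point of the whole closure $\ov{\Lambda_M}$, because the paper imposes the isotropy hypothesis only on the smooth locus of $\ov{\Lambda_M}$.
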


Note however that the dimension of the classical stack $T^*\frM$ may be larger than $2\dim \frM$ in general.

The shadow of a Lagrangian $\L\subset T^*\frM$ is defined verbatim as in Definition \ref{def:shadow}, which is a subset of the underlying topological space of $\frM$.

For a point $\z$ in $\frM$, its closure $\ov{\{\z\}}$ is a reduced closed substack of $\frM$. For an open substack $\frZ\subset \ov{\{\z\}}$ that is smooth over $k$, the conormal bundle $T^*_\frZ\frM$ is defined, and is a conic Lagrangian in $T^*\frM$. Indeed, for a smooth covering $f:M\to \frM$ by a scheme, we have $(T^*_\frZ\frM)\times_{\frM}M=T^*_{Z}M$ (where $Z=f^{-1}(\frZ)$) as subschemes of $T^*M$. With this preparation, Lemma \ref{l:conic Lag} readily generalizes to stacks $\frM$.

If now $\frS$ is another smooth stack over $k$ and $\pi:\frM\to \frS$ is a smooth map of stacks. The (classical) relative cotangent stack $T^*(\pi)$ is defined. The notion of relative Lagrangian $\L^{\rel}\subset T^*(\pi)$ has a clear extension to this situation, as well as Lemma \ref{l:rel conic Lag}. Finally, Proposition \ref{p:lifting Lag} also extends to the situation of smooth maps $\pi:\frM\to \frS$ between stacks smooth over $k$. The proofs of all these statements go by sending Lagrangians $\L\subset T^*\frM$ to $\L\times_{\frM}M\subset T^*M$ for a smooth covering $M\to \frM$ by a scheme $M$. We omit details.

\subsection{Transport of Lagrangians}

Let $\frM_1$ and $\frM_2$ be smooth  equidimensional stacks over $k$. Let $f:\frM_{1}\to \frM_{2}$ be a morphism of finite type. Consider the correspondence
\begin{equation*}
\xymatrix{ T^{*}\frM_{1} & \frM_{1}\times_{\frM_{2}}T^{*}\frM_{2}\ar[r]^-{f^{\na}}\ar[l]_-{df} & T^{*}\frM_{2}
}
\end{equation*}
For a constructible subset $\L_{1}\subset T^{*}\frM_{1}$, define a constructible subset of $T^*\frM_2$
\begin{equation*}
\orr{f}(\L_{1})=f^{\na}( df^{-1}(\L_{1}))\subset T^{*}\frM_{2}.
\end{equation*}
Similarly, for a constructible subset $\L_{2}\subset T^{*}\frM_{2}$, define a constructible subset of $T^*\frM_1$
\begin{equation*}
\oll{f}(\L_{2})=df( (f^{\na})^{-1}(\L_{2}))\subset T^{*}\frM_{1}.
\end{equation*}

\begin{lemma}\label{l:Cart Lag} Consider a Cartesian diagram where all stacks involved are smooth equidimensional over~$k$
\begin{equation*}
\xymatrix{ \frM' \ar[d]^{f'}\ar[r]^{a} & \frM\ar[d]^{f}\\
\frN'\ar[r]^{b} & \frN}
\end{equation*}
Then for any subset $\L\subset T^{*}\frM$ we have an equality of constructible subsets of $T^{*}\frN'$
\begin{equation}\label{Cart Lag eq}
\oll{b}\orr{f}(\L)=\orr{f'}\oll{a}(\L)\subset T^{*}\frN'.
\end{equation}
\end{lemma}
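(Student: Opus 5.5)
We prove \eqref{Cart Lag eq} by a pointwise chase through the fiber products, reducing everything to a linear-algebra statement about cotangent complexes of a Cartesian square. Points of either side of \eqref{Cart Lag eq} will be described as pairs (a point of a stack, a covector), and the task is to match these descriptions. Since both sides are constructible subsets and the statement is set-theoretic, it suffices to compare geometric points. Fix a geometric point $n'\in\frN'$ and a covector $\eta'\in T^*_{n'}\frN'$.

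By definition, $\eta'\in\oll{b}\orr{f}(\L)$ iff there exist $\eta\in T^*_{b(n')}\frN$ with $db^*(\eta)=\eta'$ and a point $m\in\frM$ with $f(m)=b(n')$ such that $df^*(\eta)\in\L_m$. Similarly, $\eta'\in\orr{f'}\oll{a}(\L)$ iff there exist $m'\in\frM'$ with $f'(m')=n'$ and $\xi\in T^*_{a(m')}\frM$ with $\xi\in\L$ and $da^*(\xi)=df'^*(\eta')$.

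The inclusion $\subseteq$ is direct. Given $(\eta,m)$ as above, set $m'=(n',m)$, which is a point of $\frM'=\frN'\times_{\frN}\frM$. Take $\xi=df^*(\eta)$. Functoriality, using $f\circ a=b\circ f'$, gives
\[
da^*(\xi)=da^*df^*(\eta)=df'^*db^*(\eta)=df'^*(\eta').
\]

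The reverse inclusion is the main step. Given $m'$ and $\xi$ with $\xi\in\L$ and $da^*\xi=df'^*\eta'$, we must produce $\eta\in T^*\frN$ with $df^*\eta=\xi$ and $db^*\eta=\eta'$. The tool is the Cartesian square of tangent complexes: since the square is Cartesian, $T_{m'}\frM'=T_{n'}\frN'\times_{T_n\frN}T_m\frM$ holds as complexes (homotopy fiber product). Dually, the cotangent complex satisfies $L_{\frM'}\simeq$ the pushout of $a^*L_{\frM}\leftarrow f'^*b^*L_{\frN}\to f'^*L_{\frN'}$.

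On $H^0$ of the duals (covectors of the classical cotangent stacks) this yields an exact sequence
\[
T^*_n\frN\to T^*_m\frM\oplus T^*_{n'}\frN'\to T^*_{m'}\frM',
\]
which is exactly what produces the required $\eta$. For schemes with the usual transversality this is elementary: the sequence $0\to T_{m'}\frM'\to T_m\frM\oplus T_{n'}\frN'\to T_n\frN$ is exact on tangent spaces, and dualizing gives exactness in the middle.

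For stacks we will reduce to this case by smooth covers; we regard this as the main obstacle. Exactness requires the fiber product to be the derived one. The hypothesis that all four stacks are smooth of the expected dimensions is what ensures that the classical fiber product coincides with the derived one, at least well enough that the long exact sequence in degrees $0$ and $1$ gives middle exactness at $H^0$ of the duals. We would verify this by passing to smooth atlases of $\frN$, then $\frN'$ and $\frM$. Smooth pullback preserves covectors injectively via $df$, so the statement for the atlases implies it for the stacks.
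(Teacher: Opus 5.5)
Your argument is essentially the paper's. The paper identifies the two auxiliary stacks $\frM'\times_{\frN}T^*\frN$ and $(\frM'\times_{\frN'}T^*\frN')\times_{T^*\frM'}(\frM'\times_{\frM}T^*\frM)$ that compute the two sides. That comparison comes down to the map $T^*_n\frN\to T^*_{n'}\frN'\times_{T^*_{m'}\frM'}T^*_m\frM$ being an isomorphism, which the paper gets by taking $H^0$ of the triangle $f'^*b^*\LL_{\frN}\to a^*\LL_{\frM}\oplus f'^*\LL_{\frN'}\to\LL_{\frM'}$. You correctly isolate that only its surjectivity (middle exactness) is needed, and that the other inclusion is just functoriality.

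The one thing to fix is your last paragraph. The lemma does not assume expected dimension, and smoothness of the four stacks does not make the classical fiber product agree with the derived one. For example, let $f$ and $b$ both be the inclusion of the same line into $\frN=\mathbb{A}^2$. Then $\frM'$ is that line, and $T^*_n\frN=k^2\to k=T^*_{n'}\frN'\times_{T^*_{m'}\frM'}T^*_m\frM$ is surjective but not injective.

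This is harmless for you. Tor-independence only matters for the injective half, which you never use. Middle exactness holds for every classical Cartesian square:
\begin{itemize}
\item For schemes, the Zariski tangent space of a fiber product is the fiber product of the tangent spaces, with no transversality needed.
\item For stacks, isomorphism classes of first-order deformations of $m'$ still surject onto the fiber product of those of $m$ and $n'$ over those of $n$ (choose an isomorphism between the two images in $\frN$). This dualizes to exactly the middle exactness you need.
\end{itemize}

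If you go through atlases instead, injectivity of smooth pullback is not quite enough. You must also check that the covector $\eta_N$ produced on an atlas $N\to\frN$ kills $T_n(N/\frN)$. To see this, lift a vertical vector to a vertical vector (over $\frM$) on the atlas $M$ of $\frM\times_{\frN}N$, and use that the given covector on $M$ is pulled back from $T^*\frM$.
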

\begin{proof} Let $p=f\c a=b\c f': \frM'\to \frN$. Let $Z=\frM'\times_{\frN}T^{*}\frN$. We have a natural map $\pi_{\frM}: Z\xr{a\times\id}\frM\times_{\frN}T^{*}\frN\xr{df}T^{*}\frM$. Similarly, we have a map $\pi_{\frN'}: Z\to T^{*}\frN'$. It is easy to see that
\begin{equation}\label{fa}
\orr{f'}\oll{a}(\L)=\pi_{\frN'}\pi_{\frM}^{-1}(\L).
\end{equation}
On the other hand, let $W=(\frM'\times_{\frN'}T^{*}\frN')\times_{T^{*}\frM'}(\frM'\times_{\frM}T^{*}\frM)$ (the maps to $T^{*}\frM'$ are $df'$ and $da$). Let $\nu_{\frM}: W\to T^{*}\frM$ and $\nu_{\frN'}: W\to T^{*}\frN'$ be the projections. Then one checks that
\begin{equation}\label{bf}
\oll{b}\orr{f}(\L)=\nu_{\frN'}\nu_{\frM}^{-1}(\L).
\end{equation}
Finally, we observe that the natural map $Z\to W$ (induced by $db$ and $df$) is an isomorphism, under which $\pi_{\frM}$ corresponds to $\nu_{\frM}$ and $\pi_{\frN'}$ corresponds to $\nu_{\frN'}$. Indeed this boils down to the fact that for any $x'\in \frM'$ with image $y',x$ and $y$ in $\frN', \frM$ and $\frN$ respectively, the map $(db,df): T^{*}_{y}\frN\to T^{*}_{y'}\frN'\times_{T^{*}_{x'}\frM'}T^{*}_{x}\frM$ is an isomorphism. The latter follows from the distinguished triangle of cotangent complexes $p^*\LL_{\frN}\to a^*\LL_{\frM}\op f'^*\LL_{\frN'}\to \LL_{\frM'}\to $ by taking the $0$th cohomology (all cotangent complexes are in degrees $\ge0$ by the smoothness assumption). The desired equality then follows by combining \eqref{fa} and \eqref{bf}.
\end{proof}

\begin{lemma}\label{l:pres iso} 
Let $\frM_1$ and $\frM_2$ be stacks that are smooth over $k$, and $f:\frM_1\to \frM_2$ be a morphism. Let $\L_{1}\subset T^{*}\frM_1$, $\L_{2}\subset T^{*}\frM_2$ be isotropic subsets. Then $\oll{f}(\L_{2})$ is an isotropic subset in $T^{*}\frM_1$, and $\orr{f}(\L_{1})$ is an isotropic subset in $T^{*}\frM_2$.
\end{lemma}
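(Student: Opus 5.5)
Both transports are built from the correspondence $T^*\frM_1\xleftarrow{df}\frM_1\times_{\frM_2}T^*\frM_2\xrightarrow{f^\na}T^*\frM_2$. My plan is to reduce first to schemes, and then to the standard fact that this correspondence is, up to a sign, the conormal bundle to the graph of $f$.

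\textbf{Step 1: reduction to schemes.} Isotropy is tested on smooth covers, so I choose smooth atlases $M_2\to\frM_2$ and $M_1\to \frM_1\times_{\frM_2}M_2$, the latter composed with the projection to get $M_1\to\frM_1$. This gives a commutative square whose horizontal maps are smooth and surjective and which covers $f$ by a map of schemes $g:M_1\to M_2$. For a smooth map $u:M\to\frM$ the map $du$ is a closed embedding and $\oll{u}(\L)=du(\L\times_{\frM}M)$; by definition $\L$ is isotropic if and only if this pullback is. Lemma \ref{l:Cart Lag} does not apply to the non-Cartesian square, so I check functoriality directly using the chain rule $d(g\circ v)=dv\circ dg$ and surjectivity of the covers:
\begin{equation*}
\oll{v_1}\oll{f}(\L_2)=\oll{g}\oll{v_2}(\L_2),\qquad \oll{v_2}\orr{f}(\L_1)=\orr{g}\oll{v_1}(\L_1).
\end{equation*}
The second identity is the delicate one. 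It is a base-change statement, and it holds because $M_1\to \frM_1\times_{\frM_2}M_2$ is smooth surjective. One checks it pointwise: a covector on $T^*_{x_2}M_2$ lies in the left side if and only if its restriction to $T_{x_1}M_1$ lies in $\oll{v_1}(\L_1)$. I would verify this by chasing cotangent complexes of the Cartesian square $\frM_1\times_{\frM_2}M_2$, as in the proof of Lemma \ref{l:Cart Lag}. With both identities in hand, it suffices to prove the lemma for a morphism of smooth schemes $g:M_1\to M_2$.

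\textbf{Step 2: the scheme case.} Let $\Gamma\subset M_1\times M_2$ be the graph of $g$. It is smooth, and its conormal bundle $T^*_\Gamma(M_1\times M_2)=\{(x,-dg^*\eta;\,g(x),\eta)\}$ is Lagrangian for $\om_1\oplus\om_2$. Write $\L^a$ for the image of $\L$ under the antipodal map, which is again isotropic. Then, up to signs,
\begin{equation*}
\orr{g}(\L_1)=\pr_2\big((\L_1^{a}\times T^*M_2)\cap T^*_\Gamma\big),\qquad \oll{g}(\L_2)=\pr_1\big((T^*M_1\times\L_2)\cap T^*_\Gamma\big).
\end{equation*}
So both transports are compositions of Lagrangian correspondences with isotropic sets. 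I then use the standard linear-algebra fact that the composition of an isotropic subspace with a Lagrangian relation is isotropic. To globalize it, I stratify the constructible set $C=(\L_1^{a}\times T^*M_2)\cap T^*_\Gamma$ into smooth locally closed pieces on which $\pr_2$ is smooth onto its smooth image. On such a piece, take a tangent vector $w$ to the image. It lifts to a tangent vector $(v,w)$ of $C$ with $v$ tangent to (a smooth stratum of) $\L_1^a$. Since $T^*_\Gamma$ is isotropic, $\om_1(v,v')=-\om_2(w,w')$ for any two such lifts. Since $\L_1$ is isotropic, $\om_1(v,v')=0$, and hence $\om_2(w,w')=0$. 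The images of the strata are dense in $\orr{g}(\L_1)$ up to lower-dimensional pieces. Because isotropy of a closure is detected on a dense open set of its smooth locus, applying the same argument to the lower strata inductively gives isotropy of the whole image. The pullback $\oll{g}$ is treated identically.

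\textbf{Main obstacle.} The genuine difficulty is the genericity in Step 2. The definition only asks $\om$ to vanish on the smooth locus of the closure, whereas the argument controls tangent vectors only on strata chosen generic enough that lifts exist and $\L_1$ is smooth there. I would handle this with generic smoothness in characteristic zero: choose a stratification of $\ov{\L_1}$ and of $C$ compatible with $\pr_2$. An irreducible component of $\ov{\orr{g}(\L_1)}$ is the closure of the image of some stratum, and isotropy at a general point implies isotropy on its whole smooth locus by continuity of $\om$. The base-change identity in Step 1 is the other point where care is needed, but it is formal.
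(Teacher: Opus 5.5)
Your proposal is correct and follows the paper's proof. The scheme case is the composition of an isotropic set with the Lagrangian correspondence $X_1\times_{X_2}T^*X_2$; your graph conormal is this correspondence up to the antipode on the first factor. The stack case is reduced to the scheme case through atlases, and your identity $\oll{v_2}\orr{f}(\L_1)=\orr{g}\oll{v_1}(\L_1)$ is exactly what the paper obtains from Lemma \ref{l:Cart Lag} combined with $\orr{h}\oll{h}(L)=L$ for the smooth surjection $h$.

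The one real difference is that the paper simply cites \cite[Lemma 1]{Gin} for the scheme case, while you sketch its proof. In that sketch, the step ``$\om_1(v,v')=0$ because $\L_1$ is isotropic'' tacitly uses that smooth strata lying in the singular locus of $\ov{\L_1}$ are still isotropic. This is true: pull $\om$ back to a resolution of $\ov{\L_1}$, where it vanishes identically, and apply generic smoothness. It is not, however, literally the definition.
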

\begin{proof} We first consider the case where $\frM_1=X_1$ and $\frM_2=X_2$ are smooth schemes over $k$. In this case, the map $\L_f:=X_1\times_{X_2}T^*X_2\subset T^*X_1\times T^*X_2$ sending $(x_1, x_2, \xi_2)$ to  $(x_1, f^*(\xi_2), x_2, \xi_2)$ is a Lagrangian for the symplectic form $-\om_1+\om_2$ on $T^*X_1\times T^*X_2$ (where $\om_i$ is the canonical symplectic form on $X_i$). The statement for $\orr{f}(\L_{1})$ follows from \cite[Lemma 1]{Gin} applied to $\L_1\subset T^*X_1$ and $\L=\L_f\subset T^*X_1\times T^*X_2$. The statement for $\oll{f}(\L_2)$ follows from \cite[Lemma 1]{Gin} by switching the roles of $T^*X_1$ and $T^*X_2$.

Now we reduce the general case to the scheme case proved above. Let $\a_2: X_2\to \frM_2$ be a smooth surjection with $X_2$ a smooth scheme over $k$. Let $X_1:=\frM\times_{\frM_2}X_2$. Let $h: U\to X_1$ be a smooth surjection with $U$ a smooth scheme over $k$. We have a diagram where the square is Cartesian
\begin{equation*}
\xymatrix{U\ar[dr]^h\ar@/_1pc/[ddr]_{\a_1'}\ar@/^{1pc}/[drr]^{g'} \\
& X_1\ar[d]^{\a_1}\ar[r]^{g} & X_2\ar[d]^{\a_2}\\
& \frM_1\ar[r]^f & \frM_2}
\end{equation*}
To show $\oll{f}(\L_{2})$ is isotropic, it suffices to show $\oll{\a_1'}\oll{f}(\L_{2})\subset T^{*}U$ is isotropic. We have
\begin{equation*}
\oll{\a_{1}'}\oll{f}(\L_{2})=\oll{g'}\oll{\a_2}(\L_{2}).
\end{equation*}
Since $\L_{2}$ is isotropic and $\a_2$ is smooth, $\oll{\a_2}(\L_{2})$ is isotropic. By the scheme case applied to $g': U\to X_{2}$, we see that $\oll{g'}\oll{\a_2}(\L_{2})$ is isotropic, hence the same is true for $\oll{\a_{1}'}\oll{f}(\L_{2})$.

It remains to show that $\orr{f}(\L_{1})$ is isotropic. It suffices to show that $\oll{\a_2}\orr{f}(\L_{1})$ is isotropic. By Lemma \ref{l:Cart Lag}, we have
\begin{equation*}
\oll{\a_{2}}\orr{f}(\L_{1})=\orr{g}\oll{\a_{1}}(\L_{1}),
\end{equation*}
hence it suffices to show that $\orr{g}\oll{\a_{1}}(\L_{1})$ is isotropic. Let $L_{1}=\oll{\a_{1}}(\L_{1})$, which is isotropic in $T^{*}X_{1}$. Since $h$ is a smooth surjection, it is easy to see that $\orr{h}\oll{h}(L)=L$ for any subset $L\subset T^{*}X_{1}$. In particular, $\orr{h}\oll{h}(L_{1})=L_{1}$, hence
\begin{equation*}
\orr{g}(L_{1})=\orr{g}\orr{h}\oll{h}(L_{1})=\orr{g'}\oll{h}(L_{1}).
\end{equation*}
Now $\oll{h}(L_{1})$ is isotropic, and by the scheme case applied to $g'$, $\orr{g'}\oll{h}(L_{1})$ is isotropic, hence $\orr{g}(L_{1})=\orr{g}\oll{\a_{1}}(\L_{1})=\oll{\a_{2}}\orr{f}(\L_{1})$ is isotropic.
\end{proof}

\sss{Relative transport}\label{sss:rel transport}
Suppose $\frM_1$ and $\frM_2$ both admit smooth morphisms $\pi_i: \frM_i\to S$ to some base stack $S$, and $f:\frM_1\to \frM_2$ is a morphism over $S$. We may define the correspondence between relative cotangent stacks
\begin{equation*}
\xymatrix{ T^{*}(\pi_{1}) & \frM_{1}\times_{\frM_{2}}T^{*}(\pi_{2})\ar[r]^-{f^{\na,\rel}}\ar[l]_-{df^\rel} & T^{*}(\pi_{2})
}
\end{equation*}
and define the relative transport of constructible subsets in the relative cotangent stacks for $\L_i\subset T^*(\pi_i)$
\begin{eqnarray*}
\orr{f}^\rel(\L_{1})=f^{\na,\rel}( df^{\rel,-1}(\L_{1}))\subset T^{*}\frM_{2}, \\
\oll{f}^\rel(\L_{2})=df^\rel( (f^{\na,\rel})^{-1}(\L_{2}))\subset T^{*}\frM_{1}.
\end{eqnarray*}

\section{Global nilpotent cone: recollections}

We recall here two constructions of the global nilpotent cone for the moduli stack of $G$-bundles of a single curve. We also extend these constructions to the situation of $G$-bundles with Iwahori level structures.

\subsection{Groups} Let $G$ be a semisimple group over $k$, $B\subset G$ a Borel subgroup, with unipotent radical $N = [B, B]$, and Cartan quotient $H = B/N$. Let $\frg$, $\frb$, $\frn$, and $\frh$ denote the respective Lie algebras. Let $W$ denote the Weyl group, and $\frc = \frh\quot W =  \Spec k[\frh]^W\cong \Spec k[\frg]^G=\frg\quot G$.

We have  the characteristic polynomial map
$$
\xymatrix{
\chi:\frg/(G\times \Gm)  \ar[r] & (\frg\quot G)/\Gm \simeq (\frh\quot W)/\Gm=\frc/\Gm
}
$$ 
where the $G$-action on $\frg$ is the adjoint action, and the $\Gm$-action  is by  scaling. 
Note the $\Gm$-action on $\frh$ is also by  scaling, but 
 the weights of the induced $\Gm$-action on the affine space $\frc$ are the degrees of $W$.
 
Let $\wt \frg \to\frg$ be the Grothendieck-Springer alteration, and recall the canonical isomorphism
$\wt \frg/G \simeq \frb/B$ of adjoint quotients. We have the ordered eigenvalue map 
$$
\xymatrix{
\wt \chi:\wt \frg/(G \times \Gm) \simeq \frb/(B\times \Gm)  \ar[r] & \frh/\Gm 
}
$$ 
induced by $\frb\to \frh = \frb/\frn$.

\subsection{Marked smooth DM curves}\label{sss:glob nilp cone}
For applications to automorphic gluing, we will consider a slightly larger class of curves. Namely, let $X$ be a connected Deligne-Mumford stack that is proper and smooth of dimension $1$ over $k$, which is generically a scheme. Let $\omega_X$ be the canonical bundle of $X$.


Let $\Sigma\subset X$ be a finite set of closed points with trivial automorphism groups, and $\omega_X(\Sigma)$ the sheaf of $1$-forms with possibly simple poles at $\Sigma$. We will use the canonical isomorphism $\Res_{\Sigma}: \omega_X(\Sigma)|_\Sigma \simeq \cO_\Sigma$ given by taking the residues.

For a $G$-bundle $\cE$ over $X$ and a representation $V$ of $G$,  let $\cE(V)$ be the associated vector bundle on $X$.

For a stack $Y$ with $\Gm$-action, let $Y_{\om_{X}(\Sigma)}\to X$ denote the twist of $Y$ using the $\Gm$-torsor associated to $\om_{X}(\Sigma)$. Then the restriction of $Y_{\om_{X}(\Sigma)}$ over $\Sigma$ has a canonical trivialization $Y\times \Sigma$ using the residue map.

Consider the moduli stack
$$
\xymatrix{ 
\Bun_{G, N}(X, \Sigma) := \Maps((X, \Sigma), (\BB G, \BB N))
}
$$
classifying pairs $(\cE, \cE_{\Sigma,N})$ of a $G$-bundle $\cE$ on $X$ with an $N$-reduction $\cE_{\Sigma,N}$ along $\Sigma$.

Consider the moduli stack of Higgs bundles
$$
\xymatrix{
 \operatorname{Higgs}_{G, N}(X, \Sigma) = \Maps ((X, \Sigma), (\frg_{\om_{X}(\Sigma)}/(G\times \Gm),
  \frb/N)) 
}$$
classifying data $(\cE, \cE_{\Sigma,N}, \ph)$ consisting of a pair $(\cE, \cE_{\Sigma,N}) \in \Bun_{G, N}(X, \Sigma)$ along with a Higgs field 
$$
\xymatrix{
\ph\in \upH^0(X, \cE(\frg) \otimes \omega_X(\Sigma))
}
$$
such that
$$
\xymatrix{
\Res_{\Sigma}(\ph)  \in \upH^0(\Sigma, \cE_{\Sigma,N}(\frb)).
}
$$

The Killing form on $\frg$ gives a $G$-invariant identification $\frg^*\simeq \frg$, with induced identification $(\frg/\frn)^* \simeq \frb$. Serre duality provides  
 an isomorphism \footnote{We only consider classical stacks in this paper; the isomorphism continues to hold if both sides are given the canonical derived structures.}
$$
\xymatrix{
 T^*\Bun_{G, N}(X, \Sigma)  \simeq \operatorname{Higgs}_{G, N}(X, \Sigma) 
}$$
 
Consider the Hitchin base
$$
\xymatrix{
A_{G, N}(X, \Sigma)= 
\Maps((X, \Sigma), (\frc_{\omega_X(\Sigma)}, \frh))
}
$$ 
classifying pairs $(a, a_\Sigma)$ of a section
$a\in \upH^0(X,  \frc_{\omega_X(\Sigma)})$ with a lift of the residue $\Res_\Sigma (a)\in \upH^0(\Sigma, \frc)$ to $a_\Sigma\in \upH^0(\Sigma, \frh)$. If we choose homogeneous free generators $f_{1},\cdots, f_{r}$ of $\Sym(\frg^{*})^{G}$ of degrees $d_{1},\cdots, d_{r}$, then 
\begin{equation*}
A_{G, N}(X, \Sigma)\cong (\prod_{i=1}^{r}\cohog{0}{X,\om_{X}(\Sig)^{\ot d_{i}}})\times_{\frc}\frh.
\end{equation*}

Applying the characteristic polynomial map $\chi$ and ordered eigenvalue map $\wt \chi$ to Higgs fields provides the Hitchin system 
$$
\xymatrix{
H_{X,\Sig}^N:T^*\Bun_{G, N}(X, \Sigma)  \simeq  \operatorname{Higgs}_{G, N}(X, \Sigma)  \ar[r] &  A_{G, N}(X, \Sigma)
}
$$ 

The  {\em global nilpotent cone} is defined to be the fiber  of $H_{X,\Sig}^N$ over the zero point $0\in A_{G, N}(X, \Sigma)$
$$
\cN_{X,\Sigma}^{N}= (H^N_{X,\Sig})^{-1}(0) \subset T^*\Bun_{G, N}(X, \Sigma)
$$

When $\Sigma=\vn$, we denote the global nilpotent cone by $\cN_X$. 
It is proved by Laumon \cite{Lau} (in type $A$), Faltings \cite{Fa} and Ginzburg \cite{Gin} that (the reduced structure of) $\cN_X$ is a closed conic Lagrangian. The same is true when $\Sigma\ne\vn$, as we will show in Proposition \ref{p:two cones B} and Corollary \ref{c:NN}.

We may replace the $N$-reduction by $B$-reduction along $\Sigma$, and get a variant $\cN^B_{X,\Sigma}$ as the zero fiber of the Hitchin map 
\begin{equation*}
H^B_{X,\Sig}: T^*\Bun_{G,B}(X,\Sigma)\to A_{G,B}(X,\Sig)=
\Maps((X, \Sigma), (\frc_{\omega_X(\Sigma)}, \{0\})).
\end{equation*}
Using homogeneous free generators $f_{1},\cdots, f_{r}$ of $\Sym(\frg^{*})^{G}$, we have
\begin{equation*}
A_{G,B}(X,\Sig)\cong \prod_{i=1}^{r}\cohog{0}{X,\om^{\ot d_{i}}((d_{i}-1)\Sig)}.
\end{equation*}



We record here the standard dimension calculation.
\begin{lemma}\label{l:base dim}
Assume
\begin{equation}\label{hyp}
\deg \om_{X}+|\Sigma|>0.
\end{equation}
Then we have
\begin{equation*}
\dim\Bun_{G,N}(X,\Sigma)=\dim A_{G,N}(X,\Sig).
\end{equation*}
If moreover $G$ is semisimple, then 
\begin{equation*}
    \dim\Bun_{G,B}(X,\Sigma)=\dim A_{G,B}(X,\Sig).
\end{equation*}
\end{lemma}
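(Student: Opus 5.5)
This is a direct computation of both sides. I compute $\dim\Bun$ by Riemann--Roch, I compute $\dim A$ by Riemann--Roch plus vanishing of $\upH^1$, and then I compare the two using the identity $\sum_i(2d_i-1)=\dim G$.

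\emph{Bundle side.} Let $g$ be the genus of the coarse curve. On a DM curve with generic trivial stabilizers, Riemann--Roch for $\cE(\frg)$ involves correction terms at stacky points. To avoid this I would first treat $X$ a scheme and then indicate the modification. For $X$ a scheme, $\dim\Bun_G(X)=\dim G\,(g-1)$ because $\deg\cE(\frg)=0$. The map $\Bun_{G,N}(X,\Sig)\to\Bun_G(X)$ has fibers $(G/N)^{\Sig}$, so
\begin{equation*}
\dim\Bun_{G,N}=\dim G(g-1)+|\Sig|\dim(G/N).
\end{equation*}
Similarly $\dim\Bun_{G,B}=\dim G(g-1)+|\Sig|\dim(G/B)$.

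\emph{Base side.} Since $\om_X(\Sig)^{\ot d}=\om_X^{\ot d}(d\Sig)$ and $d\ge 2$, its degree is $d(2g-2+|\Sig|)>2g-2$ under \eqref{hyp}. Hence $\upH^1$ vanishes, and Riemann--Roch gives dimension $(2d-1)(g-1)+d|\Sig|$.

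For the $N$-case, $A_{G,N}$ is a fiber product with $\frh^\Sig$ over $\frc^\Sig$. The map $\frh\to\frc$ is finite surjective, and the residue map $\prod_i\upH^0(\om(\Sig)^{d_i})\to\frc^\Sig$ is surjective when $\upH^1(\om^{d}((d-1)\Sig))=0$. This holds since that bundle has degree $d(2g-2)+(d-1)|\Sig|>2g-2$: when $g\ge1$ this is clear, and when $g=0$ it needs $(d-1)|\Sig|>2d-2$, i.e.\ $|\Sig|\ge3$, which is exactly \eqref{hyp}. Therefore
\begin{equation*}
\dim A_{G,N}=\textstyle\sum_i[(2d_i-1)(g-1)+d_i|\Sig|].
\end{equation*}
Using $\sum(2d_i-1)=\dim G$ and $\sum d_i=\dim\frb=\dim G/N$ (since $\sum(d_i-1)=|\Phi^+|$), this matches $\dim\Bun_{G,N}$.

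For the $B$-case, $A_{G,B}=\prod_i\upH^0(\om^{d_i}((d_i-1)\Sig))$. By the vanishing above it has dimension $\sum[(2d_i-1)(g-1)+(d_i-1)|\Sig|]=\dim G(g-1)+|\Phi^+||\Sig|$, which matches $\dim\Bun_{G,B}$. In the $N$-case I do not need to separate $Z(G)$, since the identities used are purely Lie-theoretic with $r=\rk$. The semisimplicity hypothesis for $B$ should be where the automorphism count enters: the center of $G$ is finite, so no $\dim Z$ correction appears in the $\Bun$ dimension. For the $N$-reduction, the $\sum d_i=\dim\frh+|\Phi^+|$ formula already absorbs this.

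\emph{Main obstacle.} The hardest step is the stacky points of $X$. There $\cE(\frg)$ may have nonzero degree and Riemann--Roch acquires local correction terms, while $\om_X$ on the coarse space differs from $\om_X$ on the stack. I would handle this by pushing forward to the coarse curve. At a stacky point with stabilizer $\mu_e$, the pushforward of $\frg_{\om^d}$ invariants and the pushforward of $\cE(\frg)$ should both shift by the same local term. This is the equality of local dimensions of parahoric $\Bun$ and its Hitchin base, i.e.\ the known dimension identity for parahoric Hitchin systems. I would verify it by a local computation of the invariants of $\mu_e$ acting on $\frg\tl{z}$ versus on $\frc$ with weights $d_i$.
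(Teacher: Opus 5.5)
For $X$ a scheme and $G$ semisimple, your computation is exactly the ``standard dimension calculation'' the paper invokes. Its only stated ingredient is the vanishing of $\upH^1(X,\om_X(\Sig)^{\ot d_i})$, followed by Riemann--Roch and the identities $\sum_i(2d_i-1)=\dim G$, $\sum_i d_i=\dim G/N$, $\sum_i(d_i-1)=\dim G/B$. You do not need surjectivity of the residue map: $\frh^{\Sig}\to\frc^{\Sig}$ is finite surjective, hence so is its base change $A_{G,N}(X,\Sig)\to\prod_i\upH^0(X,\om_X(\Sig)^{\ot d_i})$.

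The gaps are in how you go beyond this case.

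\emph{Non-semisimple $G$.} Your vanishing step uses $d\ge 2$, which holds only for semisimple $G$. A reductive $G$ has $\dim Z(G)$ basic invariants of degree $1$, and the difficulty there is cohomological, not Lie-theoretic, so ``the identities are purely Lie-theoretic'' does not cover it. For $d=1$ the $N$-case factor is $\upH^0(X,\om_X(\Sig))$, and $\upH^1(X,\om_X(\Sig))\cong\upH^0(X,\cO_X(-\Sig))^{\vee}$ vanishes iff $\Sig\neq\varnothing$.
\begin{itemize}
\item If $\Sig\neq\varnothing$, then $h^0=g-1+|\Sig|$ agrees with your formula at $d=1$, and the count goes through.
\item If $\Sig=\varnothing$, it fails: for $G=\Gm$ one has $\dim\Bun_{\Gm}(X)=g-1$ but $\dim A=g$.
\end{itemize}
So the first assertion for non-semisimple $G$ needs $\Sig\neq\varnothing$, which your argument neither uses nor proves.

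Your account of why the $B$-case needs semisimplicity is also misplaced. For every reductive $G$ one has $\dim\Bun_G(X)=-\chi(\cE(\frg))=\dim G\,(g-1)$, with no correction from the center. The discrepancy is on the base: its degree-one factors are $\upH^0(X,\om_X^{\ot d}((d-1)\Sig))=\upH^0(X,\om_X)$, of dimension $g$ rather than $g-1$. Hence $\dim A_{G,B}=\dim\Bun_{G,B}+\dim Z(G)$.

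\emph{Stacky points.} This step is not just unfinished; the heuristic that both sides ``shift by the same local term'' is false. At a point with stabilizer $\mu_e$, the correction to $\chi(X,\cE(\frg))$ depends on the local type of $\cE$, i.e.\ the $\mu_e$-weights on $\cE(\frg)_x$. So $\Bun_G(X)$ need not be equidimensional, while $A$ is one fixed vector space, and the equality can fail outright.

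Take $G=\mathrm{SL}_2$, $\Sig=\varnothing$, and $X$ with coarse curve $\ov X$ of genus $g\ge 1$ and a single $\mu_2$-point $x$ over $\bar x\in\ov X$.
\begin{itemize}
\item Every homomorphism $\mu_2\to\mathrm{SL}_2$ is central, so $\mu_2$ acts trivially on $\cE(\frg)_x$. Then $\pi_*\cE(\frg)$ is a self-dual rank-$3$ bundle on $\ov X$ pulling back to $\cE(\frg)$, and $\dim\Bun_{\mathrm{SL}_2}(X)=3g-3$.
\item In a chart with $t=z^2$ one has $(dz)^2=(dt)^2/(4t)$, so $\om_X^{\ot 2}$ is the pullback of $\om_{\ov X}^{\ot 2}(\bar x)$. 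Hence $\dim A=h^0(\ov X,\om_{\ov X}^{\ot 2}(\bar x))=3g-2$.
\item Nevertheless $\deg\om_X=2g-\tfrac32>0$, so the hypothesis holds.
\end{itemize}
So no local computation can give the lemma for general DM curves. What your argument actually establishes is the scheme case, with $\Sig\neq\varnothing$ required when $G$ is not semisimple. The paper's $\upH^1$ remark is likewise only the scheme-level computation and does not address stacky points.
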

The assumption \eqref{hyp} is used to ensure $\cohog{1}{X, \om_{X}(\Sig)^{\ot d_{i}}}=0$ in the calculation of $\dim A_{G,N}(X,\Sig)$.

\begin{remark}
    The dimension equalities in Lemma \ref{l:base dim} are necessary conditions for the global nilpotent cones $\cN^N_{X,\Sigma}$ and $\cN^B_{X,\Sigma}$ to be Lagrangians. For deeper level structures such as ones defined by principal congruence subgroups, one can still define the global nilpotent cone as the zero fiber of the Hitchin map, but typically the dimension equality analogous to those in Lemma \ref{l:base dim} will not hold. For example, for the first congruence level structure along $\Sigma$ (i.e., adding a trivialization along $\Sigma$), the dimension of the resulting moduli stack $\Bun_{G,1}(X,\Sig)$ is larger than the dimension of the corresponding Hitchin base. In particular, the global nilpotent cones for general level structures are not Lagrangian . 
\end{remark}



\subsection{Alternative construction: Eisenstein cone} 
Ginzburg \cite{Gin} gives an alternative construction of the global nilpotent cone as the microlocal shadow of the Eisenstein series construction for automorphic sheaves. It turns out that this alternative construction generalizes more easily to the family situation. We recall Ginzburg's construction.

Consider first a smooth connected projective curve $X$ over $k$. Let $\Bun_{B}(X)$ be the moduli stack of $B$-bundles on $X$ and $ p: \Bun_{B}(X)\to \Bun_{G}(X)$ be the  induction map. Consider the associated Lagrangian correspondence
\begin{equation*}
\xymatrix{T^{*}\Bun_{B}(X) &  \Bun_{B}(X)\times_{ \Bun_{G}(X)}T^{*} \Bun_{G}(X)\ar[l]_-{dp}\ar[r]^-{p^{\na}} & T^{*}\Bun_{G}(X) }
\end{equation*}

Let $0_{\Bun_{B}(X)}$ denote the zero section of $T^*\Bun_{B}(X)$.

\begin{defn} The {\em Eisenstein cone} $\cN^{\Eis}_{X}$ of $X$ is the constructible subset $T^{*}\Bun_{G}(X)$
\begin{equation*}
\cN^{\Eis}_{X}=\orr{p}(0_{\Bun_{B}(X)}) = p^{\na}((dp)^{-1}(0_{\Bun_{B}(X)})).
\end{equation*}
\end{defn}

By definition, a point $(\cE,\ph)\in T^{*}\Bun_{G}(X)$ lies in $\cN^{\Eis}_{X}$ if and only if there exists a $B$-reduction $\cE_B$ of $\cE$ such that $\ph\in \cohog{0}{X, \cE_{B}(\frn)\ot\om_{X}}$.


It is not a priori clear that $\cN^{\Eis}_{X}$ is a constructible subset of $T^{*}\Bun_{G}(X)$, because the map $p: \Bun_{B}(X)\to \Bun_{G}(X)$ is not of finite type. The constructibility of $\cN^{\Eis}_{X}$ will be proved in the next lemma by identifying it with the global nilpotent cone $\cN_{X}$.



\begin{lemma}\label{l:two cones} Assume $\deg\om_X>0$ and $G$ is semisimple \footnote{These assumptions will be removed in the more general Proposition  \ref{p:two cones B}.}. Then the subset $\cN^{\Eis}_{X}$ of $T^{*}\Bun_{G}(X)$ coincides with the underlying set of the global nilpotent cone $\cN_{X}$,  i.e.~the zero-fiber of the Hitchin map. In particular, $\cN^{\Eis}_{X}$ is a closed conic Lagrangian of $T^{*}\Bun_{G}(X)$.
\end{lemma}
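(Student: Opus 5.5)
Set-theoretic equality comes first; the Lagrangian property will follow from the isotropy results of \S2 together with a dimension count. For the inclusion $\cN^{\Eis}_X\subset\cN_X$: if $\cE$ has a $B$-reduction $\cE_B$ and $\ph\in\cohog{0}{X,\cE_B(\frn)\ot\om_X}$, then at every point of $X$ the Higgs field lies in a conjugate of $\frn$. Hence all invariant polynomials $f_i(\ph)$ vanish identically, so $H_X(\cE,\ph)=0$.

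For the reverse inclusion, let $(\cE,\ph)\in\cN_X$, so $\ph$ is pointwise nilpotent. I would produce the $B$-reduction in two steps. First, over the generic point $\eta$ of $X$, the element $\ph_\eta\in\cE_\eta(\frg)$ is nilpotent. The Springer fiber of a nilpotent element over the function field $k(X)$ has a $k(X)$-rational point. Since $k(X)$ has cohomological dimension $1$, $\cE_\eta$ is trivial by Steinberg's theorem, so $\ph_\eta$ lies in $\frg\ot k(X)$. One can then take a Borel subalgebra over $k(X)$ containing it: for instance, use the Jacobson--Morozov/canonical parabolic attached to $\ph_\eta$, which is defined over $k(X)$, then a Borel inside its Levi, which exists because $G$ is split over $k(X)$. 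Second, the resulting generic $B$-reduction extends uniquely to all of $X$, by the valuative criterion for the proper flag variety $\cE/B$ over the smooth curve $X$. The condition $\ph\in\cE_B(\frn)\ot\om_X$ is closed and holds generically, so it holds everywhere. This is the step I expect to need the most care: the rationality of the Borel containing $\ph_\eta$. The cleanest route is to take the flag canonically attached to the kernel filtration of $\ph_\eta$ via Jacobson--Morozov, which is Galois-stable; this should be made precise.

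Next, for the Lagrangian property, $0_{\Bun_B(X)}$ is isotropic, so $\cN^{\Eis}_X=\orr{p}(0_{\Bun_B(X)})$ is isotropic by Lemma \ref{l:pres iso}. Constructibility is not an issue, since $\cN^{\Eis}_X$ equals the closed subset $\cN_X$. Note that Lemma \ref{l:pres iso} was stated for morphisms; $p$ is locally of finite type, and the isotropy argument is local, so restricting to finite type opens of $\Bun_B$ suffices. Conicity is clear from either description.

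Finally, by Lemma \ref{l:Lag crit} it remains to show that every irreducible component of $\cN_X$ has dimension $\ge\dim\Bun_G(X)$. Here I use that $\cN_X$ is the fiber of $H_X:T^*\Bun_G(X)\to A_G(X)$ over $0$. The classical cotangent stack has every component of dimension $\ge 2\dim\Bun_G(X)$, being the classical truncation of a quasi-smooth derived stack of virtual dimension $2\dim\Bun_G$. Since $\dim A_G(X)=\dim\Bun_G(X)$ by Lemma \ref{l:base dim} (using $\deg\om_X>0$ and $G$ semisimple), every component of the fiber over $0$ has dimension $\ge 2\dim\Bun_G-\dim A_G=\dim\Bun_G$. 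Combined with isotropy, this shows that each component has exactly dimension $\dim\Bun_G(X)$, so $\cN_X=\cN^{\Eis}_X$ is a closed conic Lagrangian.
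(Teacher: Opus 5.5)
Your proposal is correct and follows essentially the same route as the paper. Both arguments use three steps. First, the inclusion $\cN^{\Eis}_X\subset\cN_X$ comes from nilpotence of $\cE_B(\frn)$. Second, the reverse inclusion comes from the Jacobson--Morozov parabolic of $\ph_\eta$ at the generic point. The paper saturates the $P$-reduction and then takes an arbitrary $B$-reduction of $\cE_P$, while you choose an $F$-rational Borel inside $P$ and extend it by properness of $\cE/B$; the two are equivalent. Third, isotropy comes from Lemma \ref{l:pres iso} and the dimension bound from Lemma \ref{l:base dim}.

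Your caveat that $p$ is not of finite type, so isotropy has to be checked on finitely many finite-type pieces, is a point the paper only makes explicit later, in Proposition \ref{p:two cones B}.
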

\begin{proof}
This is essentially the argument of Ginzburg \cite[Lemma 5]{Gin}. By the constructibility of $\cN^{\Eis}_{X}$, it suffices to show that $\cN^{\Eis}_{X}(K)=\cN_{X}(K)$ for any algebraically closed field $K$ containing $k$. By making a base change from $k$ to $K$, we may assume $K=k$. 

First we show that $\cN^{\Eis}_{X}(k)\subset\cN_{X}(k)$. Let $\cE_{B}$ be a $B$-torsor over $X$ and let  $\cE$ be the induced $G$-torsor. The cotangent space of $\Bun_{B}(X)$ at $\cE_{B}$ is $\cohog{0}{X, \cE_{B}(\frb^{*})\ot\om_{X}}$. The differential of the induction map $p: \Bun_{B}(X)\to \Bun_{G}(X)$ at $\cE$ is induced by the restriction map $\cE(\frg^{*})=\cE_{B}(\frg^{*})\to \cE_{B}(\frb^{*})$ 
\begin{equation*}
\xymatrix{  dp:    \cohog{0}{X, \cE(\frg^{*})\ot\om_{X}} \ar[r] &     \cohog{0}{X, \cE_{B}(\frb^{*})\ot\om_{X}}
}
\end{equation*}
Under the Killing form on $\frg$, $\ker(\frg^{*}\to \frb^{*})$ is identified with 
$\frn$, Therefore $\ker(dp)_{\cE} = \cohog{0}{X, \cE_{B}(\frn)\ot\om_{X}}$, which visibly consists of nilpotent Higgs fields.

Now we prove the other inclusion $\cN_{X}(k)\subset\cN^{\Eis}_{X}(k)$. Let $(\cE,\ph)\in \cN_{X}\subset T^{*}\Bun_{G}$, where $\ph\in\cohog{0}{X,\cE(\frg)\ot\om_{X}}$ is a nilpotent Higgs field. Restricting to the generic point $\eta$ of $X$, and choosing trivializations of $\cE_{\eta}$ and $\om_{X,\eta}$, $\ph_{\eta}$ gives a nilpotent element in the Lie algebra $\frg(F)$, where $F=k(X)$ is the function field of $X$. By the Jacobson-Morozov theorem, $e$ extends to an $\sl_{2}$-triple $(e,h,f)$ in $\frg(F)$, and gives a canonical parabolic subgroup $P$ (i.e., $\Lie P$ is the sum of $\ge0$ weight spaces of $\ad(h)$) of $G$ defined over $F$, such that $\ph_{\eta}\in \frn_{P}$  (the nilpotent radical of $\Lie(P)$). 
The parabolic subgroup $P$ can be viewed as a $P$-reduction of the trivialized $\cE_{\eta}$, which then gives a $P$-reduction $\cE_{P}$ of $\cE$ by saturation. By construction, 
\begin{equation*}
\ph\in\cohog{0}{X,\cE_{P}(\frn_{P})\ot\om_{X}}.
\end{equation*}
Now let $\cE_{B}$ be an arbitrary reduction of $\cE_{P}$ to $B$, then $\cE_B(\frn)\supset \cE_{P}(\frn_{P})$, hence $\ph$ lies in $\cohog{0}{X,\cE_{B}(\frn)\ot\om_{X}}$ as well, and therefore $(\cE,\ph)\in \cN^{\Eis}_{X}(k)$.

Finally we show that $\cN^{\Eis}_{X}$ is a Lagrangian. By Lemma \ref{l:pres iso}, $\cN^{\Eis}_{X}$ is isotropic. It remains to show that $\cN^{\Eis}_{X}$ has dimension $\ge\dim\Bun_{G}(X)$ everywhere, which follows from Lemma \ref{l:base dim}.
\end{proof}

\subsection{Eisenstein cone with Iwahori level structure}\label{sss:univ cone B}
Let $X$ be a smooth projective curve over $k$, and $\Sigma\subset X$ be finitely many closed points where we impose $B$ or $N$ reductions on $G$-bundles.
 
Let $\Bun_{G,1}(X,\Sigma)$ (resp. $\Bun_{B,1}(X,\Sigma)$) be the moduli of $G$-bundles (resp. $B$-bundles) on $X$ with a trivialization at $\Sigma$. Since $\Bun_{G,1}(X,\Sigma)\to \Bun_{G}(X)$ (resp. $\Bun_{B,1}(X,\Sigma)\to \Bun_{B}(X)$) is a $G^\Sigma$-torsor (resp. $B^{\Sigma}$-torsor), the cotangent bundle $T^{*}\Bun_{G,1}(X,\Sigma)$ (resp. $T^{*}\Bun_{B,1}(X,\Sigma)$) is $G^\Sigma$-equivariant (resp. $B^{\Sigma}$-equivariant) hence descends to a vector bundle $\Om_{G, X,\Sigma}$ (resp. $\Om_{B, X,\Sigma}$) over $\Bun_{G}(X)$ (resp. $\Bun_{B}(X)$), whose fiber at $\cE_G$  (resp. $\cE_{B}$) is 
 $\cohog{0}{X, \cE_{G}(\frg^{*})\ot\om_{X}(\Sigma)}$  (resp. $\cohog{0}{X, \cE_{B}(\frb^{*})\ot\om_{X}(\Sigma)}$).

Let $p_{\Sigma}: \Bun_{B,1}(X,\Sigma)\to \Bun_{G,1}(X,\Sigma)$ be the induction map, giving rise to  the correspondence
\begin{equation}\label{Om}
\xymatrix{\Om_{B,X,\Sigma} & \Bun_{B}(X)\times_{\Bun_{G}(X)}\Om_{G,X,\Sigma}\ar[l]_-{dp_{\Sigma}}\ar[r]^-{p^{\na}_{\Sig}} & \Om_{G,X,\Sigma}}
\end{equation}
Define 
\begin{equation*}
\un\cN^{\Eis}_{X,\Sigma}=p^{\na}_{\Sig}((dp_{\Sigma}^{-1}(0_{\Bun_{B}(X)})))\subset \Om_{G,X,\Sigma}
\end{equation*}
We have natural maps
\begin{equation*}
q: T^{*}\Bun_{G,B}(X,\Sigma)\to \Om_{G, X,\Sigma}, \quad q': T^{*}\Bun_{G,N}(X,\Sigma)\to \Om_{G, X,\Sigma}
\end{equation*}
given by the differential of the projections
$\Bun_{G,1}(X,\Sigma)\to \Bun_{G,B}(X,\Sigma)$ and $\Bun_{G,1}(X,\Sigma)\to \Bun_{G,N}(X,\Sigma)$.
Set
\begin{eqnarray*}
\cN^{\Eis, B}_{X,\Sigma}=q^{-1}(\un\cN^{\Eis}_{X,\Sigma}),\quad \cN^{\Eis, N}_{X,\Sigma}=(q')^{-1}(\un\cN^{\Eis}_{X,\Sigma}).
\end{eqnarray*}

By definition, a geometric point $(\cE,\cE_{\Sig, B}, \ph)\in T^{*}\Bun_{G,B}(X,\Sig)(K)$ lies in $ \cN^{\Eis,B}_{X,\Sig}(K)$ if and only if there exists a $B$-reduction $\cE_B$ of $\cE$ such that $\ph\in \cohog{0}{X_{K}, \cE_{B}(\frn)\ot\om_{X}(\Sig)}$. Note that the $B$-reduction $\cE_B$ need not be the same as the $B$-reduction $\cE_{\Sig,B}$ when restricted to $\Sig$. Similarly, $(\cE,\cE_{\Sig, N}, \ph)\in \cN^{\Eis,N}_{X,\Sig}(K)$ if and only if there exists a $B$-reduction $\cE_B$ of $\cE$ such that $\ph\in \cohog{0}{X_{K}, \cE_{B}(\frn)\ot\om_{X}(\Sig)}$.

\sss{Another description of $\cN^{\Eis,B}_{X, \Sigma}$} 
We abbreviate $\Bun_{G,B}(X,\Sigma)$ by $\Bun_{G,B}$, and $\Bun_{B}(X)$ by $\Bun_{B}$, etc. We have an isomorphism
\begin{equation*}
\Bun_{B}\times_{\Bun_{G}}\Bun_{G,B}\simeq \Bun_{B,1}\twtimes{B^{\Sigma}}(G/B)^{\Sigma},
\end{equation*}
from which we get two projections
\begin{equation}\label{BunB1 BunGB}
\xymatrix{(B\bs G/B)^{\Sigma} & \ar[l]_-{\b}\Bun_{B,1}\twtimes{B^{\Sigma}}(G/B)^{\Sigma}\simeq\Bun_{B}\times_{\Bun_{G}}\Bun_{G,B}\ar[r]^-{\a} & \Bun_{G,B}}
\end{equation}

\begin{lemma}\label{l:nilcone B alt}
We have
\begin{equation*}
\cN^{\Eis,B}_{X,\Sigma}=\orr{\a}\oll{\b}(T^{*}(B\bs G/B)^{\Sigma}).
\end{equation*}
\end{lemma}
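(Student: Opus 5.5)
We compare the two sides pointwise on geometric points. As in Lemma \ref{l:two cones}, both sides are images under correspondences, so after base change it suffices to treat $k$-points. We then compute the covectors in $\orr{\a}\oll{\b}(T^{*}(B\bs G/B)^{\Sigma})$ at a point of $\Bun_{G,B}$ and check that they are exactly those $\ph$ with $q(\ph)\in \un\cN^{\Eis}_{X,\Sigma}$.

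\textbf{Step 1: rewrite the right-hand side.} By definition, a covector $\ph\in T^{*}_{(\cE,\cE_{\Sig,B})}\Bun_{G,B}$ lies in the right-hand side iff there is a point $y=(\cE_B,\cE_{\Sig,B})$ of $\Bun_B\times_{\Bun_G}\Bun_{G,B}$ over it with $d\a_y(\ph)\in d\b_y\big(T^{*}_{\b(y)}(B\bs G/B)^{\Sigma}\big)$. Since $\b$ is smooth, $d\b_y$ is injective and its image is the conormal space to the fiber of $\b$ through $y$. So the condition is that $d\a_y(\ph)$ vanishes on the tangent space of the $\b$-fiber.

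\textbf{Step 2: identify the $\b$-fibers.} Fixing $\b(y)$ means fixing the relative position of $\cE_B|_\Sig$ and $\cE_{\Sig,B}$ at each point of $\Sig$. Using the presentation $\Bun_{B,1}\twtimes{B^{\Sigma}}(G/B)^{\Sigma}$, the fiber through $y$ is locally the image of $\Bun_{B,1}\times\{\text{fixed point of }(G/B)^\Sig\}$. Its tangent directions are deformations of the $B$-bundle $\cE_B$ together with its trivialization at $\Sig$, with the induced $G$-data moving compatibly.

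\textbf{Step 3: translate through $\Bun_{G,1}$.} Now pull back along the torsor $\Bun_{G,1}\to\Bun_{G,B}$. A covector $\ph$ on $\Bun_{G,B}$ corresponds to $q(\ph)\in\cohog{0}{X,\cE(\frg^*)\ot\om_X(\Sig)}$ with residue in $\cE_{\Sig,B}(\frn)\cong(\frg/\frb)^*$. Vanishing on the $\b$-fiber directions is the same as vanishing of $q(\ph)$ on the image of $T\Bun_{B,1}\to T\Bun_{G,1}$. By Serre duality this says the restriction of $q(\ph)$ to $\cohog{0}{X,\cE_B(\frb^*)\ot\om_X(\Sig)}$ is zero. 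That is, $q(\ph)\in dp_\Sig^{-1}(0)$ at $\cE_B$, which is exactly $q(\ph)\in\un\cN^{\Eis}_{X,\Sigma}$, i.e. $\ph\in\cN^{\Eis,B}_{X,\Sig}$. Both inclusions follow from this equivalence, since the existential quantifier over $\cE_B$ appears on both sides.

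\textbf{Main obstacle.} The delicate point is Step 3: checking that the tangent space of the $\b$-fiber maps onto the image of $T\Bun_{B,1}$ after passing to $\Bun_{G,1}$. We need the differential identifications to be compatible with the $B^\Sig$-twisting, so that no residual $(G/B)^\Sig$-directions contribute extra constraints. I would do this on the $B^\Sig\times B^\Sig$-torsor $\Bun_{B,1}\times_{\Bun_{G}}\Bun_{G,1}$, where everything is an honest product. On that cover I would apply Lemma \ref{l:Cart Lag} to the Cartesian square relating $\Bun_{B,1}\to\Bun_{G,1}$ and $\Bun_B\times_{\Bun_G}\Bun_{G,B}\to\Bun_{G,B}$, and then descend.
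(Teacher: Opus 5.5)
Your proof is correct and is essentially the paper's argument written out on points. Steps 1--2 prove the identity $q_B^{-1}(0_{\Bun_B})=\oll{\b}(T^{*}(B\bs G/B)^{\Sigma})$, which the paper states without proof. Step 3 is the commutativity $q_B\circ d\a = dp_\Sigma\circ \wt q$ of the left square in the paper's diagram: the composite of the $\b$-fiber inclusion with $\a$ factors as $p_\Sigma$ followed by a $G^{\Sigma}$-translate of $\Bun_{G,1}\to\Bun_{G,B}$. Matching the existential quantifiers over $\cE_B$ is the Cartesianness of the right square, so the extra descent argument you sketch under ``Main obstacle'' is not needed.
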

\begin{proof}
Consider the commutative diagram
\begin{equation}\label{sq Om}
\xymatrix{T^{*}(\Bun_{B,1}\twtimes{B^{\Sigma}}(G/B)^{\Sigma})\ar[d]^{q_{B}} & \Bun_{B}\times_{\Bun_{G}}T^{*}\Bun_{G,B}\ar[d]^{\wt q}\ar[l]_-{d\a}\ar[r]^-{\a^{\na}} & T^{*}\Bun_{G,B}\ar[d]^{q}\\
\Om_{B,X,\Sigma} & \Bun_{B}\times_{\Bun_{G}}\Om_{G,X,\Sigma}\ar[l]_-{dp_{\Sigma}}\ar[r]^-{p^{\na}_{\Sig}} & \Om_{G,X,\Sigma}}
\end{equation}
Here the top row is the Lagrangian correspondence for $\a$ and the bottom row is \eqref{Om}. The map $q_{B}$ is the composition
\begin{equation*}
T^{*}(\Bun_{B,1}\twtimes{B^{\Sigma}}(G/B)^{\Sigma})\incl (T^{*}\Bun_{B,1})\times^{B^{\Sig}} T^{*}(G/B)^{\Sigma}\to (T^{*}\Bun_{B,1})/B^{\Sig}=\Om_{B,X,\Sigma}.
\end{equation*}
Note the right square of \eqref{sq Om} is Cartesian, which implies $q^{-1}p^{\na}_{\Sig}(Z)=\a^{\na}\wt q^{-1} (Z)$ for any subset $Z\subset \Bun_{B}(X)\times_{\Bun_{G}(X)}\Om_{G,X,\Sigma}$. 
We have
\begin{equation*}
\cN^{\Eis,B}_{X,\Sig}=q^{-1}p^{\na}_{\Sig}dp_{\Sig}^{-1}(0_{\Bun_{B}(X)})=\a^{\na}\wt q^{-1} dp_{\Sig}^{-1}(0_{\Bun_{B}(X)})=\a^{\na}d\a^{-1}q_{B}^{-1}(0_{\Bun_{B}(X)})=\orr{\a}q_{B}^{-1}(0_{\Bun_{B}(X)}).
\end{equation*}
Since
\begin{equation*}
q_{B}^{-1}(0_{\Bun_{B}(X)})=\oll{\b}(T^{*}(B\bs G/B)^{\Sigma}),
\end{equation*}
the lemma follows.
\end{proof}

Now we have the following generalization of Lemma \ref{l:two cones}.

\begin{prop}\label{p:two cones B} 
The subset $\cN^{\Eis,B}_{X, \Sigma}$ of $T^{*}\Bun_{G, B}(X, \Sigma)$ coincides with the underlying set of the global nilpotent cone $\cN^B_{X, \Sigma}$. Moreover, $\cN^{\Eis,B}_{X,\Sigma}$ is a closed conic Lagrangian in $T^{*}\Bun_{G, B}(X, \Sigma)$.
\end{prop}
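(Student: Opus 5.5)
We follow the strategy of Lemma \ref{l:two cones}, adapted to the level structure along $\Sigma$. We first compare $K$-points for algebraically closed $K\supset k$; after base change we may assume $K=k$. Constructibility of $\cN^{\Eis,B}_{X,\Sigma}$ is not yet known, so we will instead deduce it once the set-theoretic equality with the closed substack $\cN^B_{X,\Sigma}$ is proved.

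\emph{Inclusion $\cN^{\Eis,B}_{X,\Sigma}\subset \cN^B_{X,\Sigma}$.} Let $(\cE,\cE_{\Sigma,B},\ph)$ lie in the Eisenstein cone, so $\ph\in\cohog{0}{X,\cE_B(\frn)\ot\om_X(\Sigma)}$ for some $B$-reduction $\cE_B$. Then $\ph$ is pointwise nilpotent, and so all invariant polynomials $f_i(\ph)$ vanish in $\cohog{0}{X,\om_X(\Sigma)^{\ot d_i}}$. Hence $H^B_{X,\Sigma}(\ph)=0$, since $A_{G,B}$ only records these sections; the residue condition is automatic for points of $T^*\Bun_{G,B}$.

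\emph{Inclusion $\cN^B_{X,\Sigma}\subset\cN^{\Eis,B}_{X,\Sigma}$.} Take $\ph$ with vanishing characteristic polynomial, so $\ph_\eta\in\frg(F)$ is nilpotent. Apply Jacobson--Morozov to $\ph_\eta$, obtaining the canonical parabolic $P$ over $F$ with $\ph_\eta\in\frn_P$. Saturating gives a $P$-reduction $\cE_P$ of $\cE$ over all of $X$. Since $\cE_P(\frn_P)\ot\om_X(\Sigma)$ is saturated in $\cE(\frg)\ot\om_X(\Sigma)$ and $\ph$ is a global section lying generically in it, we get $\ph\in\cohog{0}{X,\cE_P(\frn_P)\ot\om_X(\Sigma)}$. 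Choosing any $B$-reduction of $\cE_P$ then gives the claim. The residue at $\Sigma$ plays no role, because the Eisenstein cone does not require compatibility of $\cE_B$ with $\cE_{\Sigma,B}$. Thus the two sets agree, $\cN^{\Eis,B}_{X,\Sigma}$ is constructible and closed, and it is clearly conic.

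\emph{Lagrangian property.} Isotropy follows from Lemma \ref{l:nilcone B alt} and Lemma \ref{l:pres iso}: $T^*(B\bs G/B)^\Sigma$ is isotropic (indeed its reduced structure is the whole cotangent stack, with conormals to Bruhat cells as components), and transport along $\b$ and then $\a$ preserves isotropy. By Lemma \ref{l:Lag crit} it remains to show that every component has dimension $\ge\dim\Bun_{G,B}$. Under hypothesis \eqref{hyp} with $G$ semisimple, $\cN^B_{X,\Sigma}$ is the zero fiber of $H^B_{X,\Sigma}$, whose source has the derived structure of $T^*\Bun_{G,B}$ of virtual dimension $2\dim\Bun_{G,B}$, and the target has dimension $\dim\Bun_{G,B}$ by Lemma \ref{l:base dim}. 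Since a fiber of a map from a quasi-smooth stack has dimension at least the virtual dimension minus the dimension of the target, the bound $\ge\dim\Bun_{G,B}$ follows.

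\textbf{Main obstacle.} This dimension step is where we expect the real difficulty. The classical stack $T^*\Bun_{G,B}$ can have excess dimension, so the fiber-dimension estimate must be made on the derived (quasi-smooth) cotangent stack or on a smooth atlas, as in Ginzburg's argument. We also have to remove the hypotheses. If $G$ is reductive, not semisimple, we split off the center: reduce to the derived group and treat the torus factor directly, where the nilpotent cone is the zero section. If \eqref{hyp} fails, namely for genus $0$ with $|\Sigma|\le2$ or genus $1$ with $\Sigma=\vn$, we handle the finitely many cases separately, either by an explicit computation or by adding auxiliary points to $\Sigma$ and pulling back along the forgetful map. For the genus-one case a safer alternative is an independent lower bound: each component contains conormal bundles to Harder--Narasimhan-type strata, and this gives the dimension directly.
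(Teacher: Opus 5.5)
Your set-theoretic comparison (Jacobson--Morozov plus saturation, with $\cE_B$ allowed to disagree with $\cE_{\Sigma,B}$) and your fiber-dimension count under \eqref{hyp} with $G$ semisimple are exactly the paper's.

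\textbf{Isotropy.} This step does not work as written. You apply Lemma~\ref{l:pres iso} to $\a\colon\Bun_B\times_{\Bun_G}\Bun_{G,B}\to\Bun_{G,B}$, but $\a$ is not of finite type. Transport, and hence Lemma~\ref{l:pres iso}, is only set up for finite-type morphisms, where the transported set is constructible. The paper remarks precisely that the lemma cannot be applied directly here. What Lemma~\ref{l:nilcone B alt} actually gives is
\[
\cN^{\Eis,B}_{X,\Sigma}=\bigcup_{\l}W^{\l}_{\Sigma},\qquad W^{\l}_{\Sigma}=\orr{\a^{\l}}\oll{\b^{\l}}(T^{*}(B\bs G/B)^{\Sigma}),
\]
indexed by the degree $\l$ of the $B$-bundle. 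Each $W^{\l}_{\Sigma}$ is isotropic, but a countable union of such sets need not even be constructible. The missing step uses your Step 1. For every finite-type open $U\subset\Bun_{G,B}(X,\Sigma)$, the set $\cN^{\Eis,B}_{X,\Sigma}\cap T^{*}U=\cN^{B}_{X,\Sigma}\cap T^{*}U$ is of finite type. It is therefore covered by finitely many $W^{\l}_{\Sigma}$, hence isotropic.

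\textbf{Dimension bound in the remaining cases.} Beyond \eqref{hyp} with $G$ semisimple, you give a list of options rather than an argument.
\begin{itemize}
\item \emph{Reductive $G$.} The paper pulls back along the smooth surjection $p\colon\Bun_{G,N}(X,\Sigma)\to\Bun_{G,B}(X,\Sigma)$, using $\cN^{N}_{X,\Sigma}=\oll{p}\,\cN^{B}_{X,\Sigma}$ and the $N$-level count in Lemma~\ref{l:base dim}. Your ``split off the center'' can also be made to work, but via the isogeny $G\to G/Z^{0}\times G/G^{\mathrm{der}}$. That map is \'etale onto a union of components of the moduli stacks, and it identifies $\cN^B_{X,\Sigma}$ with the preimage of (nilpotent cone for $G/Z^0$) $\times$ (zero section). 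Going through $Z^{0}\times G^{\mathrm{der}}\to G$ instead misses components of $\Bun_G$ (e.g.\ degrees not divisible by $n$ for $\GL_n$).
\item \emph{Cases where \eqref{hyp} fails.} Your second option is what the paper does: enlarge $\Sigma$ to $\Sigma'$, pull back along the forgetful map $\nu$, and assert $\oll{\nu}\,\cN^{B}_{X,\Sigma}=\cN^{B}_{X,\Sigma'}$. Only $\subset$ holds. Covectors in $\oll{\nu}\,\cN^{B}_{X,\Sigma}$ are Higgs fields regular along $\Sigma'\smallsetminus\Sigma$, whereas $\cN^{B}_{X,\Sigma'}$ also contains nilpotent fields with nonzero residue there. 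For example, take $X=\PP^{1}$, $\Sigma=\vn$, $\Sigma'=\{0,1,\infty\}$, $G=\mathrm{SL}_{2}$, the trivial bundle with flags $\ell_{0}=\ell_{1}=\langle e_{1}\rangle$, $\ell_{\infty}=\langle e_{2}\rangle$, and the field $E_{12}\,(dz/z-dz/(z-1))$. So Lagrangianity of $\cN^{B}_{X,\Sigma'}$ bounds the pullback only from above, not below.
\end{itemize}
The exceptional cases (genus $0$ with $|\Sigma|\le 2$, genus $1$ with $\Sigma=\vn$) therefore genuinely need the separate direct count you allude to.
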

\begin{proof}
The same argument of Lemma \ref{l:two cones} shows that $\un\cN^{\Eis}_{X,\Sigma}$ is the same as the underlying set of the zero fiber of the Hitchin map for $\Om_{G,X,\Sigma}$, i.e., $\un\cN^{\Eis}_{X,\Sigma}(k)$ consists of $(\cE,\ph)$ where $\cE$ is a $G$-torsor over $X$ and $\ph\in \cohog{0}{\cE(\frg^{*})\ot \om_{X}(\Sig)}$ is nilpotent (at the generic point).  

We show $\cN^{\Eis,B}_{X,\Sigma}(K)=\cN^{B}_{X,\Sigma}(K)$ for any algebraically closed field $K\supset k$. By base change it suffices to consider the case $K=k$. By definition, $\cN^{\Eis,B}_{X,\Sigma}(k)$ consists of $(\cE, \cE_{\Sig, B}, \ph)$ such that $(\cE, \ph)\in \Om_{G,X,\Sig}(k)$. But this is the same as requiring $\ph$ to be nilpotent, which shows that $\cN^{\Eis,B}_{X,\Sigma}(k)=\cN^{B}_{X,\Sig}(k)$. 
This shows that $\cN^{\Eis,B}_{X,\Sigma}=\cN^{B}_{X,\Sigma}$ as subsets of $T^{*}\Bun_{G, B}(X, \Sigma)$. In particular, $\cN^{\Eis,B}_{X,\Sigma}$ is closed.

Next we show that $\cN^{\Eis,B}_{X,\Sigma}$ is isotropic. For $\l\in \xcoch(T)$ let $\Bun_{B}^{\l}$ be the open closed substack of $\Bun_{B}$ where the induced $T$-bundle has degree $\l$. Let $\Bun^{\l}_{B,1}$ be the preimage of $\Bun^{\l}_{B}$ in $\Bun_{B,1}$. Restricting \eqref{BunB1 BunGB} we get a similar diagram
\begin{equation*}
\xymatrix{(B\bs G/B)^{\Sigma} & \ar[l]_-{\b^{\l}}\Bun^{\l}_{B,1}\twtimes{B^{\Sigma}}(G/B)^{\Sigma}\simeq\Bun^{\l}_{B}\times_{\Bun_{G}}\Bun_{G,B}\ar[r]^-{\a^{\l}} & \Bun_{G,B}}
\end{equation*}
Let $W^{\l}_{\Sig}=\orr{\a^{\l}}\oll{\b^{\l}}(T^{*}(B\bs G/B)^{\Sigma})$. The entire classical cotangent bundle $T^{*}(B\bs G/B)^{\Sigma}$ is isotropic. Indeed, it suffices to consider the case $|\Sigma|=1$, in which case $T^{*}(B\bs G/B)\cong B\bs \L$, where $\L\subset T^*(G/B)$ is the union of conormals to $B$-orbits, hence a Lagrangian. Therefore $W^{\l}_{\Sig}$ is also isotropic by Lemma \ref{l:pres iso}. Note that, even with Lemma \ref{l:nilcone B alt}, we cannot apply Lemma \ref{l:pres iso} directly to conclude that $\cN^{\Eis,B}_{X,\Sigma}$ is isotropic because $\a$ is not of finite type. By Lemma \ref{l:nilcone B alt}, $\cN^{\Eis,B}_{X,\Sigma}=\cup_{\l\in\xcoch(T)}W^{\l}_{\Sig}$. Let $U\subset \Bun_{G,B}$ be any open substack of finite type over $k$. Then $\cN^{\Eis,B}_{X,\Sigma}\cap T^{*}U=\cN^{B}_{X,\Sig}\cap T^{*}U$ is also of finite type over $k$, hence covered by finitely many $W^{\l}_{\Sig}$. Since each $W^{\l}_{\Sig}$ is isotropic, $\cN^{\Eis,B}_{X,\Sigma}\cap T^{*}U$ is isotropic; this being true for all $U\subset \Bun_{G,B}$, we conclude that $\cN^{\Eis,B}_{X,\Sigma}$ is isotropic.


%


It remains to show that $\cN^{\Eis,B}_{X,\Sigma}=\cN^{B}_{X,\Sigma}$ is a Lagrangian. Since $\cN^{B}_{X,\Sigma}$ is isotropic, by Lemma \ref{l:Lag crit}, to show it is a Lagrangian, it suffices to show that $\cN^{B}_{X,\Sigma}$ has dimension $\ge\dim\Bun_{G,B}(X,\Sigma)$ everywhere. When Assumption \eqref{hyp} holds and $G$ is semisimple, the dimension estimate follows from Lemma \ref{l:base dim}. Still assuming \eqref{hyp} but without assuming $G$ semisimple, we consider the smooth surjective map $p: \Bun_{G,N}(X,\Sigma)\to \Bun_{G,B}(X,\Sigma)$. We have $\cN^{N}_{X,\Sigma}=\oll{p}\cN^{B}_{X,\Sigma}$. Since $p$ is smooth, it suffices to show that $\cN^{N}_{X,\Sigma}$ has dimension $\ge\dim\Bun_{G,N}(X,\Sigma)$ everywhere, which follows from Lemma \ref{l:base dim}. In general, without any assumptions, we may enlarge $\Sigma$ to $\Sigma'\supset \Sigma$ so that \eqref{hyp} holds and consider the projection $\nu: \Bun_{G,B}(X,\Sig')\to \Bun_{G,B}(X,\Sig)$. It is easy to check that $\oll{\nu}\cN^{B}_{X,\Sigma}=\cN^{B}_{X,\Sigma'}$. Since $\nu$ is smooth, and we already proved that $\cN^{B}_{X,\Sigma'}$ is a Lagrangian in $T^*\Bun_{G,B}(X,\Sig')$, $\cN^B_{X,\Sig}$ is also a Lagrangian in $T^*\Bun_{G,B}(X,\Sig)$. This finishes the proof of the lemma. 
\end{proof}

\begin{cor}\label{c:NN} The subset $\cN^{\Eis,N}_{X, \Sigma}$ of $T^{*}\Bun_{G, N}(X, \Sigma)$ coincides with the underlying set of the global nilpotent cone $\cN^N_{X, \Sigma}$. Moreover, $\cN^{\Eis,N}_{X}$ is a closed conic Lagrangian in $T^{*}\Bun_{G, N}(X, \Sigma)$.
\end{cor}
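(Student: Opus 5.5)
We will deduce Corollary \ref{c:NN} from Proposition \ref{p:two cones B} by pulling back along the natural projection
\begin{equation*}
r: \Bun_{G,N}(X,\Sigma)\to \Bun_{G,B}(X,\Sigma),
\end{equation*}
which is a smooth surjective map, in fact an $H^{\Sigma}$-torsor. Since $r$ is smooth, the map $dr: \Bun_{G,N}\times_{\Bun_{G,B}}T^{*}\Bun_{G,B}\to T^{*}\Bun_{G,N}$ is a closed embedding. Consequently $\oll{r}$ sends closed conic Lagrangians to closed conic Lagrangians. This can be checked on a smooth atlas: the pullback of a conormal-type Lagrangian along a smooth map is again Lagrangian, with dimension increased by the relative dimension of $r$ on both sides.

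\textbf{Step 1: identify $\cN^{\Eis,N}_{X,\Sigma}$ with $\oll{r}\cN^{\Eis,B}_{X,\Sigma}$.} The projection $\Bun_{G,1}\to\Bun_{G,N}$ factors through $\Bun_{G,1}\to\Bun_{G,B}$ as $r\circ(\Bun_{G,1}\to \Bun_{G,N})$. Taking differentials, the maps $q$ and $q'$ satisfy $q'\circ dr = q$ on $\Bun_{G,N}\times_{\Bun_{G,B}}T^{*}\Bun_{G,B}$. We will check that the image of $dr$ is exactly $(q')^{-1}(\text{image of } q)$, with $q$ and $q'$ compatible fiberwise. Concretely, a point of $T^*\Bun_{G,N}$ is a Higgs field $\ph$ with $\Res_\Sigma\ph\in \cE_{\Sigma,N}(\frb)$. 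It lies in the image of $dr$ exactly when $\Res_\Sigma\ph\in\cE_{\Sigma,B}(\frn)$, which is the annihilator of $\frg/\frb$.

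A nilpotent $\ph$ has nilpotent residue lying in $\frb$, hence in $\frn$. Therefore $(q')^{-1}(\un\cN^{\Eis}_{X,\Sigma})$ lies in the image of $dr$. This gives $\cN^{\Eis,N}_{X,\Sigma}=dr\bigl(r^{*}\cN^{\Eis,B}_{X,\Sigma}\bigr)=\oll{r}\cN^{\Eis,B}_{X,\Sigma}$. The same residue argument also shows $\cN^{N}_{X,\Sigma}=\oll{r}\cN^{B}_{X,\Sigma}$ set-theoretically. The zero fiber of $H^N$ requires $a_\Sigma=0\in\frh$, which is equivalent to the residue lying in $\frn$ once $\ph$ is nilpotent. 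This identity was already used in the proof of Proposition \ref{p:two cones B}.

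\textbf{Step 2: conclude.} Combining Step 1 with Proposition \ref{p:two cones B} gives $\cN^{\Eis,N}_{X,\Sigma}=\oll{r}\cN^{B}_{X,\Sigma}=\cN^{N}_{X,\Sigma}$ as sets. Since $dr$ is a closed embedding and the base change of a closed subset is closed, $\cN^{\Eis,N}_{X,\Sigma}$ is closed and conic. Isotropy follows from Lemma \ref{l:pres iso}. For the dimension count, $\oll{r}\cN^B$ is a torsor-pullback of $\cN^B$, so its dimension everywhere is $\dim\cN^B+\dim H^{\Sigma}=\dim\Bun_{G,N}$. Lemma \ref{l:Lag crit} then gives that it is Lagrangian.

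\textbf{Main obstacle.} The only delicate point is Step 1: checking that every nilpotent Higgs field for the $N$-level structure actually lies in the image of $dr$. This amounts to the residue lying in $\frn$ rather than merely in $\frb$. We handle it by noting that the residue of a generically nilpotent section is nilpotent, and that a nilpotent element of $\frb$ lies in $\frn$.
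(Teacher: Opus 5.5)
Your proposal is correct and follows essentially the same route as the paper: pull back along the smooth projection $r\colon \Bun_{G,N}(X,\Sigma)\to\Bun_{G,B}(X,\Sigma)$, use $\cN^{N}_{X,\Sigma}=\oll{r}\cN^{B}_{X,\Sigma}$, and note that $\oll{r}$ preserves closed conic Lagrangians because $r$ is smooth. You also spell out two points the paper leaves implicit: the residue check (a nilpotent residue in $\frb$ lies in $\frn$, so everything lands in the image of $dr$), and the identification $\cN^{\Eis,N}_{X,\Sigma}=\oll{r}\cN^{\Eis,B}_{X,\Sigma}$.
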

\begin{proof}
Let $p: \Bun_{G,N}(X,\Sigma)\to \Bun_{G,B}(X,\Sigma)$ be the projection. We have $\cN^{N}_{X,\Sigma}=\oll{p}\cN^{B}_{X,\Sigma}$. Since $p$ is  smooth, $\oll{p}$ takes a closed conic Lagrangian to a closed conic Lagrangian. 
\end{proof}

\section{Universal nilpotent cone}

\subsection{Marked relative curves}\label{ss:rel curve}

Let $S$ be an algebraic stack locally of finite type over $k$, and $\pi:\frX\to S$ a proper smooth relative DM curve. Let $\frX^0 \subset \frX$ be the locus where $\pi$ is schematic and smooth. We assume $\frX^{0}$ is dense in each fiber of $\pi$. Let $\sigma\subset \frX^0$ be a closed substack that is finite \'etale over $S$. Let $\omega_\pi$ be the relative sheaf of $1$-forms along the fibers of $\pi$, which is a line bundle on $\frX$, and $\omega_\pi(\sigma)$ the twisting allowing simple poles along $\s$.

Given a map of stacks $s:S'\to  S$, with base change
$\pi_s:\frX_s = \frX \times_S S'\to S'$, $\sigma_s = \sigma \times_S S'$, we have  canonical isomorphisms $\omega_{\pi}|_{\frX_s} \simeq \omega_{\pi_s }$,
 $\omega_{\pi}(\sigma)|_{\frX_s} \simeq \omega_{\pi_s}(\sigma_s)$.
 Note also the canonical isomorphism $\omega_{\pi}(\sigma)|_\sigma \simeq \cO_\sigma$ given by the residue map.
 
At various places we will assume that the condition \eqref{hyp} is satisfied for each fiber of $(\frX, \s)\to S$.

\subsection{Relative moduli of bundles}\label{ss:rel Hitchin}
Consider the moduli stack $\Bun_{G, N}(\pi, \sigma)$ over $S$ whose $S'$-points 
classifies triples $(s, \cE, \cE_{\sigma, N})$ where $s:S'\to S$, $\cE$ is a $G$-bundle on $\frX_s = \frX \times_S S'$,
and $\cE_{\sigma, N}$ is an $N$-reduction of $\cE$ along $\sigma_s = \sigma \times_S S'$. Replacing $N$-reduction by $B$-reduction we get a similarly defined moduli stack $\Bun_{G, B}(\pi, \sigma)$ over $S$.

We have forgetful maps
\begin{equation}\label{forg level}
\xymatrix{\Bun_{G, N}(\pi, \sigma)\ar[r]^-{q} & \Bun_{G, B}(\pi, \sigma)\ar[r]^-{r} & \Bun_{G}(\pi)}
\end{equation}
where $q$ is a $R_{\s/S}(H)$-torsor (where $R_{\s/S}(-)$ stands for Weil restriction along the finite \'etale map $\s\to S$) and $r$ is a smooth proper \'etale locally trivial fibration whose fibers are products of flag varieties of $G$. 

Consider the Higgs  moduli stack
$
 \operatorname{Higgs}_{G, N}(\pi, \sigma)
 $
classifying data $(s, \cE, \cE_{\sigma, N}, \ph)$ of a triple $(s, \cE, \cE_{\sigma, N}) \in \Bun_{G, N}(\pi, \sigma)$ along with a Higgs field 
$$
\xymatrix{
\ph\in \upH^0(\frX_s, \cE(\frg)\otimes \omega_{\pi_s}(\sigma_s))
}
$$
such that its residue along $\s$, a priori a section of $\cE(\frg)|_{\s_{s}}$, satisfies
$$
\Res_{\s}(\ph) \in \upH^0(\sigma_s, \cE_{\sigma, N}(\frb)).
$$

Let $\Pi^{N}_{\s}: \Bun_{G, N}(\pi, \sigma)\to S$  denote the natural projection, which is smooth. Now assume $S$ is smooth over $k$, so that $\Bun_{G, N}(\pi, \sigma)$ is smooth over $k$. Let $T^{*}(\Pi^{N}_{\s})$ be the (classical) relative cotangent stack of $\Pi^{N}_{\s}$. Consider projection to the relative cotangent bundle
\begin{equation}\label{ppi}
P^{N}_{\pi,\s}:T^*\Bun_{G, N}(\pi, \sigma) \to  T^*(\Pi^{N}_{\s}). 
\end{equation}



Using the Killing form to identify $(\frg/\frn)^*$ with $\frb$,  Grothendieck-Serre  duality provides  
 an isomorphism
$$
\xymatrix{
 T^*(\Pi^{N}_{\s})   \simeq \operatorname{Higgs}_{G, N}(\pi, \sigma) 
}$$

Let $\frc_{\omega_{\pi}(\sigma)}$ denote the $\omega_\pi(\sigma)$-twist of $\frc$ (as a scheme over $\frX$ with $\Gm$-action), and  consider the Hitchin base $ A_{G, N}(\pi, \sigma) $ 
whose $S'$-points classify triples $(s, a, a_\sigma)$ where $s:S'\to S$, and
$a\in \upH^0(\frX_s,  \frc_{\omega_{\pi_s}(\sigma_s)})$ with a lift of its residue $a_\sigma\in \upH^0(\sigma_s, \frh)$.

Applying the characteristic polynomial map $\chi$ to the Higgs fields and projecting $\Res_{\s}(\ph)$ along $\frb\to \frh$, we obtain the Hitchin system
 $$
\xymatrix{
H^{N}_{\pi,\s}: T^*(\Pi^{N}_{\s})   \simeq  \operatorname{Higgs}_{G, N}(\pi, \sigma)  \ar[r] &  A_{G, N}(\pi, \sigma)
}
$$ 

Let $0_S\subset A_{G, N}(\pi, \sigma)$ be the fiberwise zero section over $S$, which is a copy of $S$.  The {\em relative universal nilpotent cone} is defined to be the preimage of $0_S$ under $H^{N}_{\pi,\s}$
$$
 \cN^{\rel,N}_{\pi,\s} = (H^{N}_{\pi,\s})^{-1}(0_S) \subset T^*(\Pi^{N}_{\s})
$$ 

Replacing $N$-reductions by $B$-reductions, we get the map $\Pi^{B}_{\s}: \Bun_{G,B}(\pi,\s)\to S$,  the Hitchin map $ T^{*}\Bun_{G,B}(\pi,\s)\to A_{G,B}(\pi,\s)$ and the relative global nilpotent cone
$$
 \cN^{\rel,B}_{\pi,\s} = (H^{B}_{\pi,\s})^{-1}(0_S) \subset T^*(\Pi^{B}_{\s}).
$$

\begin{lemma}\label{l:rel nilp cone} The relative global nilpotent cone $\cN^{\rel,N}_{\pi,\s}$ is a closed conic relative Lagrangian in $T^*(\Pi^{N}_{\s})$. The fiber of $\cN^{\rel,N}_{\pi,\s}$ over a geometric point $s\in S$ is the global nilpotent cone for $\Bun_{G,N}(X_{s},\s_{s})$ as defined in \S\ref{sss:glob nilp cone}. Similar statements hold for $\cN^{\rel,B}_{\pi,\s}$.
\end{lemma}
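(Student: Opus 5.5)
The plan is to check the three defining conditions of a closed conic relative Lagrangian one at a time, and to do the dimension count by a family version of the argument in Lemma \ref{l:base dim}. Two of the conditions are formal. \emph{Closedness and conicity:} $\cN^{\rel,N}_{\pi,\s}$ is the preimage of the closed substack $0_S\subset A_{G,N}(\pi,\s)$ under the Hitchin map $H^N_{\pi,\s}$. The Hitchin map is $\Gm$-equivariant for the scaling action on Higgs fields and the weighted action on $A_{G,N}(\pi,\s)$, and $0_S$ is $\Gm$-stable. So the preimage is closed and conic.

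\emph{Fiber identification.} Formation of $\operatorname{Higgs}_{G,N}(\pi,\s)$, of $A_{G,N}(\pi,\s)$ and of the Hitchin map commutes with base change along a geometric point $s\to S$. This uses $\om_\pi(\s)|_{\frX_s}\simeq\om_{\pi_s}(\s_s)$ and the compatibility of the residue map from \S\ref{ss:rel curve}. Relative Grothendieck--Serre duality likewise restricts to Serre duality on the fiber, so $T^*(\Pi^N_\s)_s\simeq T^*\Bun_{G,N}(\frX_s,\s_s)$. Hence the fiber of $\cN^{\rel,N}_{\pi,\s}$ over $s$ is $(H^N_{\frX_s,\s_s})^{-1}(0)=\cN^N_{\frX_s,\s_s}$. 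Here $\frX_s$ is a DM curve that is generically a scheme, and $\s_s$ consists of points with trivial automorphisms, as required in \S\ref{sss:glob nilp cone}. By Corollary \ref{c:NN}, its reduced structure is a closed conic Lagrangian in $T^*\Bun_{G,N}(\frX_s,\s_s)$. This is the first condition of a relative Lagrangian.

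\emph{Equidimensionality} is the step I expect to be the main obstacle. Since $\Pi^N_\s$ is smooth, we may work locally on $S$ and assume $S$ is irreducible of dimension $d$ and $\Bun_{G,N}(\pi,\s)$ has relative dimension $n$. Every fiber $\cN^N_{\frX_s,\s_s}$ then has pure dimension $n$, so every irreducible component of $\cN^{\rel,N}_{\pi,\s}$ has dimension at most $n+d=\dim\Bun_{G,N}(\pi,\s)$.

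For the lower bound I would use the family Hitchin map. Assume first that \eqref{hyp} holds fiberwise. Then the $H^1$ of the relevant powers of $\om_{\pi_s}(\s_s)$ vanishes, so $A_{G,N}(\pi,\s)\to S$ is a vector bundle (times $\frh$ over $\frc$) of rank $n$ by Lemma \ref{l:base dim}. Hence $0_S$ is cut out locally by $n$ equations. Its preimage in the smooth stack $\operatorname{Higgs}$ has local dimension at least $\dim T^*(\Pi^N_\s)-n$ at every point, where dimension is computed on a smooth chart. The subtlety is that the classical stack $T^*(\Pi^N_\s)$ may have excess dimension, so this bound is not immediately $n+d$.

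To get around this, I would instead argue via fibers: choose a chart, use upper semicontinuity of fiber dimension, and combine it with the fact that each fiber component has dimension exactly $n$. Concretely, I would show that every irreducible component of $\cN^{\rel,N}_{\pi,\s}$ dominates $S$. Suppose a component did not dominate $S$. Then its fibers over the image $S_0$ would have dimension at least $\dim(\text{component})-\dim S_0$. An alternative route for the lower bound is to deform: via Ginzburg's description (Proposition \ref{p:two cones B}), nilpotent Higgs fields are parametrized by $B$-reductions with fields in $\frn$, and the corresponding family construction over $S$ has fibers of dimension $n$ over every point of $S$. This gives local dimension at least $n+d$ everywhere.

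If \eqref{hyp} fails, add an auxiliary finite étale multisection locally on $S$, exactly as in the last step of Proposition \ref{p:two cones B}, and pull back along the smooth forgetful map. The $B$-case follows from the $N$-case via the smooth surjection $q$ in \eqref{forg level}, as in Corollary \ref{c:NN}.
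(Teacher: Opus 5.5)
Your treatment of closedness, conicity, the fiber identification, the upper bound $\dim\cN^{\rel,N}_{\pi,\s}\le n+d$, the removal of \eqref{hyp} by adding multisections, and the passage between $N$ and $B$ all match the paper. The gap is the lower bound on the dimension of each irreducible component. This is the only real content of the lemma, and neither route you settle on provides one.

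Knowing that every fiber over $s\in S$ is pure of dimension $n$ gives, for a component $C$ lying over a proper closed $S_0\subsetneq S$, only $\dim C\le n+\dim S_0$. That is no contradiction. A priori a special fiber could contain components that are not limits of components of nearby fibers, and these would produce components of $\cN^{\rel,N}_{\pi,\s}$ of too small dimension. Excluding this is precisely what equidimensionality asserts, so your ``every component dominates $S$'' step presupposes what is to be shown.

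The Ginzburg-style alternative has the same defect: ``fibers of dimension $n$ over every point of $S$'' is an upper bound on local dimension, not a lower one. A Riemann--Roch count on pairs $(\cE_B,\ph)$ with $\ph\in\upH^0(\cE_B(\frn)\ot\om)$ (say for $\s=\vn$) does give local dimension $\ge n+d$ upstairs. However, the projection to $\cN^{\rel}$ has positive-dimensional fibers (the $B$-reductions compatible with $\ph$), so that bound does not descend.

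The Hitchin-map route you started on is the paper's route, and you abandoned it over a misdiagnosed obstacle. $\operatorname{Higgs}_{G,N}(\pi,\s)$ is not smooth, and excess dimension of $T^*(\Pi^N_\s)$ would only help a lower bound. What is missing is the fact that every irreducible component of the classical stack $T^*(\Pi^N_\s)$ has dimension at least $2n+d$. This holds because $T^*(\Pi^N_\s)$ is the classical truncation of the derived relative cotangent bundle of the smooth map $\Pi^N_\s$. That derived stack is quasi-smooth of virtual dimension $(n+d)+n$, and no component of such a truncation has dimension below the virtual dimension.

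Under \eqref{hyp}, the zero section $0_S$ is a local complete intersection of codimension $n$ in $A_{G,N}(\pi,\s)$. Hence every component of $(H^N_{\pi,\s})^{-1}(0_S)$ has dimension $\ge n+d$, and together with your upper bound this gives equidimensionality.
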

\begin{proof} We prove the statements for $\cN^{\rel,N}_{\pi,\s}$ and the statements for $\cN^{\rel,B}_{\pi,\s}$ are proved similarly.  

Abbreviate $\cN^{\rel,N}_{\pi,\s}$ by $\cN^{\rel}$. All statements are clear except possibly for the equidimensionality of $\cN^{\rel}$. By the fiberwise description of $\cN^{\rel}$ we see that $\dim \cN^{\rel}\le \dim \Pi+\dim S=\dim \Bun_{G,N}(\pi,\s)$. To show the inequality in the other direction, we first assume \eqref{hyp} holds fiberwise. Consider the dimensions of the source and target of the relative Hitchin map $H=H^{N}_{\pi,\s}$. The stack $T^*(\Pi)$, being the classical stack of the derived cotangent bundle of the smooth $\Bun_{G,N}(\pi,\s)$, has dimension $\ge 2\dim \Bun_{G,N}(\pi,\s)-\dim S=\dim \Bun_{G,N}(\pi,\s)+\dim \Pi$. Under the fiberwise assumption \eqref{hyp}, the Hitchin base $A_{G,N}(\pi,\s)$ is smooth over $S$ with relative dimension equal to the relative dimension of $\Pi$ by Lemma \ref{l:base dim}, hence $0_S$ is a local complete intersection in $A_{G,N}(\pi,\s)$ of codimension $\dim \Pi$. Thus we conclude that each irreducible component of $\cN^{\rel}=H^{-1}(0_S)$ has dimension $\ge \dim \Bun_{G,N}(\pi,\s)+\dim \Pi-\dim \Pi=\dim \Bun_{G,N}(\pi,\s)$. Combined with the upper bound we conclude that $\cN^{\rel}$ is equidimensional with dimension equal to $\dim \Bun_{G,N}(\pi,\s)$.

Finally we drop the fiberwise assumption \eqref{hyp}. For the purpose of proving dimension lower bound, we are free to localize $S$ in the \'etale topology. Therefore we may add finite \'etale multi-sections to the family $\pi$ away from $\s$, and enlarge $\s$ to $\s'$ so that \eqref{hyp} holds fiberwise. The smooth surjection $\nu: \Bun_{G,N}(\pi,\s')\to \Bun_{G,N}(\pi,\s)$ induces a relative transport operation $\oll{\nu}^{\rel}$ between constructible subsets of cotangent bundles (see \S\ref{sss:rel transport}), so that $\oll{\nu}^{\rel}\cN^{\rel,N}_{\pi,\s}=\cN^{\rel,N}_{\pi,\s'}$. Since $\nu$ is smooth, and $\cN^{\rel,N}_{\pi,\s'}$ is equidimensional with the same dimension as $\Bun_{G,N}(\pi,\s')$ by the prior paragraph, we conclude that $\cN^{\rel,N}_{\pi,\s}$ is also equidimensional with the same dimension as $\Bun_{G,N}(\pi,\s)$.
\end{proof}


By Proposition \ref{p:lifting Lag}, we have a unique lifting $\cN^{N}_{\pi,\s}$ of $\cN^{\rel,N}_{\pi,\s}$ to $T^{*}\Bun_{G,N}(\pi,\s)$ which is a closed conic Lagrangian. However, the construction of the lifting in {\em loc. cit.} is not explicit enough as it involves taking closure. Our goal is to give an explicit construction of $\cN^{N}_{\pi,\s}$ as the family version of the Eisenstein cone reviewed in \S\ref{ss:Eis}, which allows us to prove stronger properties for the lifting.

\subsection{Universal Eisenstein cone}\label{ss:Eis} 

We first consider the case where $\s=\vn$. By the fiberwise assumption \eqref{hyp}, $G$ is semisimple and $\frX$ is hyperbolic fiberwise. We denote $\Pi: \Bun_{G}(\pi)\to S$ and $P_{\pi}:  T^{*}\Bun_{G}(\pi)\to  T^*(\Pi)$ the natural projections. Denote the relative Hitchin map in this case by $H_{\pi}$, and the global relative nilpotent cone by $\cN^{\rel}_\pi = H^{-1}_{\pi}(0_{S}) \subset T^*(\Pi)$. Denote the unique closed conic Lagrangian lifting of $\cN^{\rel}_{\pi}$ by $\cN_{\pi}\subset T^{*}\Bun_{G}(\pi)$.

Let $\frp: \Bun_{B}(\pi)\to \Bun_{G}(\pi)$ be the induction map. Note that $\Bun_{B}(\pi)\to S$ is also smooth. We have the  correspondence
\begin{equation}\label{Lag frp}
\xymatrix{T^{*}\Bun_{B}(\pi) & \Bun_{B}(\pi)\times_{\Bun_{G}(\pi)}T^{*}\Bun_{G}(\pi)\ar[l]_-{d\frp}\ar[r]^-{\frp^{\na}} & T^{*}\Bun_{G}(\pi)
}
\end{equation}

\begin{defn}\label{d:univ nilp cone} Define the {\em universal Eisenstein cone} $\cN^{\Eis}_{\pi}\subset T^{*}\Bun_{G}(\pi)$ to be
\begin{equation*}
\cN^{\Eis}_{\pi}=\orr{\frp}(0_{\Bun_{B}(\pi)})=\frp^{\na}(d\frp)^{-1}(0_{\Bun_{B}(\pi)}).
\end{equation*}
\end{defn}

The main result of this subsection is:
\begin{theorem}\label{th:Eis cone closed} Assume $G$ is semisimple and \eqref{hyp} holds fiberwise in the situation $\s=\vn$ \footnote{These assumptions will be removed in the more general Theorem \ref{th:univ cone Iw}.}. Then
\begin{enumerate}
    \item The universal Eisenstein cone $\cN^{\Eis}_{\pi}$ is a closed conic Lagrangian inside $T^{*}\Bun_{G}(\pi)$. 
    \item The projection $P_{\pi}$ restricts to a universal homeomorphism $P^{\Eis}_\pi: \cN^{\Eis}_{\pi}\to \cN^{\rel}_{\pi}$.
    \item In particular, $\cN^{\Eis}_{\pi}$ is the unique lifting of the relative nilpotent cone $\cN^{\rel}_{\pi}$ as in Proposition \ref{p:lifting Lag}.
\end{enumerate}
\end{theorem}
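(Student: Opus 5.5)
The plan is to work pointwise first, then pass to dimension and isotropy arguments. Fix a geometric point $(s,\cE,\Phi)$ of $T^*\Bun_G(\pi)$ over $s\in S$, where $\cE$ is a $G$-bundle on $\frX_s$. The cotangent complex of $\Bun_G(\pi)$ sits in a triangle with $\Pi^*\LL_S$ and $\LL_\Pi$. Taking $H^0$ gives the exact sequence
\begin{equation*}
0\to T^*_sS\to T^*_{(s,\cE)}\Bun_G(\pi)\xr{P_\pi} \cohog{0}{\frX_s,\cE(\frg)\ot\om}\to 0 .
\end{equation*}
The left map is injective because $\cohog{1}{\frX_s,\cE(\frg)}$ is the relative tangent space and $\Pi$ is smooth. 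The same sequence exists for $\Bun_B(\pi)$ at a $B$-reduction $\cE_B$, with $\frb^*$ in place of $\frg$, and $d\frp$ is compatible with both sequences. So $\Phi\in\cN^{\Eis}_\pi$ iff some $\cE_B$ exists with $d\frp(\Phi)=0$. Then $\Phi$ has zero image in $T^*_{\cE_B}\Bun_B(\pi)$, so its image $\ph=P_\pi(\Phi)$ lies in $\ker(\cohog{0}{\cE(\frg)\ot\om}\to\cohog{0}{\cE_B(\frb^*)\ot\om})=\cohog{0}{\cE_B(\frn)\ot\om}$. Its $T^*_sS$-component must vanish, because $T^*_sS\to T^*_{\cE_B}\Bun_B(\pi)$ is injective, $\Bun_B(\pi)\to S$ being smooth.

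Consequently, over each fiber, $\cN^{\Eis}_\pi\cap(\text{fiber over }(s,\cE))$ consists of covectors $\Phi$ with $P_\pi(\Phi)\in\cN_{\frX_s}$ and $\Phi$ equal to the distinguished lift determined by $\cE_B$. The key claim, and the main obstacle, is that this lift does not depend on the choice of $\cE_B$ with $\ph\in\cohog{0}{\cE_B(\frn)\ot\om}$. Concretely, the subspace $d\frp^{-1}(0)\cap P_\pi^{-1}(\ph)$ at $\cE_B$ should be the same point for all admissible $\cE_B$. I would try to prove this by canonicalizing the choice. By Lemma \ref{l:two cones} and its proof, any such $\cE_B$ can be compared with the canonical Jacobson--Morozov parabolic reduction $\cE_P$. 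Then
\begin{equation*}
\Bun_B(\pi)\to\Bun_P(\pi)\to\Bun_G(\pi)
\end{equation*}
factors the relevant lift through the point of $\Bun_P(\pi)$ given by $\cE_P$. If $\ph\in\cE_B(\frn)$ then $\cE_B$ refines $\cE_P$ generically, hence everywhere by saturation. The lift coming from $\cE_B$ then equals the lift from $\cE_P$, since $d$ of $\Bun_B(\pi)\to\Bun_P(\pi)$ is injective on the relevant $S$-components. Thus $P^{\Eis}_\pi$ is injective on geometric points. Surjectivity onto $\cN^{\rel}_\pi$ follows from the existence of $\cE_B$ (Lemma \ref{l:two cones}) together with the surjectivity of the exact sequence at $\cE_B$, which lets us lift $\ph$ with zero image. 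This gives a bijection on geometric points for all field extensions, hence a universally bijective map.

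For closedness and constructibility, I would restrict to finite type opens $U\subset\Bun_G(\pi)$. The degrees $\l$ of reductions $\cE_B$ needed over $U$ are bounded, for instance by using the canonical $P$-reduction and boundedness of Harder--Narasimhan type, so only finitely many $\Bun^\l_B(\pi)$ contribute. Over $U$, $\cN^{\Eis}_\pi$ is therefore a finite union of images of finite type maps, hence constructible. I would prove closedness by the valuative criterion: a specialization of points in $\cN^{\Eis}_\pi$ lifts because the map is bijective onto the closed set $\cN^{\rel}_\pi$, and the $B$-reductions extend over a DVR by properness of $\Bun_B^{\le\l}\to\Bun_G$ on the relevant bounded pieces (saturating reductions). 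Being proper and bijective onto $\cN^{\rel}_\pi$, $P^{\Eis}_\pi$ is then a universal homeomorphism, which is (2).

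Isotropy follows from Lemma \ref{l:pres iso} applied to each finite type piece $\frp^\l$, exactly as in Proposition \ref{p:two cones B}. The zero section is isotropic. Since $\cN^{\Eis}_\pi$ is homeomorphic to $\cN^{\rel}_\pi$, Lemma \ref{l:rel nilp cone} gives equidimensionality of dimension $\dim\Bun_G(\pi)$, so Lemma \ref{l:Lag crit} yields that it is Lagrangian, proving (1). For (3), every component of $\cN^{\Eis}_\pi$ maps onto a component of $\cN^{\rel}_\pi$ via the homeomorphism, so the uniqueness in Proposition \ref{p:lifting Lag} identifies it with $\cN_\pi$.
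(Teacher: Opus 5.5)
Your surjectivity step (the snake-lemma lift, which is equivalent to the paper's use of Lemma \ref{l:Cart Lag}), the isotropy via finitely many $\cW^{\l}$, the dimension count and part (3) are fine. The injectivity step, however, rests on a false claim. It is not true that every $B$-reduction $\cE_B$ with $\ph\in\cohog{0}{\frX_s,\cE_B(\frn)\ot\om}$ refines the Jacobson--Morozov reduction $\cE_P$. At the generic point, that claim says the Springer fiber of $e=\ph_\eta$ is just $\{B\subset P_e\}$. This already fails for $\mathrm{SL}_3$ and $e=E_{13}$: there $P_e$ is the upper triangular Borel itself, while the Springer fiber is two $\PP^1$'s (for instance the flag $\langle e_1\rangle\subset\langle e_1,e_2+e_3\rangle$ is also admissible). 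So factoring once through $\Bun_P(\pi)$ only identifies the lifts attached to $B$-reductions lying inside $\cE_P$.

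The missing idea is the Spaltenstein-type chain used in the paper. Take two admissible reductions $\cE_B,\cE'_B$ in generic relative position $w$, and the unique minimal gallery $B=B_0,\dots,B_\ell=B'$ of type a reduced word for $w$. By uniqueness this gallery is defined over $F=k(\frX_s)$ and fixed by $\exp(\ph_\eta)$, so every $B_j$ is admissible. Moreover $\frn_{B_{j-1}}\cap\frn_{B_j}=\frn_{P_j}$ for the minimal parabolic $P_j$ containing both. Your factorization trick, applied through $\Bun_{P_j}(\pi)$ at each step, then transports $d\frp(\wt\ph)=0$ from $\cE_B$ to $\cE'_B$.

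Closedness is also not established. Bijectivity onto the closed set $\cN^{\rel}_\pi$ does not make specializations lift. The closure of $\cN^{\Eis}_\pi$ is the lifting of Proposition \ref{p:lifting Lag}, and a lifting need not be injective over the relative Lagrangian in general (see the Example after that proposition). So closedness is precisely the nontrivial point. Also, $\Bun^{\l}_B(\pi)\to\Bun_G(\pi)$ is not proper. Over a DVR the $B$-reduction degenerates to a non-saturated one, i.e. a point of a compactification $\ov\Bun^{\l}_B(\pi)$. There the limiting covector is only known to be killed by $d\ov\frp^{\l}$. What must be shown is that it is then killed by $d\frp^{\l'}$ at the saturated reduction of degree $\l'\ge\l$, i.e. $\ov\cW^{\l}\subset\bigcup_{\l'\ge\l}\cW^{\l'}$.

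The paper proves this for $\GL_n$, where Laumon's compactification is smooth, by building a stack $Z$ smooth over $\Bun^{\l'}_B(\pi)$ and mapping to $\ov\Bun^{\l}_B(\pi)$ through the given point (Lemma \ref{l:ov W}). It then reduces general $G$ to $\GL_n$ via the wrong-way map of Lemma \ref{l:Eis GG'}. Your sketch supplies neither step. Consequently the properness of $P^{\Eis}_\pi$ you invoke for (2) is also unjustified, since it can only follow once closedness is known.
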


We deduce some consequences before giving the proof.

\begin{cor}\label{c:univ cone non-char} The universal Eisenstein cone $\cN^{\Eis}_{\pi}$ is non-characteristic with respect to the projection $\Pi:\Bun_{G}(\pi)\to S$, i.e. its intersection with  $d\Pi(T^*S) \subset T^* \Bun_{G}(\pi)$ lies in the zero-section.
\end{cor}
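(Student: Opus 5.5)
The corollary should follow formally from Theorem \ref{th:Eis cone closed}(2), once we see what it says about the covectors in question. The conormal image $d\Pi(T^*S)$ is, fiberwise, exactly the kernel of the projection $P_\pi: T^*\Bun_G(\pi)\to T^*(\Pi)$. This comes from the exact sequence of cotangent complexes for the smooth map $\Pi$, which in degree $0$ reads
\begin{equation*}
0\to \Pi^*T^*S\to T^*\Bun_G(\pi)\to T^*(\Pi)\to 0.
\end{equation*}
It is left exact because $\Pi$ is smooth, so the relative cotangent complex sits in degrees $\ge 0$ with no negative-degree contribution killing $\Pi^*\LL_S$ in $H^0$. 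In fact only the inclusion $d\Pi(T^*S)\subset\ker P_\pi$ is needed, and this holds simply because $\Pi^*\LL_S\to \LL_{\Bun_G(\pi)}\to \LL_\Pi$ composes to zero.

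Now take a geometric point $\xi\in \cN^{\Eis}_\pi\cap d\Pi(T^*S)$ lying over $\cE\in\Bun_G(\pi)$. Then $P_\pi(\xi)=0$ in the fiber of $T^*(\Pi)$ at $\cE$. The zero covector $0_\cE$ also lies in $\cN^{\Eis}_\pi$: take any $B$-reduction of $\cE$ (one exists over an algebraically closed field), and the zero covector on $\Bun_G(\pi)$ pulls back to the zero covector on $\Bun_B(\pi)$. Moreover $P_\pi(0_\cE)=0$.

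By Theorem \ref{th:Eis cone closed}(2), $P^{\Eis}_\pi$ is a universal homeomorphism, so it is injective on geometric points. Since $\xi$ and $0_\cE$ have the same image, $\xi=0_\cE$, and $\xi$ lies in the zero section. Because this argument works for all geometric points, the set-theoretic intersection lies in the zero section, which is the non-characteristic condition.

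The one step needing care is that injectivity on geometric points is applied over the stack $\Bun_G(\pi)$, where points carry automorphisms. The fix is to work after a smooth base change $M\to \Bun_G(\pi)$ from a scheme, or simply fiberwise over a fixed $K$-point $\cE$. The fiber of $P^{\Eis}_\pi$ over $(\cE,0)$ is $\cN^{\Eis}_\pi\cap \ker(P_\pi)_\cE$, and universal homeomorphism forces this fiber to be a single point, hence the origin. I expect no real obstacle; all the substance is in Theorem \ref{th:Eis cone closed}.
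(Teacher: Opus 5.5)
Your proposal is correct and is essentially the paper's own proof. A covector in $d\Pi(T^*S)$ projects to zero in $T^*(\Pi)$, the zero section lies in $\cN^{\Eis}_\pi$ because $\frp$ is surjective, and injectivity of $P^{\Eis}_\pi$ on geometric points then forces the covector to be zero. Your stacky worry is already handled by Lemma \ref{l:NEis bij Nrel}, which proves this injectivity within a fixed cotangent fiber at $(s,\cE)$.
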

\begin{proof} Suppose $\xi$ is a $k$-point of $\cN^{\Eis}_{\pi}\cap d\Pi(T^*S)$ over $(s,\cE)\in \Bun_{G}(\pi)(k)$, where $s\in S(k)$. Then $\xi$ maps to the zero covector $(s,\cE,0)$ in the relative cotangent $T^{*}(\Pi)$. On the other hand, $0_{\Bun_{G}(\pi)}\subset \cN^{\Eis}_{\pi}$ because $\frp$ is surjective. Therefore $\xi$ and $(s,\cE,0)$ have the same image in $T^{*}(\Pi)$.  By Theorem \ref{th:Eis cone closed}(2), we must have $\xi=(s,\cE,0)$, i.e., $\cN^{\Eis}_{\pi}\cap d\Pi(T^*S)$ lies in the zero-section.
\end{proof}

\sss{Compatibility with base change}\label{sss:bc}
Let $S'$ be a stack smooth over $k$, and $t: S'\to S$ be a map of stacks. Let $\pi':\frX'=S'\times_{S}\frX\to S'$ be the base change of $\pi$. Note the natural identification
$S'\times_{S}\Bun_{G}(\pi)\simeq\Bun_{G}(\pi')$. Let
\begin{equation*}
\xymatrix{
\th: \Bun_{G}(\pi')\ar[r] &  \Bun_{G}(\pi)
}
\end{equation*}
be the natural projection. Passing to cotangent bundles we have the map
\begin{equation*}
\xymatrix{
d\th: \Bun_{G}(\pi')\times_{\Bun_{G}(\pi)}T^{*}\Bun_{G}(\pi)\cong S'\times_{S} T^{*}\Bun_{G}(\pi) \ar[r] &  T^{*}\Bun_{G}(\pi').
}
\end{equation*}

\begin{lemma}\label{l:Eis cone bc} The map $d\th$ restricts to a universal homeomorphism $S'\times_{S}\cN^{\Eis}_{\pi}\isom \cN^{\Eis}_{\pi'}$. In particular, we have $\cN^{\Eis}_{\pi'}=\oll{\th}(\cN^{\Eis}_{\pi})$.
\end{lemma}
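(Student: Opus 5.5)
We will deduce the statement from Theorem \ref{th:Eis cone closed}(2) applied to both $\pi$ and $\pi'$, together with the fact that the relative nilpotent cone is compatible with base change. The assumptions of Theorem \ref{th:Eis cone closed} are fiberwise conditions, so they hold for $\pi'$ whenever they hold for $\pi$. Consider the commutative square
\begin{equation*}
\xymatrix{ S'\times_{S}\cN^{\Eis}_{\pi}\ar[r]^-{d\th}\ar[d]_{\id\times P^{\Eis}_{\pi}} & T^{*}\Bun_{G}(\pi')\ar[d]^{P_{\pi'}}\\
S'\times_{S}\cN^{\rel}_{\pi}\ar[r]^-{\sim} & T^{*}(\Pi')}
\end{equation*}
It commutes because the relative cotangent stack satisfies $T^{*}(\Pi')\cong S'\times_{S}T^{*}(\Pi)$, and $P_{\pi'}\circ d\th$ is the base change of $P_\pi$. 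Since Higgs bundles and the Hitchin map commute with base change, $S'\times_S\cN^{\rel}_\pi=\cN^{\rel}_{\pi'}$ as closed substacks, at least set-theoretically, which is all we need.

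The first step is to show that $d\th$ maps $S'\times_S\cN^{\Eis}_\pi$ into $\cN^{\Eis}_{\pi'}$. Take a point of $S'\times_S\cN^{\Eis}_\pi$ over $s'\mapsto s$. It is a covector $\xi$ at $(s,\cE)$ for which there is a $B$-reduction $\cE_B$ with $d\frp(\xi)=0$ in $T^*\Bun_B(\pi)$. The square formed by $\Bun_B(\pi')\to\Bun_B(\pi)$, $\Bun_G(\pi')\to\Bun_G(\pi)$ and the two induction maps is Cartesian, since $\Bun_B(\pi')=S'\times_S\Bun_B(\pi)$. By functoriality of differentials, $d\frp'(d\th(\xi))$ equals the pullback of $d\frp(\xi)=0$, so it vanishes. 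Thus $d\th(\xi)\in\orr{\frp'}(0)$. The same conclusion also follows formally from Lemma \ref{l:Cart Lag} applied to this Cartesian square, which gives $\oll{\th}\orr{\frp}(0)=\orr{\frp'}\oll{\th_B}(0)=\orr{\frp'}(0)$. That identity already proves the second assertion, $\cN^{\Eis}_{\pi'}=\oll{\th}(\cN^{\Eis}_\pi)$, and in particular surjectivity onto $\cN^{\Eis}_{\pi'}$. One caveat is that $\frp$ is not of finite type, but the proof of Lemma \ref{l:Cart Lag} is pointwise and does not use this.

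The second step is to show that the restricted map $S'\times_S\cN^{\Eis}_\pi\to\cN^{\Eis}_{\pi'}$ is a universal homeomorphism. In the square above, the left vertical map is the base change of the universal homeomorphism $P^{\Eis}_\pi$, so it is again a universal homeomorphism. By Theorem \ref{th:Eis cone closed}(2) for $\pi'$, the map $P_{\pi'}$ restricted to $\cN^{\Eis}_{\pi'}$ is a universal homeomorphism onto $\cN^{\rel}_{\pi'}$, and the bottom map is an isomorphism onto $\cN^{\rel}_{\pi'}$. Consequently the composite $S'\times_S\cN^{\Eis}_\pi\to\cN^{\Eis}_{\pi'}\to\cN^{\rel}_{\pi'}$ is a universal homeomorphism, and so is the second map. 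Given a composite $g\circ f$ in which $g\circ f$ and $g$ are universal homeomorphisms, $f$ is one too. Indeed, $f$ is bijective on geometric points, and it is integral and universally closed by the standard cancellation properties, because $g$ is separated.

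The only real obstacle is bookkeeping. One must verify that the identification $S'\times_S\cN^{\rel}_\pi\simeq\cN^{\rel}_{\pi'}$ holds on underlying reduced stacks, and that the cancellation property for universal homeomorphisms applies to the stacks in question; for the latter we would reduce to smooth atlases of $\Bun_G(\pi')$.
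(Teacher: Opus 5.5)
Your proposal is correct and follows essentially the same route as the paper: a diagram chase using $d\frp'\circ d\th=d\th_B\circ d\frp$ shows the image lands in $\cN^{\Eis}_{\pi'}$, and then comparing with $\cN^{\rel}_{\pi}$ and $\cN^{\rel}_{\pi'}$ via Theorem~\ref{th:Eis cone closed}(2) lets you cancel universal homeomorphisms. Your extra appeal to Lemma~\ref{l:Cart Lag} for $\oll{\th}(\cN^{\Eis}_{\pi})=\cN^{\Eis}_{\pi'}$ is a valid shortcut; the paper instead obtains this identity from the surjectivity of the universal homeomorphism.
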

\begin{proof}
Let $\frp': \Bun_{B}(\pi')\to \Bun_{G}(\pi')$ be the induction map for the family $\pi'$. We have a commutative diagram where both squares are Cartesian
\begin{equation*}
\xymatrix{      S'\times_{S}T^{*}\Bun_{G}(\pi) \ar[r]^-{d\th} &  T^{*}\Bun_{G}(\pi')  \\
\Bun_{B}(\pi')\times_{\Bun_{G}(\pi)}T^{*}\Bun_{G}(\pi) \ar[d]^{\id_{S'}\times d\frp}\ar[u]_{\id_{S'}\times\frp^{\na}} \ar[r]^-{d\th|_{\Bun_{B}(\pi')}} &  \Bun_{B}(\pi')\times_{\Bun_{G}(\pi')}T^{*}\Bun_{G}(\pi')\ar[d]^{d\frp'}\ar[u]_{\frp'^{\na}} \\
 S'\times_{S}T^{*}\Bun_{B}(\pi) \ar[r]^-{d\th_{B}} &  T^{*}\Bun_{B}(\pi')  }
\end{equation*}
Since $S'\times_{S}0_{\Bun_{B}(\pi)}$ maps bijectively to $0_{\Bun_{B}(\pi')}$ as subsets of the bottom row, a diagram chasing shows that $S'\times_{S}\cN^{\Eis}_{\pi}$ maps into $\cN^{\Eis}_{\pi'}$. 

We then have a commutative diagram
\begin{equation*}
\xymatrix{S'\times_{S}\cN^{\Eis}_{\pi}\ar[d]_{\id_{S'}\times P_{\pi}} \ar[r]^-{d\th^{\Eis}}  & \cN^{\Eis}_{\pi'}\ar[d]^{P_{\pi'}}\\
S'\times_{S}\cN^{\rel}_{\pi} \ar[r]^-{d\th^{\rel}} & \cN^{\rel}_{\pi'}}
\end{equation*}
Equip all stacks above with the reduced structure. The map $d\th^{\rel}$ is clearly an isomorphism. By Theorem \ref{th:Eis cone closed}, both vertical maps are universal homeomorphisms, hence the same is true for $d\th^{\Eis}: S'\times_{S}\cN^{\Eis}_{\pi}\to \cN^{\Eis}_{\pi'}$.
\end{proof}

The rest of the subsection is devoted to the proof of Theorem \ref{th:Eis cone closed}.

\begin{lemma}\label{l:NEis bij Nrel}
The projection $P_{\pi}$ maps $\cN^{\Eis}_{\pi}$ to $\cN^{\rel}_{\pi}$, inducing a bijection on geometric points. 
\end{lemma}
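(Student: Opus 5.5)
The plan is to work pointwise over geometric points of $S$ and reduce to the single-curve statement, Lemma \ref{l:two cones}. Let $(s,\cE,\Phi)$ be a geometric point of $T^{*}\Bun_{G}(\pi)$, where $s\in S(K)$ and $\cE$ is a $G$-bundle on $X:=\frX_{s}$. Let $\ph=P_{\pi}(\Phi)$ be its image in the fiber of $T^{*}(\Pi)$, which is a Higgs field in $\cohog{0}{X,\cE(\frg)\ot\om_{X}}$.

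\textbf{Step 1: the image of $\cN^{\Eis}_{\pi}$ lies in $\cN^{\rel}_{\pi}$.} Suppose $(s,\cE,\Phi)\in\cN^{\Eis}_{\pi}$. By definition there is a $B$-reduction $\cE_{B}$ of $\cE$ with $d\frp(\Phi)=0$ in $T^{*}_{\cE_{B}}\Bun_{B}(\pi)$. The map $\frp$ is over $S$, so the projections to relative cotangent spaces are compatible with $d\frp$. Hence the relative differential $d\frp^{\rel}(\ph)=0$ in $T^{*}(\Bun_{B}(\pi)\to S)$. By the computation in the proof of Lemma \ref{l:two cones}, this says $\ph\in\cohog{0}{X,\cE_{B}(\frn)\ot\om_{X}}$, so $\ph$ is nilpotent and lies in $\cN^{\rel}_{\pi}$ by Lemma \ref{l:rel nilp cone}.

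\textbf{Step 2: surjectivity.} Let $\ph$ be nilpotent. Lemma \ref{l:two cones} (the Jacobson--Morozov argument) gives $\cE_{B}$ with $\ph\in\cohog{0}{X,\cE_{B}(\frn)\ot\om_{X}}$, so $d\frp^{\rel}(\ph)=0$. We must lift $\ph$ to an absolute covector $\Phi$ on $\Bun_{G}(\pi)$ with $d\frp(\Phi)=0$. Use the diagram of cotangent complexes over $S$: there are exact sequences $0\to T^{*}_{s}S\to T^{*}\Bun_{G}(\pi)\to T^{*}(\Pi)\to\cdots$ at $\cE$ and the analogous one at $\cE_{B}$. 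These are compatible with $d\frp$, and $d\frp$ is the identity on the $T^{*}_{s}S$ terms. Pick any lift $\Phi_{0}$ of $\ph$. Then $d\frp(\Phi_{0})$ maps to $0$ in the relative cotangent space of $\Bun_{B}(\pi)$, so it comes from $T^{*}_{s}S$. Subtracting that pullback of a covector from $S$ gives $\Phi$ with $d\frp(\Phi)=0$.

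Here is the obstacle: the sequences need not be exact on the right at $H^{0}$, because of stackiness and $H^{1}$ of the cotangent complex. Since $\Pi$ is smooth, the relative cotangent complex is in degrees $\ge 0$ and the sequence $T^{*}S\to T^{*}\Bun_{G}(\pi)\to T^{*}(\Pi)$ is left exact in degree $0$. Surjectivity onto $T^{*}(\Pi)$ requires the vanishing of a map into $H^{1}$ of the cotangent complex of $S$, which is zero if $S$ is a scheme. To handle this, I will first reduce to $S$ a smooth scheme by a smooth atlas, using Lemma \ref{l:Cart Lag}-type base change compatibility; the bijectivity claim is local on $S$ in the smooth topology. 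A cleaner alternative is to define everything via the $\Bun_B$-side exact sequence, where the same argument applies.

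\textbf{Step 3: injectivity.} Suppose $\Phi,\Phi'\in\cN^{\Eis}_{\pi}$ both map to $\ph$. Then $\Phi-\Phi'=d\Pi(\eta)$ for some $\eta\in T^{*}_{s}S$, by left exactness. The witnesses may be different $B$-reductions $\cE_{B}$ and $\cE'_{B}$. We need to show $\eta=0$, and I expect this to be the main obstacle. The key input is that $d\frp(\Phi)=0$ at $\cE_{B}$. Since $d\frp\circ d\Pi_{G}=d\Pi_{B}$ and $\Bun_{B}(\pi)\to S$ is smooth, hence $d\Pi_{B}$ is injective, it suffices to show $d\frp(\Phi)=0$ at the single reduction $\cE_{B}$ chosen canonically from $\ph$. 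This would give $d\Pi_{B}(\eta)=d\frp(\Phi-\Phi')=0$ at that reduction, and so $\eta=0$.

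I would choose $\cE_{B}$ inside the canonical parabolic reduction $\cE_{P}$ attached to $\ph$ by the Jacobson--Morozov filtration. I would then show that any $\cE'_{B}$ witnessing $\Phi'$ can be related to it. The first thing to try is to show that the locus of $\cE_{B}$ with $\ph\in\cE_{B}(\frn)\ot\om$ is connected: it is a fiber of a Springer-type map for the nilpotent Higgs field. Along a connected family of such reductions, the function $\cE_{B}\mapsto d\frp_{\cE_{B}}(\Phi)$ has values in $T^{*}S$ that form a constant map on a proper connected space. That constancy yields the comparison.
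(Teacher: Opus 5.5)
Your Steps 1 and 2 are fine. In Step 2 the detour through a smooth atlas works but is not needed: the proof of Lemma \ref{l:Cart Lag} gives $T^*_{(s,\cE)}\Bun_G(\pi)\cong T^*_{\cE}\Bun_G(\frX_s)\times_{T^*_{\cE_B}\Bun_B(\frX_s)}T^*_{(s,\cE_B)}\Bun_B(\pi)$, so the pair $(\varphi,0)$ lifts directly. This is how the paper handles the image. The gap is in Step 3, which is the real content of the lemma.

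Fix $\Phi$ over $\varphi$. Let $\mathcal{S}_\varphi$ be the set of $B$-reductions $\cE_B$ with $\varphi\in\cohog{0}{\frX_s,\cE_B(\frn)\otimes\omega}$, and define $\eta(\cE_B)\in T^*_sS$ by $d\frp_{\cE_B}(\Phi)=d\Pi_B(\eta(\cE_B))$. You want $\eta$ to be constant, and you argue this from $\mathcal{S}_\varphi$ being proper and connected. Neither holds.
\begin{itemize}
\item \emph{Not connected.} $B$-reductions are sections of $\cE/B\to\frX_s$, and they split into infinitely many disjoint pieces indexed by degree in $\xcoch(T)$. This already happens for $\varphi=0$, where $\mathcal{S}_0$ is the set of all $B$-reductions.
\item \emph{Not proper.} Each piece is only quasi-projective; for $\mathrm{SL}_2$ with $\cE$ trivial, line subbundles of a fixed negative degree form a non-proper family.
\item \emph{Spaltenstein does not apply directly.} His connectedness concerns the Springer fiber of the generic value $\gamma\in\frg(F)$, $F=k(\frX_s)$. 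Elements of $\mathcal{S}_\varphi$ are only its $F$-rational points, and there is no reason for $\eta$ to come from an algebraic function on that variety.
\item \emph{Properness is essential.} Even on a connected algebraic family of reductions, a regular map to the vector space $T^*_sS$ need not be constant.
\end{itemize}
Choosing $\cE_B$ inside the Jacobson--Morozov parabolic does not remove the need to compare reductions. A Borel whose nilradical contains $\gamma$ need not lie in that parabolic: for the minimal nilpotent of $\mathfrak{sl}_3$ the Jacobson--Morozov parabolic is a Borel, yet the Springer fiber is a chain of two $\PP^1$'s.

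The missing idea is to propagate the vanishing of $d\frp(\Phi)$ one simple reflection at a time, through a minimal parabolic.
\begin{itemize}
\item \emph{The chain.} Take a reduced word for the generic relative position of the two witnesses $\cE_B,\cE'_B$. The gallery of Borels $B=B_0,\dots,B_\ell=B'$ it determines is unique, hence defined over $F$. It is also fixed by $\exp(\gamma)$, since its endpoints are. Saturation then gives reductions $\cE^j_B\in\mathcal{S}_\varphi$.
\item \emph{Shared parabolic.} Consecutive reductions $\cE^{j-1}_B,\cE^{j}_B$ induce the same $P_j$-reduction $\cE_{P_j}$. Since $\cE^{j-1}_B(\frn)\cap\cE^{j}_B(\frn)=\cE_{P_j}(\frn_{P_j})$, you get $\varphi\in\cohog{0}{\cE_{P_j}(\frn_{P_j})\otimes\omega}$.
\item \emph{Inductive step.} Suppose $d\frp_{\cE^{j-1}_B}(\Phi)=0$. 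The image of $d\frp_{P_j}(\Phi)$ in the relative cotangent space of $\Bun_{P_j}(\pi)\to S$ vanishes, so it lies in the image of $T^*_sS$. Its further image in $T^*_{\cE^{j-1}_B}\Bun_B(\pi)$ is $d\frp_{\cE^{j-1}_B}(\Phi)=0$. Since $T^*_sS\to T^*\Bun_B(\pi)$ is injective, $d\frp_{P_j}(\Phi)=0$, hence $d\frp_{\cE^{j}_B}(\Phi)=0$.
\item \emph{Conclusion.} Induction gives $d\frp_{\cE'_B}(\Phi)=0$. Then $\Phi-\Phi'\in T^*_sS$ is killed by $d\frp_{\cE'_B}$, which is injective on $T^*_sS$, so $\Phi=\Phi'$.
\end{itemize}
This is the paper's argument; it avoids families of reductions entirely.
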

\begin{proof} We need to check that for any algebraically closed field $K\supset k$, $P_\pi$ induces a bijection on $K$-points $\cN^{\Eis}_{\pi}(K)\isom \cN^{\rel}_{\pi}(K)$. Base changing the situation from $k$ to $K$, we only need to consider the case $K=k$. Let $s\in S(k)$. Applying Lemma \ref{l:Cart Lag} to the Cartesian diagram
\begin{equation*}
\xymatrix{\Bun_{B}(\frX_{s}) \ar[r]\ar[d] & \Bun_{B}(\pi)\ar[d]\\
\Bun_{G}(\frX_{s}) \ar[r] & \Bun_{G}(\pi)
}
\end{equation*}
and the zero section of $T^{*}\Bun_{B}(\pi)$, we conclude that the image of $\cN^{\Eis}_{\pi}$ in $T^{*}\Bun_{G}(\frX_{s})$ is $\cN^{\Eis}_{\frX_{s}}=\cN_{\frX_{s}}$. Therefore the image of $\cN^{\Eis}_{\pi}$ under $P_{\pi}$ is precisely $\cN^{\rel}_{\pi}$.

It remains to show that $P_{\pi}: \cN^{\Eis}_{\pi}\to \cN^{\rel}_{\pi}$ is injective on $k$-points. Let $(s,\cE,\ph)\in \cN^{\rel}_{\pi}(k)$ (which means $s\in S(k)$ and $(\cE,\ph)$ is in the global nilpotent cone of $T^{*}\Bun_{G}(X_{s})$). Let $(s,\cE,\wt\ph)$ and $(s,\cE,\wt\ph')$ be two $k$-points of $\cN^{\Eis}_{\pi}$ lifting $(s,\cE,\ph)$, so that $\wt\ph,\wt\ph'\in T^{*}_{(s,\cE)}\Bun_{G}(\pi)$. Note that $\th:=\wt\ph-\wt\ph'\in T^{*}_{s}S$, and our goal is to show that $\th=0$. 

By definition, $(s,\cE,\wt\ph)\in \cN^{\Eis}_{\pi}(k)$ means that there exists a $B$-reduction $\cE_{B}$ of $\cE$ such that $d\frp_{(s,\cE_{B})}(\wt\ph)=0$, where $d\frp_{(s,\cE_{B})}: T^{*}_{(s,\cE)}\Bun_{G}(\pi)\to T^{*}_{(s,\cE_{B})}\Bun_{B}(\pi)$ is the cotangent map of $\frp$ at $(s,\cE_{B})$. Similarly, there exists another $B$-reduction $\cE'_{B}$ of $\cE$ such that $d\frp_{(s,\cE'_{B})}(\wt\ph')=0$.

We claim that $d\frp_{(s,\cE'_{B})}(\wt\ph)=0$. The idea is based on the proof of the connectedness of Springer fibers by Spaltenstein. Let $w$ be the relative position between the two $B$-reductions $(\cE_{B}, \cE'_{B})$ at the generic point of $\frX_{s}$. Upon choosing a generic trivialization of $\cE$, $\cE_{B}$ and $\cE_{B'}$ become two Borel subgroups $(B,B')$ of $G$ in relative position $w$ defined over $F=k(\frX_{s})$.  Fixing a reduced word $w=s_{i_{1}}s_{i_{2}}\cdots s_{i_{\ell}}$, and there is a unique chain of Borel subgroups $B=B_{0},B_{1},\cdots, B_{\ell}=B'$ of $G$ (a priori over $\ov F$) such that the relative position of $(B_{j-1}, B_{j})$ is $s_{i_{j}}$ for $1\le j\le \ell$. The uniqueness of such a chain implies that all $B_{j}$ are defined over $F$, hence by saturation they determine a chain of $B$-reductions of $\cE$
\begin{equation*}
\cE_{B}=\cE^{0}_{B}, \cE^{1}_{B},\cdots, \cE^{\ell-1}_{B}, \cE^{\ell}_{B}=\cE'_{B}
\end{equation*}
such that the relative position of $(\cE^{j-1}_{B},\cE^{j}_{B})$ at the generic point of $\frX_{s}$ is $s_{i_{j}}$ for $1\le j\le \ell$. We claim that the Higgs field $\ph\in \cohog{0}{\cE^{j}_{B}(\frn)\ot \om}$ for all $j$. Indeed, using the chosen generic trivialization of $\cE$, $\ph$ becomes a nilpotent element $\g\in \frg(F)$, and by assumption $\g$ lies in the nilradicals of both $B$ and $B'$. In other words, $\exp(\g)$ fixes $B$ and $B'$. By the uniqueness of the chain $(B_{j})$ connecting $B$ and $B'$, each $B_{j}$ is also fixed by $\exp(\g)$, hence $\g$ lies in the nilradical of $B_{j}$ for each $j$, i.e.,  $\ph\in \cohog{0}{\cE^{j}_{B}(\frn)\ot \om}$ for all $j$. Now we show that $d\frp_{(s,\cE^{j}_{B})}(\wt\ph)=0$ by induction on $j$. For $j=0$ this is true by assumption. Suppose $d\frp_{(s,\cE^{j-1}_{B})}(\wt\ph)=0$. Since $\cE^{j-1}_{B}$ and $\cE^{j}_{B}$ are in relative position $s_{i_{j}}$, $\cE^{j-1}_{B}(\frn)\cap \cE^{j}_{B}(\frn)=\cE_{P_{j}}(\frn_{P_{j}})$, where $P_{j}$ is the standard parabolic subgroup whose Levi has roots $\pm\a_{i_{j}}$, and $\cE_{P_{j}}$ is the $P_{j}$-torsor induced from $\cE^{j-1}_{B}$, which is also the $P_{j}$-torsor induced from $\cE^{j}_{B}$. Since $\ph\in \cohog{0}{\cE^{j-1}_{B}(\frn)\ot \om}$ and $\ph\in \cohog{0}{\cE^{j}_{B}(\frn)\ot \om}$, we see that
\begin{equation}\label{ph in nPj}
\ph\in \cohog{0}{\cE_{P_{j}}(\frn_{P_{j}})\ot \om}.
\end{equation}
Let $\frp_{P_{j}}: \Bun_{P_{j}}(\pi)\to \Bun_{G}(\pi)$ be the canonical map. Consider its cotangent map which fits into a commutative diagram
\begin{equation*}
\xymatrix{ & & T^{*}_{(s,\cE^{j-1}_{B})}\Bun_{B}(\pi) \\
T^{*}_{(s,\cE)}\Bun_{G}(\pi) \ar[r]^{d\frp_{P_{j}, (s,\cE_{P_{j}})}} \ar@/^1.5pc/[urr]^{d\frp_{(s,\cE^{j-1}_{B})}}\ar@/_1.5pc/[drr]_{d\frp_{(s,\cE^{j}_{B})}} &  T^{*}_{(s,\cE_{P_{j}})}\Bun_{P_{j}}(\pi)\ar[ur]\ar[dr] \\
 & & T^{*}_{(s,\cE^{j}_{B})}\Bun_{B}(\pi)
}
\end{equation*}
The fact that \eqref{ph in nPj} holds implies that $d\frp_{P_{j}, (s,\cE_{P_{j}})}(\wt\ph)$ lies in the image of $T^{*}_{s}S\incl T^{*}_{(s,\cE_{P_{j}})}\Bun_{P_{j}}(\pi)$, which must then be zero because its further image in $T^{*}_{(s,\cE^{j-1}_{B})}\Bun_{B}(\pi)$ is zero by the induction hypothesis that $d\frp_{(s,\cE^{j-1}_{B})}(\wt\ph)=0$. Therefore, $d\frp_{P_{j}, (s,\cE_{P_{j}})}(\wt\ph)=0$, hence $d\frp_{(s,\cE^{j}_{B})}(\wt\ph)=0$. This completes the inductive argument and shows in particular $d\frp_{(s,\cE'_{B})}(\wt\ph)=0$.

Finally, $\th=\wt\ph-\wt\ph'\in T^{*}_{s}S$ satisfies $d\frp_{(s,\cE'_{B})}(\th)=d\frp_{(s,\cE'_{B})}(\wt\ph)-d\frp_{(s,\cE'_{B})}(\wt\ph')=0$. Since $T_{s}^{*}S\to T^{*}_{(s,\cE'_{B})}\Bun_{B}(\pi)$ is injective, we must have $\th=0$. The proof of the lemma is complete.
\end{proof}

\sss{$\cN^{\Eis}_{\pi}$ is a Lagrangian}\label{sss:NEis const} 


Note that it is not a priori clear that $\cN^{\Eis}_{\pi}$ is constructible, because $\frp: \Bun_{B}(\pi)\to \Bun_{G}(\pi)$ is not of finite type.

For $\l\in \xcoch(T)$, let $\Bun_{B}^{\l}(\pi)\subset \Bun_{B}(\pi)$ be the open and closed substack corresponding to those $B$-bundles whose induced $T$-bundle have degree $\l$. Let $\frp^{\l}: \Bun^{\l}_{B}(\pi)\to \Bun_{G}(\pi)$ be the restriction of $\frp$, and let $\cW^{\l}=\orr{\frp^{\l}}(0_{\Bun^{\l}_{B}(\pi)})$, which is a constructible subset of $T^{*}\Bun_{G}(\pi)$ since $\frp^{\l}$ is of finite type. By Lemma \ref{l:pres iso}, each $\cW^{\l}$ is conic and isotropic.

We show that $\cN^{\Eis}_{\pi}$ is constructible, conic and isotropic. Let $\cU\subset \Bun_{G}(\pi)$ be an open substack of finite type over $k$ and let $\Pi_{\cU}: \cU\to S$ be the restriction of $\Pi$. The map $P_{\pi, \cU}: \cN^{\Eis}_{\pi}\cap T^{*}\cU\to \cN^{\rel}_{\pi}\cap T^{*}(\Pi_{\cU})$ is a bijection by Lemma \ref{l:NEis bij Nrel}. Therefore $\cup_{\l\in \xcoch(T)}P_{\pi}(\cW^{\l})$ covers $\cN^{\rel}_{\pi}\cap T^{*}(\Pi_{\cU})$. Since $\cN^{\rel}_{\pi}\to \Bun_{G}(\pi)$ is of finite type, $\cN^{\rel}_{\pi}\cap T^{*}(\Pi_{\cU})$ is of finite type over $k$. Therefore there is a finite subset $\L_{\cU}\subset \xcoch(T)$ such that $\cup_{\l\in \L_{\cU}}P_{\pi}(\cW^{\l})$ already covers $\cN^{\rel}_{\pi}\cap T^{*}(\Pi_{\cU})$. Since $P_{\pi}:\cN^{\Eis}_{\pi}\to \cN^{\rel}_{\pi}$ is a bijection on points, $\cup_{\l\in \L_{\cU}}\cW^{\l}$ covers $\cN^{\Eis}_{\pi}\cap T^{*}\cU$. This implies that $\cN^{\Eis}_{\pi}\cap T^{*}\cU$ is constructible, conic and isotropic. This being true for any finite type $\cU$, we conclude that $\cN^{\Eis}_{\pi}$ is isotropic.



Finally, by Lemma \ref{l:rel nilp cone}, $\cN^{\rel}_{\pi}$ is equidimensional with dimension equal to $\dim\Bun_{G}(\pi)$. Since $\cN^{\Eis}_{\pi}\to \cN^{\rel}_{\pi}$ is bijective, we conclude that $\dim\cN^{\Eis}_{\pi}=\dim\cN^{\rel}_{\pi}=\dim\Bun_{G}(\pi)$. Hence $\cN^{\Eis}_{\pi}$ is a Lagrangian.

\sss{Closedness of $\cN^{\Eis}_{\pi}$: the case of $\GL_{n}$}\label{sss:NEis closed GLn}
 
It remains to show that $\cN^{\Eis}_{\pi}$ is closed. This will be eventually proved in full generality without assuming \eqref{hyp} fiberwise or $G$ semisimple. Here we first consider the case $G=\GL_{n}$.


For $\l=(\l_{1},\cdots, \l_{n})\in \ZZ^{n}$, following Laumon \cite{LauEis}, the forgetful map $\frp^{\l}: \Bun^{\l}_{B}(\pi)\to \Bun_{G}(\pi)$ admits a relative compactification
\begin{equation*}
\ov\frp^{\l}: \ov\Bun^{\l}_{B}(\pi)\to \Bun_{G}(\pi).
\end{equation*}
Here, $\ov\Bun^{\l}_{B}(\pi)$ classifies $(s,\cE_{\bu}=\{\cE_{i}\}_{1\le i\le n})$, where $s\in S$, $\cE_{i}$ is a rank $i$ vector bundle over $\frX_{s}$ of degree $\l_{i}-\l_{i-1}$, together with maps $\cE_{1}\incl \cE_{2}\incl\cdots \cE_{n-1}\incl \cE_{n}$ that are injective as maps of coherent sheaves along each fiber of $\pi:\frX\to S$ (but not necessarily injective on stalks). The open locus $\Bun^{\l}_{B}(\pi)$ corresponds to where each $\cE_{i-1}\incl \cE_{i}$ is a subbundle. Note that 
$\ov\Bun^{\l}_{B}(\pi)$ is smooth over $S$ by a deformation-theoretic calculation.

Let $\ov\cW^{\l}=\orr{\ov\frp^{\l}}(0_{\ov\Bun^{\l}_{B}(\pi)})\subset T^{*}\Bun_{G}(\pi)$. Since $\ov\frp^{\l}$ is proper, $\ov\cW^{\l}$ is closed in $T^{*}\Bun_{G}(\pi)$. We also abbreviate $\cN^{\Eis,\l}_{\pi}=\orr{\frp^{\l}}(0_{\Bun^{\l}_{B}(\pi)})$ by $\cW^{\l}$.


\begin{lemma}\label{l:ov W}
We have
\begin{equation}\label{union W}
\cN^{\Eis}_{\pi}=\bigcup_{\l\in \ZZ^{n}}\ov\cW^{\l}.
\end{equation}
\end{lemma}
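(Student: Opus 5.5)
The inclusion $\cN^{\Eis}_{\pi}\subset\bigcup_{\l}\ov\cW^{\l}$ is immediate. Indeed, $\cN^{\Eis}_{\pi}=\bigcup_\l\cW^\l$. Moreover, $\Bun^\l_B(\pi)\subset\ov\Bun^\l_B(\pi)$ is open and $\frp^\l$ is the restriction of $\ov\frp^\l$, so the zero section of $T^*\Bun^\l_B(\pi)$ is the restriction of the zero section of $T^*\ov\Bun^\l_B(\pi)$. This gives $\cW^\l\subset\ov\cW^\l$.

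For the reverse inclusion, take a geometric point $(s,\cE,\wt\ph)\in\ov\cW^\l$. By definition there is a generalized flag $\cE_\bu=(\cE_1\incl\cdots\incl\cE_n=\cE)$ on $\frX_s$ with $d\ov\frp^\l_{(s,\cE_\bu)}(\wt\ph)=0$.

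Let $\cE'_\bu$ be the saturation of $\cE_\bu$: set $\cE'_i$ to be the saturation of $\cE_i$ in $\cE$. This is an honest $B$-reduction, lying in $\Bun^{\l'}_B(\pi)$ for some $\l'$, and $\cE_i\subset\cE'_i$ with torsion cokernel. The goal is to show $d\frp^{\l'}_{(s,\cE'_\bu)}(\wt\ph)=0$, which gives $(s,\cE,\wt\ph)\in\cW^{\l'}$.

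\emph{Step 1: relative component.} I would first compute the cotangent map of $\ov\frp^\l$ along the fiber $\frX_s$ via deformation theory. Tangent vectors to $\ov\Bun^\l_B$ at $\cE_\bu$ are $\upH^1$ of the complex of endomorphisms preserving the flag, $\mathcal{E}nd_{\bu}(\cE_\bu)$. The vanishing $d\ov\frp^\l(\wt\ph)=0$ then says, via Serre duality, that the image $\ph\in\upH^0(\cE(\frg)\ot\om)$ of $\wt\ph$ pairs to zero with this subsheaf. Concretely, $\ph(\cE_i)\subset\cE_{i-1}\ot\om$ generically, and since $\cE_{i-1}\ot\om\subset\cE'_{i-1}\ot\om$ is saturated in $\cE\ot\om$, we get $\ph(\cE'_i)\subset\cE'_{i-1}\ot\om$. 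So $\ph$ is a section of $\cE'_B(\frn)\ot\om$.

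\emph{Step 2: the $T^*_sS$-component.} Equivalently, I need to kill the horizontal part. By Step 1 and Lemma \ref{l:NEis bij Nrel}, $(s,\cE,\ph)\in\cN^{\rel}_\pi$, and it has a unique lift $\wt\ph''\in\cW^{\l'}$ with $d\frp_{(s,\cE'_\bu)}(\wt\ph'')=0$. The difference $\th=\wt\ph-\wt\ph''$ lies in $T^*_sS$.

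To show $\th=0$, I would use the natural map $\ov\Bun^{\l}_B(\pi)\to\Bun_{G}(\pi)$, which factors the situation near $\cE_\bu$ as follows. Consider the stack of pairs $(\cE'_\bu,\ \cE_\bu\subset\cE'_\bu)$; it maps to $\ov\Bun^\l_B(\pi)$ and to $\Bun^{\l'}_B(\pi)$ over $S$. Since it is a quot-type stack of finite colength subsheaves, its projection to $S$ is smooth, so pulling back through it the cotangent map from $T^*_sS$ remains injective.

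We have $d\ov\frp^\l(\th)=d\ov\frp^\l(\wt\ph)-d\ov\frp^\l(\wt\ph'')$, and both terms pull back to the same point in the cotangent space of this correspondence stack. For the first term this holds because $d\ov\frp^\l(\wt\ph)=0$. For the second, pass through $\Bun^{\l'}_B(\pi)$, where $d\frp(\wt\ph'')=0$. So the pullback of $\th$ to the correspondence is zero, and injectivity of $T^*_sS\to T^*(\text{correspondence})$, coming from smoothness over $S$, gives $\th=0$.

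The main obstacle is making the smoothness over $S$ of $\ov\Bun^\l_B(\pi)$, and of this correspondence, precise. Alternatively one can directly check that $T^*_sS\to T^*_{(s,\cE_\bu)}\ov\Bun^\l_B(\pi)$ is injective, which follows from smoothness of $\ov\Bun^\l_B(\pi)\to S$ as stated. If that holds, there is a shortcut: $\wt\ph$ and $\wt\ph''$ both pull back to zero in $T^*$ of the correspondence stack, and the argument above then closes Step 2.
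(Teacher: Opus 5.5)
Your first inclusion and Step 1 are fine in substance. One small point: Lemma \ref{l:NEis bij Nrel} as stated only produces a lift $\wt\ph''\in\cN^{\Eis}_\pi$ killed at \emph{some} $B$-reduction. To get one killed at $\cE'_\bu$, you need the chain argument from its proof. More simply, you can lift the relative kernel element at $\cE'_\bu$ directly, using exactness of $T^*_sS\to T^*\Bun_B(\pi)\to$ (relative cotangent of $\Bun_B(\pi)$ over $S$).

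The genuine gap is Step 2. It rests entirely on the claim that the correspondence stack $C$ of pairs $(\cE'_\bu,\cE_\bu\subset\cE'_\bu)$ is smooth over $S$, and this is false in general because nested flag-Quot spaces are singular.

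Take $n=3$ with $\cE'_1/\cE_1$ and $\cE'_2/\cE_2$ of length one. Over a fixed $\cE'_\bu$, the fiber of $C$ parametrizes triples $(x,y,\ell)$ with $\cE_1=\cE'_1(-x)$ and $\cE_2=\{v\in\cE'_2 : v(y)\in\ell\}$ for a line $\ell\subset\cE'_2|_y$. The condition $\cE_1\subset\cE_2$ forces $\ell=\cE'_1|_y$ when $y\neq x$, and is vacuous when $y=x$. So the fiber is a copy of $\frX_s\times\frX_s$ union a copy of $\PP(\cE'_2)$: two surfaces meeting along the curve $\{(x,x,\cE'_1|_x)\}$. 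Your point $z$ can lie on that curve, e.g.\ $\cE_2=\cE'_1+\cE'_2(-x)$. Since $C$ is smooth-locally a product of such a space with something smooth over $S$, neither $C\to S$ nor $C_s$ is smooth at $z$.

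Your fallback does not help. Injectivity of $T^*_sS\to T^*_{(s,\cE_\bu)}\ov\Bun^\l_B(\pi)$ would be useful only if you knew $d\ov\frp^\l(\wt\ph'')=0$. You only know that $\wt\ph''$ vanishes after pulling back to $C$, so you are again reduced to injectivity of $T^*_sS\to T^*_zC$.

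What you actually need is only that $T_zC\to T_sS$ is surjective on first-order deformations. This is true, but it needs proof:
\begin{enumerate}
\item lift a deformation of $s$ to one of $(s,\cE'_\bu)$, using smoothness of $\Bun^{\l'}_B(\pi)\to S$;
\item trivialize the deformed curve and $B$-bundle on formal neighbourhoods of the finitely many support points;
\item transport the local submodules constantly.
\end{enumerate}
This argument actually gives more: surjectivity of $T_zC\to T_{(s,\cE'_\bu)}\Bun^{\l'}_B(\pi)$. Then $db$ is injective on the whole cotangent space, and $db(d\frp^{\l'}(\wt\ph))=da(d\ov\frp^\l(\wt\ph))=0$ yields $d\frp^{\l'}(\wt\ph)=0$ at once. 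Step 1 and $\wt\ph''$ then become unnecessary.

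This is the paper's proof. It packages the lifting property as honest smoothness of $\b\colon Z=\Bun^{\l'}_{B,m}(\pi,\Sig)\times\wt Y_{m,\Sig}\to\Bun^{\l'}_B(\pi)$. The ingredients are sections through the support (after \'etale base change), trivializations of $m$-jets, and a resolution $\wt Y_{m,\Sig}$ of the possibly singular local flag-Quot scheme $Y_{m,\Sig}$.
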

\begin{proof}

By construction, $\cN^{\Eis}_{\pi}=\cup_{\l}\cW^{\l}\subset \cup_{\l}\ov\cW^{\l}$. On the other hand, for any $\l\in \ZZ^{n}$, we show that
\begin{equation}\label{ov W estimate}
\ov\cW^{\l}\subset \bigcup_{\l'\ge \l}\cW^{\l'},
\end{equation}
where $\l'\ge\l$ means $\l'-\l$ is a sum of positive coroots of $\GL_{n}$, i.e., $\l'_{1}+\cdots+\l'_{i}\ge \l_{1}+\cdots+\l_{i}$ for all $i=1,\cdots, n$, and the equality holds for $i=n$.

To show the inclusion \eqref{ov W estimate} for $K$-valued points for any algebraically closed field $K\supset k$, we can make a base change from $k$ to $K$ and reduce to showing the inclusion for $k$-points. Let $(s,\cE,\xi)\in \ov\cW^{\l}(k)$, which means $s\in S(k)$, $\cE$ is a $G$-bundle over $\frX_{s}$, and $\xi\in T^{*}_{(s,\cE)}\Bun_{G}(\pi)$, and there exists a subsequence of subsheaves $\cE_{1}\incl \cE_{2}\incl \cdots\incl \cE_{n-1}$ of $\cE=\cE_{n}$ such that $\cE_{i}$ has rank $i$ and degree $\l_{1}+\cdots+\l_{i}$ for $1\le i\le n$, and $d\ov\frp^{\l}(\xi)=0\in T^{*}_{(s,\cE_{\bu})}\ov\Bun^{\l}_{B}(\pi)$.

Let $\cE'_{i}$ be the saturation of $\cE_{i}$ in $\cE$. Let $\l'\in\ZZ^{n}$ be defined such that $\deg(\cE'_{i})=\l'_{1}+\cdots+\l'_{i}$ for $1\le i\le n$. Then $\l'\ge \l$, and $(s,\cE'_{\bu})\in \Bun^{\l'}_{B}(\pi)(k)$ maps to $(s,\cE)\in\Bun_{G}(\pi)(k)$. We will construct an auxiliary stack $Z$ that fits into a commutative diagram
\begin{equation}\label{Z over BunB}
\xymatrix{Z \ar[r]^-{\a}\ar[d]_{\b} & \ov\Bun^{\l}_{B}(\pi)\ar[d]^{\ov\frp^{\l}}\\
\Bun^{\l'}_{B}(\pi)\ar[r]^-{\frp^{\l'}} & \Bun_{G}(\pi)}
\end{equation}
such that $\b$ is smooth. Moreover, we will construct a point $z\in Z(k)$ mapping to $(s,\cE'_{\bu})\in \Bun^{\l'}_{B}(\pi)(k)$ and to $(s,\cE_{\bu})\in \ov\Bun^{\l}_{B}(\pi)(k)$. Consider the cotangent maps
\begin{equation*}
\xymatrix{T^{*}_{z}Z & \ar[l]_-{d\a}  T^{*}_{(s,\cE_{\bu})}\ov\Bun^{\l}_{B}(\pi)\\
T^{*}_{(s,\cE'_{\bu})}\Bun^{\l'}_{B}(\pi)\ar[u]_{d\b} & \ar[l]_-{d\frp^{\l'}}\ar[u]^{d\ov\frp^{\l}} T^{*}_{(s,\cE)}\Bun_{G}(\pi)}
\end{equation*}
By assumption, we have
\begin{equation*}
0=d\a\c d\ov\frp^{\l}(\xi)= d\b\c d\frp^{\l'}(\xi)=0.
\end{equation*}
Since $\b$ is smooth, $d\b$ is injective, we must have $d\frp^{\l'}(\xi)=0$, i.e., $(s,\cE,\xi)\in \cW^{\l'}(k)$. This proves \eqref{ov W estimate}.

Now the construction of $Z$ and $z\in Z(k)$. For this purpose we can replace $S$ by a smooth scheme over $k$ that covers the point $s$. Let $D_{i}$ be the divisor associated to the torsion sheaf quotient $\cE'_{i}/\cE_{i}$ (support weighted with multiplicities). Let $\Sig_{s}\subset \frX_{s}$ be the support of $\sum_{i=1}^{n-1} D_{i}$. Upon replacing $S$ by an \'etale cover, we may assume that $\Sig_{s}$ extends to a collection of disjoint sections of $\pi: \frX\to S$ indexed by a finite set $\Sig$, so that $\Sig_{s}=\{\t(s)|\t\in \Sig\}$. Let $m$ be the maximum of the multiplicities that appear in the divisors $D_{1},\cdots, D_{n-1}$. Let $\t^{(m)}$ be the $m$th infinitesimal neighborhood of $\t(S)\subset \frX$. Upon making a further smooth base change for $S$, we may choose a trivialization $i_{\t}: \t^{(m)}\cong S\times_{k} \Spec R_{m}$ over $S$, where $R_{m}=k[t]/(t^{m})$.

Let $\Bun_{B,m}(\pi,\Sig)$ be the moduli stack of $B$-bundles over fibers of $\pi$ together with a trivialization along the $m$th infinitesimal neighborhood $\t^{(m)}$ of each section $\t\in \Sig$. 

For each $\t\in \Sig$, let $d_{i}(\t)$ be the multiplicity of $D_{i}$ at $\t(s)$. Let $Y_{m,\t}$ be the moduli space of $M_{\bu}=(M_{i})_{1\le i\le n}$ where each $M_{i}\subset R_{m}^{\op i}$ is an $R_{m}$-submodule such that $R_{m}^{\op i}/M_{i}$ has length $d_{i}(\t)$ (for $i=n$ we declare $M_{n}=R_{m}^{\op n}$), and that $M_{i-1}\subset M_{i}$ compatibly with the inclusion $R_{m}^{\op i-1}\incl R_{m}^{\op i}$ (inclusion of the first $(i-1)$-coordinates, $2\le i\le n$). It is clear that $Y_{m,\t}$ is a projective scheme over $k$. 
Let $Y_{m,\Sig}=\prod_{\t\in \Sig}Y_{m,\t}$ and let
\begin{equation*}
Z':=\Bun^{\l'}_{B,m}(\pi,\Sig)\times Y_{m,\Sig}.
\end{equation*}
We have a canonical projection $\b': Z'\to \Bun^{\l'}_{B,m}(\pi,\Sig)\to \Bun^{\l'}_{B}(\pi)$. We also have a canonical map $\a': Z'\to \ov\Bun_{B}^{\l}(\pi)$ defined as follows. A point of $Z'$ is a tuple 
$(s', \cF_{\bu}, \{\io(\t)\}_{\t\in \Sig}, \{M_{\bu}(\t)\}_{\t\in \Sig})$, where $(s',\cF_{\bu})\in \Bun^{\l'}_{B}(\pi)$, $\io(\t)$ is the system of trivializations $R^{\op i}_{m}\cong \cF_{i}|_{\t^{(m)}(s')}$ compatible with inclusions, and $M_{\bu}(\t)\in Y_{m,\t}$. The map $\a'$ sends such a tuple to $(s',\cG_{\bu})$ where $\cG_{i}\subset \cF_{i}$ is the preimage of $\op_{\t}M_{i}(\t)$ under the projection $\cF_{i}\to \op_{\t\in \Sig}\cF_{i}|_{\t^{(m)}(s')}\cong \op_{\t\in \Sig}R_{m}^{\op i}$. 

We let $\wt Y_{m,\Sig}$ be a resolution of singularities of $Y_{m,\Sig}$. Let 
\begin{equation*}
Z:=\Bun^{\l'}_{B,m}(\pi,\Sig)\times \wt Y_{m,\Sig}.
\end{equation*}
Let $\a: Z\to Z'\xr{\a'}\ov\Bun_{B}^{\l}(\pi)$ and $\b: Z\to Z'\xr{\b'}\Bun_{B}^{\l'}(\pi)$ be the compositions.  The diagram \eqref{Z over BunB} is constructed. 

Finally we construct $z\in Z(k)$ such that
\begin{equation}\label{image z}
\a(z)=(s,\cE_{\bu})\in \ov\Bun^{\l}_{B}(\pi)(k), \quad  \b(z)=(s,\cE'_{\bu})\in 
\Bun^{\l'}_{B}(\pi)(k).
\end{equation}
Using the trivialization $i_{\t}$ we get an isomorphism $\t^{(m)}(s)\cong \Spec R_{m}$ for each $\t\in \Sig$. Choose trivializations of $R_{m}$-modules $\io(\t): \cE'_{i}|_{\t^{(m)}(s)}\cong R_{m}^{\op i}$ compatible with the inclusions, for each $\t\in \Sig$. Let 
let $(M_{i}(\t))_{1\le i\le n}\in Y_{m,\t}(k)$ be the chain of $R_{m}$-modules obtained as the image of $\cE_{i}$ under $  \cE_{i}\subset \cE'_{i}\surj \cE'_{i}|_{\t^{(m)}(s)}\xr{\io(\t)} R_{m}^{\op i}$. The tuple $(s,\cE'_{\bu}, \{\io(\t)\}_{\t\in \Sig}, \{M_{\bu}(\t)\}_{\t\in \Sig})$ defines a $k$-point $z'\in Z'(k)$ with the property that  $\a'(z')=(s,\cE_{\bu})$ and $\b'(z')=(s,\cE'_{\bu})$. Let $y\in \wt Y_{m,\Sig}(k)$ be any point lifting $\{M_{\bu}(\t)\}_{\t\in \Sig}\in Y_{m,\Sig}(k)$, so that $z=(s,\cE'_{\bu}, \{\io(\t)\}_{\t\in \Sig}, y)\in Z(k)$ maps to $z'\in Z'(k)$. Therefore \eqref{image z} hold.

\end{proof}

We continue to show that $\cN^{\Eis}_{\pi}$ is closed for $G=\GL_{n}$. In \S\ref{sss:NEis const} we showed that for any open substack $\cU\subset \Bun_{G}(\pi)$ of finite type over $k$, $\cN^{\Eis}_{\pi}\cap T^{*}\cU\subset \cup_{\l\in \L_{\cU}}\cW^{\l}$ for a finite subset $\L_{\cU}\subset \xcoch(T)=\ZZ^{n}$. By Lemma \ref{l:ov W}, we have 
\begin{equation*}
\cN^{\Eis}_{\pi}\cap T^{*}\cU= \bigcup_{\l\in \L_{\cU}}\ov\cW^{\l}\cap T^{*}\cU.
\end{equation*}
Each $\ov\cW^{\l}\cap T^{*}\cU$ is closed in $T^{*}\cU$, $\cN^{\Eis}_{\pi}\cap T^{*}\cU$ is closed in $T^{*}\cU$. This being true for all finite type $\cU$, we conclude that $\cN^{\Eis}_{\pi}$ is closed. This proves the closedness of $\cN^{\Eis}_{\pi}$ for $G=\GL_{n}$.


\sss{Change of groups}\label{sss:NEis change gp} To prove the closedness of $\cN^{\Eis}_{\pi}$ for general $G$, we consider the behavior of $\cN^{\Eis}_{\pi}$ under change of groups. Suppose $f: G\to G'$ is a homomorphism of connected reductive groups with finite kernel, so that $df: \frg\incl \frg'$ is injective. Hence we can write $\frg'=\frg\op V$ where the $G$-module $V$ is the orthogonal complement of $\frg$ under the Killing form of $\frg'$. Dually, we have a $G$-equivariant decomposition
\begin{equation}\label{decomp g*}
\frg'^{*}=\frg^{*}\op V^{*}.
\end{equation}
Let $f_{\Bun}: \Bun_{G}(\pi)\to \Bun_{G'}(\pi)$ be the map induced by $f$. Let $\Pi_{G'}: \Bun_{G'}(\pi)\to S$ be the projection. Then $f_{\Bun}$ induces a map on the cotangent and  relative cotangent bundles
\begin{eqnarray*}
&&df_{\Bun}: T^{*}\Bun_{G'}(\pi)|_{\Bun_{G}(\pi)}\to T^{*}\Bun_{G}(\pi),\\
&&df_{\Bun,\rel}: T^{*}(\Pi_{G'})|_{\Bun_{G}(\pi)}\to T^*(\Pi).
\end{eqnarray*}
We claim that $df_{\Bun,\rel}$ has a canonical section. Indeed, the fiber of $T^{*}(\Pi_{G'})|_{\Bun_{G}(\pi)}$ at $(s,\cE)\in \Bun_{G}(\pi)$ is $\cohog{0}{\frX_{s}, \cE(\frg'^{*})\ot \om_{\frX_{s}}}$, which under the decomposition \eqref{decomp g*} decomposes as
\begin{equation*}
\cohog{0}{\frX_{s}, \cE(\frg^{*})\ot \om_{\frX_{s}}}\op \cohog{0}{\frX_{s}, \cE(V^{*})\ot \om_{\frX_{s}}}.
\end{equation*}
The inclusion of the first summand gives a section $\g_{\rel}$ to $df_{\Bun,\rel}$ whose image we denote by $\G_{\rel}$. We can then define a conic substack $\G\subset T^{*}\Bun_{G'}(\pi)|_{\Bun_{G}(\pi)}$ as the preimage of $\G_{\rel}$ under the projection $ T^{*}\Bun_{G'}(\pi)|_{\Bun_{G}(\pi)}\to T^*(\Pi_{G'})|_{\Bun_{G}(\pi)}$. Then the composition
\begin{equation*}
\G\subset T^{*}\Bun_{G'}(\pi)|_{\Bun_{G}(\pi)}\to T^{*}\Bun_{G}(\pi)
\end{equation*}
is an isomorphism and gives a section $\g$ to $df_{\Bun}$ compatible with $\g_{\rel}$. We therefore get a (counter-intuitive) map
\begin{equation}\label{wrong way}
\r: T^{*}\Bun_{G}(\pi)\xr{\g}T^{*}\Bun_{G'}(\pi)|_{\Bun_{G}(\pi)}\xr{f_{\Bun}^{\na}}T^{*}\Bun_{G'}(\pi).
\end{equation}

\begin{lemma}\label{l:Eis GG'} 
Under the above notations, we have an equality of subsets of $T^{*}\Bun_{G}(\pi)$
\begin{equation*}
\cN^{\Eis}_{\pi}=\r^{-1}(\cN^{\Eis}_{\pi,G'}).
\end{equation*}
\end{lemma}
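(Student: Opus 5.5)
We will prove the equality on $K$-points for every algebraically closed field $K\supset k$; after base change we may take $K=k$. Both sides lie over $\Bun_G(\pi)$. The map $\r$ preserves the base point $(s,\cE)$, in the sense that $\r(\xi)$ lies over $f_{\Bun}(s,\cE)$. On a single cotangent fiber $\r$ is injective: $\g$ is a section, and $f_{\Bun}^{\na}$ is the identity on fibers. Relatively, $\r$ is compatible with $\g_{\rel}$. Write $\ph\in \cohog{0}{\frX_s,\cE(\frg^*)\ot\om}$ for the image of $\xi$ in $T^*(\Pi)$. Then the relative image of $\r(\xi)$ is $\g_{\rel}(\ph)$, which is $\ph$ viewed in $\frg'^*$ via \eqref{decomp g*}. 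Throughout, we identify $\frg^*\cong\frg$ using the restriction of the invariant form of $\frg'$. This restriction is nondegenerate, since $V$ is the orthogonal complement of $\frg$. With this convention, \eqref{decomp g*} is just $\frg'=\frg\op V$.

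\emph{Step 1: $\cN^{\Eis}_{\pi}\subset\r^{-1}(\cN^{\Eis}_{\pi,G'})$.} Let $\xi\in\cN^{\Eis}_\pi$ over $(s,\cE)$, and let $\cE_B$ be a $B$-reduction with $d\frp_{(s,\cE_B)}(\xi)=0$. Choose a Borel subgroup $B'\supset f(B)$ of $G'$, and let $\cE_{B'}$ be the induced $B'$-reduction of $\cE'=f_*\cE$. We have a commutative square $\Bun_B(\pi)\to\Bun_{B'}(\pi)$ over $\Bun_G(\pi)\to\Bun_{G'}(\pi)$. Chasing cotangent maps and using $df_{\Bun}\c\g=\id$, we get $df_B\big(d\frp'(\r\xi)\big)=d\frp(df_{\Bun}\g\xi)=d\frp(\xi)=0$. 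Next consider the relative part of $d\frp'(\r\xi)$, which is the image of $\ph$ in $\cE_{B'}(\frb'^*)\ot\om$. By assumption $\ph$ lies in $\cE_B(\frn)\ot\om$. Moreover $f(N)\subset N'$, so $\frn\subset\frn'=\ker(\frg'^*\to\frb'^*)$. Hence this relative part vanishes, and therefore $d\frp'(\r\xi)\in T^*_sS$. The composite $T^*_sS\to T^*\Bun_{B'}(\pi)\to T^*\Bun_B(\pi)$ is injective, because both stacks are smooth over $S$. It follows that $d\frp'(\r\xi)=0$, i.e.\ $\r(\xi)\in\cN^{\Eis}_{\pi,G'}$.

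\emph{Step 2: the reverse inclusion.} Let $\xi$ be a point with $\r(\xi)\in\cN^{\Eis}_{\pi,G'}$. By Lemma \ref{l:NEis bij Nrel} for $G'$, the element $\g_{\rel}(\ph)$, i.e.\ $\ph\in\frg\subset\frg'$, is generically nilpotent in $\frg'$. Since $df$ is injective with finite-kernel group map, Jordan decompositions are compatible, so $\ph$ is nilpotent in $\frg$. Thus $\ph\in\cN^{\rel}_\pi$. By Lemma \ref{l:NEis bij Nrel} for $G$, there is a point $\xi_0\in\cN^{\Eis}_\pi$ over $(s,\cE)$ with relative image $\ph$. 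By Step 1, $\r(\xi_0)\in\cN^{\Eis}_{\pi,G'}$. Now $\r(\xi)$ and $\r(\xi_0)$ lie over the same point of $\Bun_{G'}(\pi)$ and have the same relative image $\g_{\rel}(\ph)$. The injectivity part of Lemma \ref{l:NEis bij Nrel} for $G'$ then gives $\r(\xi)=\r(\xi_0)$. Fiberwise injectivity of $\r$ yields $\xi=\xi_0\in\cN^{\Eis}_\pi$.

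\emph{Expected obstacle.} I expect the main difficulty to be the bookkeeping of dualities in Step 1. One must check that $\ker(\frg^*\to\frb^*)$ and $\ker(\frg'^*\to\frb'^*)$ are matched with $\frn$ and $\frn'$ compatibly with \eqref{decomp g*}. This should follow because $V\perp\frg$ and $\frn'$ is the orthogonal of $\frb'$ under the form of $\frg'$. A second point to watch is that Lemma \ref{l:NEis bij Nrel} was stated under the semisimplicity and \eqref{hyp} hypotheses. If $G'=\GL_n$ lies outside these hypotheses, I would instead rerun its injectivity argument directly. That argument only uses the Spaltenstein chain and the injectivity of $T^*_sS\to T^*\Bun_{B}(\pi)$, neither of which needs these hypotheses.
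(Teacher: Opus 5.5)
Your proof is correct. Step 2 is essentially the paper's own reduction, while Step 1 takes a genuinely different route.

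The paper also argues that both $\cN^{\Eis}_{\pi}$ and $\r^{-1}(\cN^{\Eis}_{\pi,G'})$ project injectively to $T^*(\Pi)$. This rests on the Spaltenstein-chain injectivity of Lemma~\ref{l:NEis bij Nrel} for both $G$ and $G'$. As you note, that injectivity needs neither semisimplicity nor \eqref{hyp}; the paper uses it for $G'=\GL_n$ without comment. Equality then reduces to the pointwise identity $\cN_{\frg^*}=\frg^*\cap\cN_{\frg'^*}$, exactly as in your Step 2.

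In Step 1, the paper chooses $B\subset G$ and $B'\subset G'$ compatibly via a generic cocharacter, so that $\frb'=\frb\op(V\cap\frb')$ is a $B$-equivariant decomposition. This yields a section $t$ of $d\b$ on the conormal loci and a map $\t:\L_B\to\L_{B'}$ with $\frq'\c\t=\r\c\frq$. With that map in hand, the inclusion $\cN^{\Eis}_{\pi}\subset\r^{-1}(\cN^{\Eis}_{\pi,G'})$ is tautological.

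You instead take an arbitrary Borel $B'\supset f(B)$ and check $d\frp'(\r\xi)=0$ pointwise, in three parts.
\begin{itemize}
\item Its pullback to $\Bun_B(\pi)$ vanishes because $df_{\Bun}\c\g=\id$.
\item Its relative part vanishes because $V\perp\frg$ identifies $\g_{\rel}(\ph)$ with $\ph\in\frn\subset\frn'=\frb'^{\perp}$. Equivalently, the orthogonal projection of $\frb'$ to $\frg$ lands in $\frb$.
\item The leftover $T^*_sS$-component is killed by injectivity of $T^*_sS\to T^*\Bun_B(\pi)$, the same trick as in the injectivity half of Lemma~\ref{l:NEis bij Nrel}.
\end{itemize}

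Your route is shorter and avoids the special choice of Borels. The paper's route produces an actual morphism of correspondences lifting $\r$, rather than a pointwise verification.

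Your use of a nondegenerate invariant form on $\frg'$, rather than its Killing form, is the correct convention when $G'=\GL_n$.
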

\begin{proof}
Let us choose the Borel subgroups $B\subset G$ and $B'\subset G'$ compatible with the decomposition \eqref{decomp g*} in the following way. Let $\l: \Gm\to  G$ be a generic homomorphism such that the non-negative weights of $\Ad(\l)$ on $\frg$ defines a Borel subalgebra $\frb$, hence a Borel subgroup $B$ (the attracting locus to $1$ under $\l$). Let $T=C_{G}(\l)$, a maximal torus of $B$.  Let $V_{0}$ be the $0$-weight space of $\Ad(\l)$ on $V$ and  $V_{\ge0}$ be the direct sum of weight spaces with weights $\ge0$. Then $\frb\op V_{\ge0}$ is the Lie algebra of the parabolic subgroup $P'\subset G'$, the attracting locus to $1\in G'$ under $f\c\l$. Then $L'=C_{G'}(\l)$ is a Levi subgroup of $P'$ with Lie algebra $\frl'=\frt\op V_{0}$. Let $B'_{1}\subset L'$ be any Borel subgroup containing $T$, and let $B'=N_{P'}B'_{1}\subset P'$ be the corresponding Borel subgroup of $P'$, hence of $G'$. By construction, on the level of Lie algebras we have a $B$-equivariant decomposition $\frb'=\frb\op (V\cap \frb')$. In particular, we have a $B$-equivariant decomposition 
\begin{equation}\label{decomp bper}
\frb'^{\bot}=\frb^{\bot}\op (V\cap \frb')^{\bot}
\end{equation}
where $\frb^{\bot}\subset \frg^{*}$ (resp. $ (V\cap \frb')^{\bot}\subset V^{*}$) is the orthogonal complement of $\frb$ (resp. $(V\cap \frb')^{\bot}$).

Let $\L_{B}\subset T^{*}\Bun_{G}(\pi)|_{\Bun_{B}(\pi)}$ be the preimage of the zero section under its projection $d\frp$ to $T^{*}\Bun_{B}(\pi)$; similarly define $\L_{B'}\subset T^{*}\Bun_{G'}(\pi)|_{\Bun_{B'}(\pi)}$. Let $\L^{\rel}_{B}\subset T^*(\Pi)|_{\Bun_{B}(\pi)}$ and $\L_{B'}^{\rel}\subset T^{*}(\Pi_{G'})|_{\Bun_{B'}(\pi)}$ be the images of $\L_{B}$ and $\L_{B'}$ in  the relative cotangent bundles. Note that the projections $\L_{B}\to \L_{B}^{\rel}$ and $\L_{B'}\to \L_{B'}^{\rel}$ are bijective on points. The differential of the map $\b:\Bun_{B}(\pi)\to \Bun_{B'}(\pi)$ then induces maps
\begin{equation*}
\xymatrix{ \L_{B'}|_{\Bun_{B}(\pi)}\ar[d]^{\wr}\ar[r]^-{d\b}& \L_{B}\ar[d]^{\wr}\\
\L_{B'}^{\rel}|_{\Bun_{B}(\pi)}\ar[r]^-{d\b_{\rel}}& \L_{B}^{\rel}}
\end{equation*}
The decomposition \eqref{decomp bper} gives a section $t_{\rel}$ to $d\b_{\rel}$ (similar to the construction of $\g_{\rel}$), hence a section $t$ of $d\b$. We then have a map
\begin{equation*}
\t: \L_{B}\xr{t}\L_{B'}|_{\Bun_{B}(\pi)}\to \L_{B'}
\end{equation*}
that fits into a commutative diagram
\begin{equation*}
\xymatrix{\L_{B} \ar[d]^{\frq} \ar[r]^-{\t}& \L_{B'}\ar[d]^{\frq'}\\
T^{*}\Bun_{G}(\pi)\ar[r]^-{\r} & T^{*}\Bun_{G'}(\pi)}
\end{equation*}
Here $\frq$ and $\frq'$ are the restrictions of $\frp^{\na}$ and $\frp'^{\na}$. We therefore have a tautological  inclusion
\begin{equation*}
\cN^{\Eis}_{\pi}=\frq(\L_{B})\subset \r^{-1}(\frq'(\L_{B'}))=\r^{-1}(\cN^{\Eis}_{\pi, G'}).
\end{equation*}
To show this is an equality, observing that both sides above project injectively to the relative cotangent $T^*(\Pi)$ (which has been shown in the first paragraph of the proof of Theorem \ref{th:Eis cone closed}), it suffices to check their images in $T^*(\Pi)$ are the same. This allows us to reduce to the case where $S$ is a point. In this case, we need to show that $\cN_{X}=\r^{-1}(\cN_{X,G'})$, i.e., a $G$-Higgs bundle $(\cE,\ph)$ is nilpotent if and only if the $G'$-Higgs bundle $(\cE'=\cE\times^{G}G', (\ph,0))$, where $(\ph,0)$ is a section of $\cE'(\frg'^{*})\ot\om=\cE(\frg^{*})\ot\om\op \cE(V^{*})\ot\om$ by the decomposition \eqref{decomp g*}, is nilpotent. This last claim follows from the obvious relationship between the nilpotent cones $\cN_{\frg^{*}}=\frg^{*}\cap \cN_{\frg'^{*}}$. The proof of the lemma is finished.
\end{proof}

\sss{Closedness of $\cN^{\Eis}_{\pi}$: general case}\label{sss:NEis closed gen}
Choose an embedding $f: G\incl G'=\GL_{n}$ for some $n$. By Lemma \ref{l:Eis GG'}, $\cN^{\Eis}_{\pi}$ is the preimage of $\cN^{\Eis}_{\pi,G'}$ under the wrong way map \eqref{wrong way}. We have shown in \S\ref{sss:NEis closed GLn} that $\cN^{\Eis}_{\pi,G'}$ is closed in $T^{*}\Bun_{G'}(\pi)$, therefore $\cN^{\Eis}_{\pi}$ is closed in $T^{*}\Bun_{G}(\pi)$. 

We now finish the proof of Theorem \ref{th:Eis cone closed}. The only remaining statement is that $P^{\Eis}_\pi=P_\pi|_{\cN^{\Eis}_\pi}: \cN^{\Eis}_\pi\to \cN^{\rel}_\pi$ is a universal homeomorphism. Since $P_\pi: T^*\Bun_G(\pi)\to T^*\Pi$ is affine and $\cN^{\Eis}_\pi$ is closed in $T^*\Bun_G(\pi)$, $P^{\Eis}_\pi$ is also affine. By Lemma \ref{l:NEis bij Nrel}, $P^{\Eis}_\pi$ is a bijection on geometric points. Therefore $P^{\Eis}_\pi$ is finite, hence universally closed. Using that $P^{\Eis}_{\pi}$ is bijective on geometric points again, we conclude that it is a universal homeomorphism. This finishes the proof of Theorem \ref{th:Eis cone closed}.

\subsection{Universal Eisenstein cone with level structure}\label{ss:Eis cone level}
In this subsection we extend the construction of the universal Eisenstein cone to the moduli of bundles with level structures. 

Recall that $\Pi^{N}_{\s}: \Bun_{G,N}(\pi,\s)\to S$ and its relative cotangent stack $T^{*}(\Pi^{N}_{\s})$. Similarly we have the $B$-level structure versions $\Pi^{B}_{\s}$ and $T^{*}(\Pi^{B}_{\s})$.

We make a family version of the constructions in \S\ref{sss:univ cone B}. Let $\Bun_{G,1}(\pi, \s)$ (resp. $\Bun_{B,1}(\pi, \s)$) be the moduli stacks of $G$-bundles (resp. $B$-bundles) over fibers of $\pi$ together with a trivialization along $\s$. Let 
$\Om_{G,\pi,\s}$  (resp. $\Om_{B,\pi,\s}$) denote the descent of the cotangent bundles of $\Bun_{G,1}(\pi, \s)$ (resp. $\Bun_{B,1}(\pi, \s)$) to $\Bun_{G}(\pi)$ (resp. $\Bun_{B}(\pi)$).  
Let $\frp_{\s}: \Bun_{B,1}(\pi, \s)\to \Bun_{G,1}(\pi, \s)$ be the induction map, giving rise to the correspondence
\begin{equation*}
\xymatrix{ \Om_{B,\pi,\s} & \Bun_{B}(\pi)\times_{\Bun_{G}(\pi)}\Om_{G,\pi,\s}\ar[l]_-{d\frp_{\s}}\ar[r]^-{\frp^{\na}_{\s}}  & \Om_{G,\pi,\s} 
}
\end{equation*}
Define
\begin{equation*}
\un\cN^{\Eis}_{\pi,\s}=\frp^{\na}_{\s}((d\frp_{\s}^{-1}(0_{\Bun_{B}(\pi)})))\subset \Om_{G,\pi,\s}.
\end{equation*}
We have the natural maps
\begin{equation*}
\xymatrix{
\frq: T^{*}\Bun_{G,B}(\pi, \sigma)\ar[r] &  \Om_{G, \pi,\sigma}
&
\frq': T^{*}\Bun_{G,N}(\pi, \sigma)\ar[r] &  \Om_{G, \pi,\sigma}
}\end{equation*}
 given by the differentials of the respective projections
$\Bun_{G,1}(\pi,\sigma)\to \Bun_{G,B}(\pi,\sigma)$
and $\Bun_{G,1}(\pi,\sigma)\to \Bun_{G,N}(\pi,\sigma)$.

\begin{defn}\label{def:univ cone Eis}
\begin{enumerate}
\item The universal Eisenstein cone $\cN^{\Eis, B}_{\pi, \s}\subset T^{*}\Bun_{G,B}(\pi,\s)$ is defined to be $\frq^{-1}(\un\cN^{\Eis}_{\pi,\s})$.
\item The universal Eisenstein cone $\cN^{\Eis, N}_{\pi, \s}\subset T^{*}\Bun_{G,N}(\pi,\s)$ 
is defined to be $(\frq')^{-1}(\un\cN^{\Eis}_{\pi,\s})$.\end{enumerate}
\end{defn}

\begin{remark}\label{rem: cone B vs N}
It is elementary to check: for the natural $R_{\s/S}(H)$-torsor $q:\Bun_{G,N}(\pi,\sigma)\to \Bun_{G,B}(\pi,\sigma)$
and the $R_{\s/S}(G/B)$-fibration $r:\Bun_{G,B}(\pi,\sigma)\to \Bun_{G}(\pi)$, see \eqref{forg level}, we have
\begin{equation*}
\oll{q}(\cN^{\Eis,B}_{\pi,\s}) = \cN^{\Eis,N}_{\pi,\s},
\quad \orr{q}(\cN^{\Eis,N}_{\pi,\s}) = \cN^{\Eis,B}_{\pi,\s},
\quad \orr{r}(\cN^{\Eis,B}_{\pi,\s}) = \cN^{\Eis}_{\pi}.
\end{equation*}
\end{remark}

The main result of this subsection is: 
\begin{theorem}\label{th:univ cone Iw} Let $G$ be any reductive group over $k$.
\begin{enumerate}
    \item The universal Eisenstein cone $\cN^{\Eis,B}_{\pi,\s}$ is a closed conic Lagrangian inside $T^{*}\Bun_{G,B}(\pi,\s)$. 
    \item The projection $P^{B}_{\pi,\s}$ restricts to a universal homeomorphism $P^{\Eis,B}_{\pi,\s}: \cN^{\Eis,B}_{\pi,\s}\to \cN^{\rel,B}_{\pi,\s}$. 
    \item In particular, $\cN^{\Eis,B}_{\pi,\s}$ is the unique lifting of the relative nilpotent cone $\cN^{\rel,B}_{\pi,\s}$ as in Proposition \ref{p:lifting Lag}.
\end{enumerate}
Analogous statements are true for $\cN^{\Eis,N}_{\pi,\s}$.
\end{theorem}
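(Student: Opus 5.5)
The plan follows the proof of Theorem \ref{th:Eis cone closed} step by step, with $\un\cN^{\Eis}_{\pi,\s}\subset\Om_{G,\pi,\s}$ as the intermediate object, and it treats the $B$-level structure first.

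\textbf{Step 1: bijection onto the relative cone.} We first show that $P^{B}_{\pi,\s}$ maps $\cN^{\Eis,B}_{\pi,\s}$ onto $\cN^{\rel,B}_{\pi,\s}$ and is bijective on geometric points. Surjectivity is fiberwise. Lemma \ref{l:Cart Lag} applied to the base change $\{s\}\to S$ identifies the image in $T^*\Bun_{G,B}(\frX_s,\s_s)$ with $\cN^{\Eis,B}_{\frX_s,\s_s}$. By Proposition \ref{p:two cones B}, this equals $\cN^B_{\frX_s,\s_s}$. Injectivity uses the same argument as in Lemma \ref{l:NEis bij Nrel}. Suppose $\wt\ph,\wt\ph'$ are two lifts of the same point, with difference $\th\in T^*_sS$, witnessed by $B$-reductions $\cE_B,\cE'_B$ of $\cE$ such that $\ph\in\cohog{0}{\cE_B(\frn)\ot\om(\s)}$. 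The Spaltenstein chain of $B$-reductions and the parabolic-factorization step go through verbatim, with $\Bun_{P_j,1}(\pi,\s)$ and $\Om_{P_j,\pi,\s}$ replacing $\Bun_{P_j}(\pi)$. The only inputs are the injectivity of $T^*_sS\to$ (the fiber of $\Om_{B,\pi,\s}$), which holds because $\Bun_{B,1}(\pi,\s)\to S$ is smooth and surjective, and the fact that $\frq$ is compatible with the projections to $T^*_sS$. The result is that $\th$ is killed by $d\frp_\s$ at $\cE'_B$, hence $\th=0$. This gives injectivity for $\un\cN^{\Eis}_{\pi,\s}\to$ (its relative version), and pulling back along $\frq$ gives the statement for $\cN^{\Eis,B}_{\pi,\s}$.

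\textbf{Step 2: Lagrangian.} For constructibility and isotropy, we decompose by degree $\l$. Lemma \ref{l:nilcone B alt} holds in families, so $\cN^{\Eis,B}_{\pi,\s}=\bigcup_\l\orr{\a^\l}\oll{\b^\l}(T^*R_{\s/S}(B\bs G/B))$ with $\a^\l$ of finite type. A relative form of isotropy is needed here. The cotangent bundle $T^*R_{\s/S}(B\bs G/B)$ is isotropic: étale locally on $S$ it is $S\times(B\bs G/B)^{\Sigma}$, and we take the product of the zero section of $T^*S$ with the isotropic $T^*(B\bs G/B)^\Sigma$. Lemma \ref{l:pres iso} then makes each piece isotropic. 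On a finite-type open $\cU$, Step 1 together with the finite-type property of $\cN^{\rel,B}_{\pi,\s}$ shows that finitely many $\l$ suffice, exactly as in \S\ref{sss:NEis const}. Equidimensionality follows from Lemma \ref{l:rel nilp cone} and the bijection.

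\textbf{Step 3: closedness.} We expect closedness to be the main obstacle. Since $\cN^{\Eis,B}_{\pi,\s}=\frq^{-1}(\un\cN^{\Eis}_{\pi,\s})$, it suffices to show that $\un\cN^{\Eis}_{\pi,\s}$ is closed in $\Om_{G,\pi,\s}$. First we reduce to $\GL_n$ via a family version of Lemma \ref{l:Eis GG'}, using the wrong-way map $\r$ built from the decomposition $\frg'^*=\frg^*\op V^*$; that lemma needs no semisimplicity. Its final step reduces to the pointwise fact that a $G$-Higgs field with poles is nilpotent if and only if its image in $\frg'$ is. For $\GL_n$ we redo Lemma \ref{l:ov W} with Laumon's compactification $\ov\Bun^\l_{B,1}(\pi,\s)$ (flags of subsheaves, trivialized along $\s$). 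Here is the delicate point: degenerations of the flag may occur at $\s$, where the trivialization interacts badly with non-saturated subsheaves. I would first try to avoid this by working on $\Om$, i.e.\ with $\Bun_B(\pi)$ and no trivialization, and twisting by $\om(\s)$. The auxiliary stack $Z$ with smooth $\b$ then works unchanged, since the level structure only enters through the bundle $\Om$. The rest is the finite-$\l$ argument of \S\ref{sss:NEis closed GLn}. This argument avoids hypothesis \eqref{hyp} except through the dimension count. Where \eqref{hyp} is needed, we add étale-local multisections $\s'\supset\s$ and use that $\oll{\nu}$ along the smooth surjection $\nu:\Bun_{G,B}(\pi,\s')\to\Bun_{G,B}(\pi,\s)$ preserves and reflects all the properties in question.

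\textbf{Step 4: conclusion.} As in the end of \S\ref{sss:NEis closed gen}, $P^{\Eis,B}_{\pi,\s}$ is affine (because the cone is closed), bijective on geometric points, hence finite, hence a universal homeomorphism. Part (3) then follows from Proposition \ref{p:lifting Lag}. For the $N$-version, the map $q$ is a smooth torsor and $\cN^{\Eis,N}_{\pi,\s}=\oll{q}(\cN^{\Eis,B}_{\pi,\s})$ by Remark \ref{rem: cone B vs N}, so every property transfers, as in Corollary \ref{c:NN}.
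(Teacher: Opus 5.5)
Your Steps 1, 2 and 4 follow the paper's proof. There is one slip in Step 2: the full cotangent $T^*R_{\s/S}(B\bs G/B)$ is not isotropic, and transporting it would put $d\Pi^B_\s(T^*S)$ into the cone. You need its vertical part, which \'etale locally is $0_S\times T^*(B\bs G/B)^{\Sig}$. The paper avoids this by an \'etale base change, so that $\b$ lands in the constant stack $(B\bs G/B)^{\Sig}$.

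The genuine gap is Step 3 for $\GL_n$ with level structure, which is the only genuinely new part compared to Theorem \ref{th:Eis cone closed}. You propose to use Laumon's $\ov\Bun^{\l}_{B}(\pi)$ with no data along $\s$ and to ``twist by $\om(\s)$''. This does not define anything that can be pushed properly into $\Om_{G,\pi,\s}$. The only smooth stack over $\Bun_{G,1}(\pi,\s)$ you can build from it is $\ov\Bun^{\l}_{B}(\pi)\times_{\Bun_{G}(\pi)}\Bun_{G,1}(\pi,\s)$. On its open part $\Bun_{B,1}\twtimes{B^{\Sig}}G^{\Sig}$, the transport of the zero section kills the $\frg^{\Sig}$-directions. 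So it yields only Higgs fields with no poles at $\s$, never the ramified cone.

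What is missing is a compactification of evaluation at $\s$. The polar part is governed by the position of the residue relative to the flag at $\s$. You must therefore say what replaces $B\bs G/B$ when the flag degenerates there. The paper uses lax flags: it takes $\ov\b^\l_\s:\ov\cB^\l_\s\to(G\bs(\fl^{\lax}_n\times\fl_n))^{\Sig}$, transports the full cotangent of that stack, and gets closedness from properness of $\ov\a^\l_\s$.

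Accordingly, the auxiliary argument does not carry over unchanged. One needs $Y_{m+1}$ rather than $Y_m$, so that reduction modulo the maximal ideal gives honest $i$-dimensional spaces and hence a map to $\fl^{\lax}_n$. Injectivity of $d\b$ must also be replaced by relative smoothness of $(b,\d):Z^B_\s\to Y^B_\s\times_{(B\bs\fl_n)^{\Sig}}\cB^{\l'}_\s$ and the exact sequence it gives. That sequence is what shows a covector coming from the lax-flag stack at the degenerate point comes from $(B\bs\fl_n)^{\Sig}$ at the saturated point. Without such a device, closedness is not established, and neither is the finiteness your Step 4 relies on.
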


Again we first record some consequences.
Similar to Corollary \ref{c:univ cone non-char}, we have: 
\begin{cor} The universal Eisenstein cones $\cN^{\Eis,B}_{\pi,\s}$ and $\cN^{\Eis,N}_{\pi,\s}$ are non-characteristic with respect to the projections $\Pi^{B}_{\s}:\Bun_{G,B}(\pi,\s)\to S$ and $\Pi^{N}_{\s}:\Bun_{G,N}(\pi,\s)\to S$ respectively. 
\end{cor}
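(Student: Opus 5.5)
The plan is to copy the proof of Corollary \ref{c:univ cone non-char}, replacing Theorem \ref{th:Eis cone closed} by Theorem \ref{th:univ cone Iw}(2). I treat $\cN^{\Eis,B}_{\pi,\s}$ first; the $N$ case is identical, using the $N$-analogue of Theorem \ref{th:univ cone Iw}. By definition, non-characteristic means that the intersection $\cN^{\Eis,B}_{\pi,\s}\cap d\Pi^{B}_{\s}(T^{*}S)$ lies in the zero section. Since both sets are constructible, it suffices to check this on $K$-points for algebraically closed $K\supset k$, and after base change we may take $K=k$.

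Let $\xi$ be a $k$-point of the intersection, lying over $(s,\cE,\cE_{\s,B})\in\Bun_{G,B}(\pi,\s)(k)$. Because $\xi\in d\Pi^{B}_{\s}(T^{*}_{s}S)$, its image under $P^{B}_{\pi,\s}$ in the relative cotangent stack $T^{*}(\Pi^{B}_{\s})$ is the zero covector at $(s,\cE,\cE_{\s,B})$. The zero covector $0_{(s,\cE,\cE_{\s,B})}$ also lies in $\cN^{\Eis,B}_{\pi,\s}$. To see this, recall that $\frq$ is linear on fibers, so $\frq(0)=0$ in $\Om_{G,\pi,\s}$. Moreover, the zero covector of $\Om_{G,\pi,\s}$ at $(s,\cE)$ lies in $\un\cN^{\Eis}_{\pi,\s}$, since we can choose any $B$-reduction $\cE_B$ of $\cE$ (possible because $\frp$ is surjective on $k$-points, as $\frX_s$ is a curve) and take the zero covector, which lies in $d\frp_{\s}^{-1}(0)$. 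Hence $0\in\frq^{-1}(\un\cN^{\Eis}_{\pi,\s})=\cN^{\Eis,B}_{\pi,\s}$.

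So $\xi$ and $0_{(s,\cE,\cE_{\s,B})}$ are two $k$-points of $\cN^{\Eis,B}_{\pi,\s}$ with the same image under $P^{B}_{\pi,\s}$. By Theorem \ref{th:univ cone Iw}(2), $P^{\Eis,B}_{\pi,\s}$ is a universal homeomorphism, hence injective on geometric points, and therefore $\xi=0$.

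For $N$ the same argument applies with $\frq'$, $\Pi^{N}_{\s}$ and $P^{\Eis,N}_{\pi,\s}$. Alternatively, one can note that $\cN^{\Eis,N}_{\pi,\s}=\oll{q}(\cN^{\Eis,B}_{\pi,\s})$ by Remark \ref{rem: cone B vs N}, with $q$ smooth over $S$; a covector pulled back from $S$ under $\Pi^{N}_{\s}=\Pi^{B}_{\s}\circ q$ then comes from a covector on $\Bun_{G,B}$ that is itself pulled back from $S$, and injectivity of $dq$ gives the result. There is no real obstacle here: the only points that need checking are that the zero section lies in the cone and that Theorem \ref{th:univ cone Iw}(2) gives injectivity on points.
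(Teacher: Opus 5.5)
Your proposal is correct and is the argument the paper intends: the corollary is stated as holding ``similar to Corollary \ref{c:univ cone non-char}'', i.e.\ the zero section lies in the cone, and $P^{\Eis,B}_{\pi,\s}$ is injective on geometric points by Theorem \ref{th:univ cone Iw}(2). The $N$ case is handled the same way, and your alternative reduction via $\cN^{\Eis,N}_{\pi,\s}=\oll{q}(\cN^{\Eis,B}_{\pi,\s})$ together with the injectivity of $dq$ is also valid.
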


\sss{Compatibility with base change}
Let $S'$ be a stack smooth over $k$, and $t: S'\to S$ be a map of stacks. Let $\pi':\frX'=\frX\times_{S}S'\to S'$ be the base change of $\pi$, and $\s'=\s\times_{S}S'$. Note the natural identification
$\Bun_{G,B}(\pi,\s)\times_{S}S'\simeq\Bun_{G,B}(\pi',\s')$. Let
\begin{equation*}
\xymatrix{
\th_{G,B}: \Bun_{G,B}(\pi',\s')\ar[r] &  \Bun_{G,B}(\pi,\s)
}
\end{equation*}
be the natural projection. Passing to cotangent bundles we have the map
\begin{equation*}
\xymatrix{
d\th_{G,B}: \Bun_{G,B}(\pi',\s')\times_{\Bun_{G,B}(\pi,\s)}T^{*}\Bun_{G,B}(\pi,\s)\cong S'\times_{S} T^{*}\Bun_{G,B}(\pi,\s) \ar[r] &  T^{*}\Bun_{G,B}(\pi',\s').
}
\end{equation*}

We have the following extension of Lemma \ref{l:Eis cone bc}.

\begin{prop}\label{p:Eis cone level bc} The map $d\th_{G,B}$ restricts to a universal homeomorphism $S'\times_{S}\cN^{\Eis,B}_{\pi,\s}\to \cN^{\Eis,B}_{\pi',\s'}$. In particular, we have $\cN^{\Eis,B}_{\pi',\s'}=\oll{\th_{G,B}}(\cN^{\Eis,B}_{\pi,\s})$.

Same statements hold when $B$-level structure is replaced by $N$-level structure. 
\end{prop}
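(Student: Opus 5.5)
The argument copies the proof of Lemma \ref{l:Eis cone bc}, with Theorem \ref{th:univ cone Iw} playing the role of Theorem \ref{th:Eis cone closed}. It has two steps. First, $d\th_{G,B}$ maps $S'\times_{S}\cN^{\Eis,B}_{\pi,\s}$ into $\cN^{\Eis,B}_{\pi',\s'}$. Second, this map sits over the canonical isomorphism of relative nilpotent cones, and both vertical projections are universal homeomorphisms.

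\textbf{Step 1: the inclusion.} I will work with the auxiliary spaces $\Om_{G,\pi,\s}$ and $\Om_{B,\pi,\s}$. They commute with base change: $S'\times_S\Om_{G,\pi,\s}\to \Om_{G,\pi',\s'}$ is induced by the differential of $\Bun_{G,1}(\pi',\s')\to \Bun_{G,1}(\pi,\s)$, and similarly for $B$. The same Cartesian-square diagram chase as in Lemma \ref{l:Eis cone bc}, applied to $\frp_\s$ and $\frp_{\s'}$, shows that $S'\times_S\un\cN^{\Eis}_{\pi,\s}$ maps into $\un\cN^{\Eis}_{\pi',\s'}$, because $S'\times_S 0_{\Bun_B(\pi)}$ maps onto $0_{\Bun_B(\pi')}$. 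Pointwise this is transparent. A point of $\cN^{\Eis,B}_{\pi,\s}$ over $s'\mapsto s$ is given by $\wt\ph$ together with a $B$-reduction $\cE_B$ of $\cE$ on $\frX_s$ such that $d\frp(\wt\ph)=0$ at $\cE_B$. Pulling back along $d\th_{G,B}$ keeps this vanishing, since it is compatible with $d\th_B$ on $\Bun_B$-level cotangent spaces.

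Next I check that $\frq$ is compatible with base change, i.e. that the square formed by $\frq$, $\frq_{\pi'}$ and the two $d\th$ maps commutes. This follows from functoriality of differentials for the commuting square of projections $\Bun_{G,1}\to\Bun_{G,B}$ over $S'\to S$. Together with the definition $\cN^{\Eis,B}=\frq^{-1}(\un\cN^{\Eis})$, it gives the inclusion $d\th_{G,B}(S'\times_S\cN^{\Eis,B}_{\pi,\s})\subset\cN^{\Eis,B}_{\pi',\s'}$.

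\textbf{Step 2: the universal homeomorphism.} I form the commutative square with top row $d\th^{\Eis}:S'\times_S\cN^{\Eis,B}_{\pi,\s}\to\cN^{\Eis,B}_{\pi',\s'}$ and bottom row $d\th^{\rel}:S'\times_S\cN^{\rel,B}_{\pi,\s}\to\cN^{\rel,B}_{\pi',\s'}$, with vertical maps $\id_{S'}\times P^{B}_{\pi,\s}$ and $P^{B}_{\pi',\s'}$, and give all stacks the reduced structure.

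The bottom map is an isomorphism. Relative cotangent stacks, the Hitchin map and the zero section $0_S$ all commute with base change, so $S'\times_S T^*(\Pi^B_\s)\cong T^*(\Pi^B_{\s'})$ compatibly with the Hitchin maps.

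By Theorem \ref{th:univ cone Iw}(2), $P^{B}_{\pi,\s}$ and $P^{B}_{\pi',\s'}$ are universal homeomorphisms. Base change preserves universal homeomorphisms, so the left vertical map is one as well. Since the square commutes, $d\th^{\Eis}$ composed with a universal homeomorphism equals a universal homeomorphism. Hence $d\th^{\Eis}$ is bijective on geometric points and universally closed. It is universally closed because the composite is, the vertical map is separated, and $P^B_{\pi',\s'}$ is affine; for the last point I use that it is finite, as in the end of the proof of Theorem \ref{th:Eis cone closed}. Therefore $d\th^{\Eis}$ is a universal homeomorphism.

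The equality $\cN^{\Eis,B}_{\pi',\s'}=\oll{\th_{G,B}}(\cN^{\Eis,B}_{\pi,\s})$ is surjectivity of $d\th^{\Eis}$, by the definition of $\oll{\th_{G,B}}$.

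\textbf{The $N$-level case.} I run the same argument with $\frq'$, $\cN^{\Eis,N}$ and $\cN^{\rel,N}$, using the $N$-version of Theorem \ref{th:univ cone Iw}. Alternatively, I can deduce it from the $B$ case via $\oll{q}(\cN^{\Eis,B}_{\pi,\s})=\cN^{\Eis,N}_{\pi,\s}$ (Remark \ref{rem: cone B vs N}), Lemma \ref{l:Cart Lag}, and the fact that $q$ is compatible with base change.

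\textbf{Main obstacle.} The only step that is not formal is Step 1: checking that the level-structure bookkeeping (the descents $\Om$, and the maps $\frq$, $\frp_\s$) commutes with base change. I expect the cleanest route is pointwise. By Theorem \ref{th:univ cone Iw}(2), the Eisenstein cones are determined by their geometric points: they are reduced closed substacks in bijection with $\cN^{\rel,B}$. So it suffices to check the membership criterion on $k$-points, which is the explicit description given after Definition \ref{def:univ cone Eis}. That description is evidently stable under pullback along $S'\to S$.
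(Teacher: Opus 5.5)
Your argument is correct. It follows the paper's proof in spirit, but the comparison with the relative nilpotent cone happens at a different level.

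\textbf{The paper's route.} It runs the argument of Lemma \ref{l:Eis cone bc} at the level of the descended bundles $\Om_{G,\pi,\s}$. This yields the bijection \eqref{bc un cN} from the pointwise Lemma \ref{l:Om bij rel}. It then transfers that bijection to $T^{*}\Bun_{G,B}$ using two facts: the square \eqref{Cart th} is Cartesian, not merely commutative, and $\cN^{\Eis,B}_{\pi,\s}=\frq^{-1}(\un\cN^{\Eis}_{\pi,\s})$. The $N$-case is identical with $T^{*}\Bun_{G,N}$ in place of $T^{*}\Bun_{G,B}$, so the level structure never has to be examined separately.

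\textbf{Your route.} You use \eqref{Cart th} only for commutativity, which gives the inclusion. You then compare with $\cN^{\rel,B}$ directly on $T^{*}\Bun_{G,B}$. This costs you the full Theorem \ref{th:univ cone Iw}(2) for both families. In exchange, it gives the universal homeomorphism in one step. The paper's text only concludes bijectivity, and upgrading it needs closedness of $\un\cN^{\Eis}_{\pi,\s}$ either way.

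\textbf{Logical order.} One point deserves a sentence in your write-up. The proof of Theorem \ref{th:univ cone Iw} itself makes \'etale base changes of $S$, in the Lagrangian step and the closedness step. So it uses the \'etale case of this very proposition. This is not circular, for either of two reasons:
\begin{enumerate}
\item For \'etale $S'\to S$, the maps $d\th_{G,B}$ and the analogous maps for $\Bun_{B}$, $\Bun_{B,1}$ and $\Om$ are isomorphisms on cotangent fibers. The defining conditions of the two cones therefore match verbatim, with no appeal to the theorem.
\item Bijectivity alone only needs the pointwise bijection $\cN^{\Eis,B}_{\pi,\s}\to\cN^{\rel,B}_{\pi,\s}$. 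The paper deduces that from Lemma \ref{l:Om bij rel} without any base change.
\end{enumerate}
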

\begin{proof} Let $d\un\th_{1}: S'\times_{S}\Om_{G,\pi,\s}\to \Om_{G,\pi',\s'}$ be the map induced by the differential of $\th_{1}: \Bun_{G,1}(\pi', \s')\to \Bun_{G,1}(\pi,\s)$. The same argument of Lemma \ref{l:Eis cone bc} shows that $d\un\th_{1}$ restricts to a bijection
\begin{equation}\label{bc un cN}
S'\times_{S}\un\cN^{\Eis}_{\pi,\s}\isom \un\cN^{\Eis}_{\pi',\s'}.
\end{equation}
We have a Cartesian diagram
\begin{equation}\label{Cart th}
\xymatrix{ S'\times_{S}T^{*}\Bun_{G,B}(\pi,\s)\ar[d]_{\id_{S'}\times\frq} \ar[r]^-{d\th_{G,B}} & T^{*}\Bun_{G,B}(\pi',\s')\ar[d]^{\frq'} \\
S'\times_{S}\Om_{G,\pi,\s} \ar[r]^-{d\un\th_{1}} & \Om_{G,\pi',\s'}}
\end{equation}
By the definition of $\cN^{\Eis,B}_{\pi,\s}$, we get
\begin{equation*}
S'\times_{S}\cN^{\Eis,B}_{\pi,\s}=(\id_{S'}\times\frq)^{-1}(S'\times_{S}\un\cN^{\Eis}_{\pi,\s}), \quad \cN^{\Eis,B}_{\pi',\s'}=(\frq')^{-1}(\un\cN^{\Eis}_{\pi',\s'}).
\end{equation*}
Therefore the desired bijection follows from \eqref{bc un cN} and the Cartesian diagram \eqref{Cart th}.

The statements about $N$-level structure is proved similarly. 
\end{proof}

Now we begin the proof of Theorem \ref{th:univ cone Iw}, following the same lines of argument for Theorem \ref{th:Eis cone closed}. We will give the proof for $\cN^{\Eis,B}_{\pi,\s}$ only; the case of $\cN^{\Eis,N}_{\pi,\s}$ can be proved similarly or deduced from Remark~\ref{rem: cone B vs N}.

\sss{$P^{B}_{\pi,\s}$ induces a bijection $\cN^{\Eis,B}_{\pi,\s}\to\cN^{\rel,B}_{\pi,\s}$} Let $\Om^{\rel}_{G,\pi,\s}\to \Bun_{G}(\pi,\s)$ be the descent of the relative cotangent $T^{*}(\Pi^{1}_{\s})$ (where $\Pi^{1}_{\s}: \Bun_{G,1}(\pi,\s)\to S$) to $\Bun_{G}(\pi,\s)$. We have a Cartesian diagram
\begin{equation*}
\xymatrix{T^{*}\Bun_{G,B}(\pi,\s)\ar[r]^-{\frq}\ar[d]_{P^{B}_{\pi,\s}} & \Om_{G,\pi,\s}\ar[d]^{\un P_{\pi,\s}}\\
T^{*}\Pi^{B}_{\s} \ar[r]^-{\frq^{\rel}} & \Om^{\rel}_{G,\pi,\s}}
\end{equation*}
Let $\un\cN^{\rel}_{\pi,\s}\subset \Om^{\rel}_{G,\pi,\s}$ be the zero fiber of the relative version of the Hitchin map $\Bun_{G,1}(\pi,\s)$. Then $\cN^{\rel,B}_{\pi,\s}=\frq^{\rel,-1}(\un\cN^{\rel}_{\pi,\s})$ as can be checked fiberwise over $S$.  Since $\cN^{\Eis,B}_{\pi,\s}=\frq^{-1}(\un\cN^{\Eis}_{\pi,\s})$, to show $P^{B}_{\pi,\s}$ maps $\cN^{\Eis,B}_{\pi,\s}$ to $\cN^{\rel, B}_{\pi,\s}$ and restricts to a bijection on geometric points, it suffices to show:

\begin{lemma}\label{l:Om bij rel}
The map $\un P_{\pi,\s}$ maps $\un\cN^{\Eis}_{\pi,\s}$ to $\un\cN^{\rel}_{\pi,\s}$, and restricts to a bijection on geometric points.
\end{lemma}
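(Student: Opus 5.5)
The plan is to repeat the argument of Lemma \ref{l:NEis bij Nrel}, with $\Bun_{B}(\pi)\to\Bun_{G}(\pi)$ replaced by the induction map $\frp_{\s}:\Bun_{B,1}(\pi,\s)\to\Bun_{G,1}(\pi,\s)$. We work with the descended bundles $\Om_{B,\pi,\s}$ and $\Om_{G,\pi,\s}$. Base changing from $k$ to an algebraically closed $K\supset k$, it suffices to treat $k$-points.

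\textbf{Surjectivity onto $\un\cN^{\rel}_{\pi,\s}$.} Fix $s\in S(k)$. Apply Lemma \ref{l:Cart Lag} to the Cartesian square formed by $\Bun_{B,1}(\frX_s,\s_s)\to\Bun_{G,1}(\frX_s,\s_s)$ over $\Bun_{B,1}(\pi,\s)\to\Bun_{G,1}(\pi,\s)$. The fiber of $\un P_{\pi,\s}(\un\cN^{\Eis}_{\pi,\s})$ over $s$ is then $\un\cN^{\Eis}_{\frX_s,\s_s}$. Equivalently, one may argue directly after descending along the $G^\s$- and $B^\s$-torsors, which are compatible with base change to $s$. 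By the first paragraph of the proof of Proposition \ref{p:two cones B}, $\un\cN^{\Eis}_{\frX_s,\s_s}$ is the zero fiber of the Hitchin map on $\Om_{G,\frX_s,\s_s}$. That zero fiber is exactly the fiber of $\un\cN^{\rel}_{\pi,\s}$ over $s$. Hence $\un P_{\pi,\s}$ maps $\un\cN^{\Eis}_{\pi,\s}$ onto $\un\cN^{\rel}_{\pi,\s}$.

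\textbf{Injectivity.} Suppose $(s,\cE,\wt\ph)$ and $(s,\cE,\wt\ph')$ lie in $\un\cN^{\Eis}_{\pi,\s}$ and lie over the same $(s,\cE,\ph)$. Their difference $\th$ lies in the image of $T^*_sS$. We are given $B$-reductions $\cE_B$ and $\cE'_B$ with $d\frp_{\s,\cE_B}(\wt\ph)=0$ and $d\frp_{\s,\cE'_B}(\wt\ph')=0$. Here $d\frp_{\s,\cE_B}$ denotes the fiber map $\Om_{G,\pi,\s}|_{(s,\cE)}\to\Om_{B,\pi,\s}|_{(s,\cE_B)}$.

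Run the Spaltenstein chain argument verbatim. It takes place at the generic point of $\frX_s$, so the poles along $\s$ are irrelevant. It yields a chain $\cE_B=\cE^0_B,\dots,\cE^\ell_B=\cE'_B$ of $B$-reductions with consecutive relative positions simple reflections. It also gives $\ph\in\cohog{0}{\cE_{P_j}(\frn_{P_j})\ot\om(\s)}$ for each $j$. The induction then passes through $\Bun_{P_j,1}(\pi,\s)$ and its descended bundle $\Om_{P_j,\pi,\s}$. The step needs the kernel of $\Om_{G,\pi,\s}|_{(s,\cE)}\to\Om_{P_j,\pi,\s}|_{(s,\cE_{P_j})}$, modulo $T^*_sS$, to be $\cohog{0}{\cE_{P_j}(\frn_{P_j})\ot\om(\s)}$. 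This follows from the Serre-duality description of the fibers, $\cohog{0}{\cE(\frg^*)\ot\om(\s)}$ and $\cohog{0}{\cE_{P_j}(\frp_j^*)\ot\om(\s)}$, together with the exact sequence relating relative and absolute cotangent spaces over $S$. It also needs $T^*_sS\to\Om_{P_j,\pi,\s}$ and $T^*_sS\to\Om_{B,\pi,\s}$ to be injective, which holds because $\Bun_{B,1}(\pi,\s)\to S$ is smooth. The induction gives $d\frp_{\s,\cE'_B}(\wt\ph)=0$, hence $d\frp_{\s,\cE'_B}(\th)=0$, and injectivity of $T^*_sS$ into the fiber of $\Om_{B,\pi,\s}$ forces $\th=0$.

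\textbf{Main obstacle.} The delicate point is the setup of these cotangent exact sequences for the descended bundles $\Om$ rather than honest cotangent bundles. The descent is along the smooth torsors $\Bun_{G,1}\to\Bun_G$, so one can verify everything upstairs on $\Bun_{G,1}(\pi,\s)$ and $\Bun_{B,1}(\pi,\s)$. There the maps are genuine cotangent maps of smooth stacks over $S$, and the argument of Lemma \ref{l:NEis bij Nrel} applies directly. The remaining ingredients are formal.
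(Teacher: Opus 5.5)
Your proposal is correct and follows essentially the same route as the paper's proof sketch. For surjectivity you base change to the fiber over $s$ via Lemma \ref{l:Cart Lag} and use the fiberwise identification of $\un\cN^{\Eis}$ from the first paragraph of the proof of Proposition \ref{p:two cones B}; for injectivity you use the Spaltenstein chain of $B$-reductions, the inductive step through $\Om_{P_j,\pi,\s}$, and injectivity of $T^*_sS$ into the fiber of $\Om_{B,\pi,\s}$. Your added details (writing the chain condition with $\om(\s)$ rather than $\om$, and checking the cotangent exact sequences upstairs on $\Bun_{G,1}(\pi,\s)$ and $\Bun_{B,1}(\pi,\s)$ before descending) are exactly what the paper's sketch leaves implicit.
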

\begin{proof}[Proof sketch]
The argument is same as Lemma \ref{l:NEis bij Nrel}. The non-obvious part is the injectivity of $\un\cN^{\Eis}_{\pi,\s}(k)\to \un\cN^{\rel}_{\pi,\s}(k)$. If $(s,\cE, \wt\ph)$ and $(s,\cE, \wt\ph')\in \un\cN^{\Eis}_{\pi,\s}(k)$ both map to the same $(s,\cE,\ph)\in \un\cN^{\rel}_{\pi,\s}(k)$, then there are Borel reductions $\cE_{B}$ and $\cE'_{B}$ of $\cE$ such that $\wt\ph\in\ker(\Om_{G,\pi,\s}|_{(s,\cE)}\to \Om_{B,\pi,\s}|_{(s,\cE_{B})})$, and $\wt\ph'\in\ker(\Om_{G,\pi,\s}|_{(s,\cE)}\to \Om_{B,\pi,\s}|_{(s,\cE'_{B})})$. We can then connect $\cE_{B}$ and $\cE'_{B}$ by a chain of Borel reductions $\cE^{(i)}_{B}$ of $\cE$ with relative position a simple reflection for neighboring reductions, such that $\ph\in \cohog{0}{\frX_{s},\cE^{(i)}_{B}(\frn)\ot\om_{\frX_{s}}}$ for all $i$. One then shows inductively that $\wt\ph\in\ker(\Om_{G,\pi,\s}|_{(s,\cE)}\to \Om_{B,\pi,\s}|_{(s,\cE^{(i)}_{B})})$, hence eventually $\wt\ph\in\ker(\Om_{G,\pi,\s}|_{(s,\cE)}\to \Om_{B,\pi,\s}|_{(s,\cE'_{B})})$. Thus $T^{*}_{s}S\ni \wt\ph-\wt\ph'\in \ker(\Om_{G,\pi,\s}|_{(s,\cE)}\to \Om_{B,\pi,\s}|_{(s,\cE'_{B})})$, forcing $\wt\ph-\wt\ph'$ to be zero.
\end{proof}



\sss{$\cN^{\Eis,B}_{\pi,\s}$ is a Lagrangian}\label{sss:NEis level Lag}
To show that  $\cN^{\Eis,B}_{\pi,\s}$ is a Lagrangian, we may make an \'etale base change $\th: S'\to S$ and reduced to show that $\cN^{\Eis,B}_{\pi',\s'}$ is a Lagrangian. In particular, we may assume that $\s\to S$ is a trivial covering, i.e., $\s\cong S\times \Sigma$ for a finite set $\Sig$.

We first show $\cN^{\Eis,B}_{\pi,\s}$ is constructible and isotropic.  Similarly to Lemma \ref{l:nilcone B alt}, we have maps
\begin{equation}\label{beta}
\xymatrix{(B\bs G/B)^{\Sig} & \Bun_{B,1}(\pi,\s)\twtimes{B^{\Sig}}(G/B)^{\Sig}\cong \Bun_{B}(\pi)\times_{\Bun_{G}(\pi)}\Bun_{G,B}(\pi,\s) \ar[l]_-{\b}\ar[r]^-{\a} & \Bun_{G,B}(\pi,\s)}
\end{equation}
and an equality
\begin{equation}\label{NEis transport BGB}
\cN^{\Eis,B}_{\pi,\s}=\orr{\a}\oll{\b}(T^{*}(B\bs G/B)^{\Sig}).
\end{equation}
We can also restrict \eqref{beta} to the $\l$-component of $\Bun_{B,1}(\pi,\s)$ (where the restrictions of $\a$ and $\b$ are denoted $\a^{\l}$ and $\b^{\l}$), and set
\begin{equation}\label{cW level}
\cW^{\l}_{\s}:=\orr{\a^{\l}}\oll{\b^{\l}}(T^{*}(B\bs G/B)^{\Sig}).
\end{equation}
Lemma \ref{l:pres iso} shows that $\cW^{\l}_{\s}$ is isotropic. For any finite type open substack $\cU\subset \Bun_{G,B}(\pi,\s)$, we argue as in \S\ref{sss:NEis const} that finitely many $\cW^{\l}_{\s}$ cover $\cN^{\Eis,B}_{\pi,\s}\cap T^{*}\cU$, which implies that $\cN^{\Eis,B}_{\pi,\s}\cap T^{*}\cU$ is constructible and isotropic. Hence $\cN^{\Eis,B}_{\pi,\s}$ is constructible and isotropic.

By constructibility, the pointwise bijection between $\cN^{\Eis,B}_{\pi,\s}$ and $\cN^{\rel, B}_{\pi,\s}$ implies $\dim \cN^{\Eis,B}_{\pi,\s}=\dim \cN^{\rel, B}_{\pi,\s}$, and the latter has the same dimension as $\Bun_{G,B}(\pi,\s)$ by Lemma \ref{l:rel nilp cone}, we conclude that $\cN^{\Eis,B}_{\pi,\s}$ is a Lagrangian in $T^{*}\Bun_{G,B}(\pi,\s)$.

\sss{$\cN^{\Eis,B}_{\pi,\s}$ is closed} 
It suffices to show that $\un\cN^{\Eis}_{\pi,\s}$ is closed in $\Om_{G,\pi,\s}$. For this, we proceed as in \S\ref{sss:NEis change gp}--\S\ref{sss:NEis closed gen} to reduce to the case of $\GL_{n}$. Below we assume $G=\GL_{n}$. Making an \'etale base change we assume the cover $\s\to S$ is trivial, i.e., $\s$ is a disjoint union of sections $\t: S\to X$ indexed by $\t$ in some finite set $\Sig$. 

We consider a version of $\ov\Bun^{\l}_{B}(\pi)$ with $B$-level structures along $\s$. For a scheme $S'$ and a rank $n$ vector bundle $\cV$ over $S'$, a {\em lax flag} of $\cV$ is a chain of vector bundles together with maps over $S'$
\begin{equation*}
\cV_{1}\to \cV_{2}\to \cdots \cV_{n-1}\to \cV_{n}=\cV
\end{equation*}
such that $\cV_{i}$ has rank $i$. The maps are not required to be injective. Let $\fl^{\lax}_{n}$ be the moduli space over $k$ whose $S'$-points are the set of lax flags of the trivial vector bundle $\cO_{S'}^{\op n}$. Then $G$ acts on $\fl^{\lax}_{n}$ by acting on the trivial bundle. Also $\fl^{\lax}_{n}$ contains the flag variety $\fl_{n}$ of $G$ as an open subscheme.

Let $\ov\cB^{\l}_{\s}=\ov\Bun^{\l}_{B}(\pi)\times_{\Bun_{G}(\pi)}\Bun_{G,B}(\pi,\s)$, which contains $\cB^{\l}_{\s}=\Bun^{\l}_{B}(\pi)\times_{\Bun_{G}(\pi)}\Bun_{G,B}(\pi,\s)$ as an open substack. Since $\ov\Bun^{\l}_{B}(\pi)$ is smooth over $S$, and $\Bun_{G,B}(\pi,\s)\to \Bun_G(\pi)$ is smooth, $\ov\cB^{\l}_{\s}$ is also smooth over $S$.

Restricting a point of $\ov\Bun^{\l}_{B}(\pi)$ along a section $\t\in \Sig$ gives a lax flag of the underlying rank $n$ bundle, hence a canonical map $\ev_{\t}: \ov\Bun^{\l}_{B}(\pi)\to G\bs \fl^{\lax}_{n}$. Together with the information of the extra $B$-reduction along $\t$, we get a map
\begin{equation*}
\ov\b^{\l}_{\t}: \ov\cB^{\l}_{\s}\to G\bs (\fl^{\lax}_{n}\times \fl_{n}).
\end{equation*}
Taking product over all $\t\in \Sig$ we get a map $\ov\b^{\l}_{\s}=\prod_{\t\in \Sig}\ov\b^{\l}_{\t}: \ov\cB^{\l}_{\s}\to (G\bs (\fl^{\lax}_{n}\times \fl_{n}))^{\Sig}$. Together with the projection to $\Bun_{G,B}(\pi,\s)$ we have a diagram
\begin{equation*}
\xymatrix{(G\bs (\fl^{\lax}_{n}\times \fl_{n}))^{\Sig} &\ar[l]_-{\ov\b^{\l}_{\s}} \ov\cB^{\l}_{\s}\ar[r]^-{\ov\a^{\l}_{\s}} &  \Bun_{G,B}(\pi,\s). }
\end{equation*}
Let
\begin{equation*}
\ov\cW^{\l}_{\s}:=\orr{\ov\a^{\l}_{\s}}\oll{\ov\b^{\l}_{\s}}(T^{*}(G\bs (\fl^{\lax}_{n}\times \fl_{n}))^{\Sig})\subset T^{*}\Bun_{G,B}(\pi,\s).
\end{equation*}
Since $\ov\a^{\l}_{\s}$ is proper (being the base change of $\ov\frp^{\l}:\ov\Bun^{\l}_{B}(\pi)\to \Bun_{G}(\pi)$), $\ov\cW^{\l}_{\s}$ is closed in $T^{*}\Bun_{G,B}(\pi,\s)$.  Once we prove the following lemma, which is an analog of Lemma \ref{l:ov W}, the rest of the argument is the same as in the case without level structures. 


\begin{lemma}\label{l:ov W level}
We have
\begin{equation*}
\cN^{\Eis, B}_{\pi,\s}=\bigcup_{\l\in \ZZ^{n}}\ov\cW^{\l}_{\s}.
\end{equation*}
\end{lemma}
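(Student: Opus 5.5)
The plan is to mirror the proof of Lemma \ref{l:ov W}, replacing $\ov\Bun^{\l}_{B}(\pi)$ by $\ov\cB^{\l}_{\s}$ and the zero section by the pullback of $T^{*}(G\bs(\fl^{\lax}_{n}\times\fl_{n}))^{\Sig}$. The inclusion $\subset$ is immediate. By \eqref{NEis transport BGB}, $\cN^{\Eis,B}_{\pi,\s}=\cup_{\l}\cW^{\l}_{\s}$. On the open substack $\cB^{\l}_{\s}\subset\ov\cB^{\l}_{\s}$, the map $\ov\b^{\l}_{\s}$ restricts to $\cB^{\l}_{\s}\to (G\bs(\fl_{n}\times\fl_{n}))^{\Sig}=(B\bs G/B)^{\Sig}$, which agrees with $\b^{\l}$. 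Restricting covectors to an open substack and to an open subset of the target stack then gives $\cW^{\l}_{\s}\subset\ov\cW^{\l}_{\s}$.

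For the reverse inclusion we prove the analog of \eqref{ov W estimate}, namely $\ov\cW^{\l}_{\s}\subset\cup_{\l'\ge\l}\cW^{\l'}_{\s}$. After base change to an algebraically closed field, take a $k$-point $(s,\cE,\cE_{\Sig,B},\xi)$ of $\ov\cW^{\l}_{\s}$. It comes with a lax flag $\cE_{\bu}\subset\cE$, the given $B$-reductions at $\t(s)$, and a covector $\eta$ on $(G\bs(\fl^{\lax}_{n}\times\fl_{n}))^{\Sig}$ at $\ov\b^{\l}_{\s}(\cE_{\bu},\cE_{\Sig,B})$ with $d\ov\a(\xi)=d\ov\b(\eta)$. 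Let $\cE'_{\bu}$ be the saturation, of degree $\l'\ge\l$. We reuse the auxiliary stack of Lemma \ref{l:ov W}, now with level structure:
\begin{equation*}
Z_{\s}:=Z\times_{\Bun_{G}(\pi)}\Bun_{G,B}(\pi,\s).
\end{equation*}
It comes with $\a_{\s}:Z_{\s}\to\ov\cB^{\l}_{\s}$ and $\b_{\s}:Z_{\s}\to\cB^{\l'}_{\s}$, the latter smooth, over $\Bun_{G,B}(\pi,\s)$. We enlarge $\Sig$ by the torsion points of $\cE'_{\bu}/\cE_{\bu}$, making them sections distinct from the level points after étale base change. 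Near a level point $\t\in\Sig$, the flag $\cE_{\bu}$ may fail to be saturated. In that case we include the points $\t$ in the modification data, so that $Z_{\s}$ records trivializations of $\cE'_{\bu}$ on $\t^{(m)}$ together with the point of $Y_{m,\t}$. The lift $z$ is then built exactly as before.

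The key step, and the main obstacle, is transporting the condition on $\eta$. We need a covector $\eta'$ on $(B\bs G/B)^{\Sig}$ at $\b^{\l'}_{\s}(\b_{\s}(z))$ such that $d\b_{\s}(d\frp^{\l'}(\xi)-d\b^{\l'}(\eta'))=0$. Smoothness of $\b_{\s}$ (injectivity of $d\b_{\s}$) then gives $\xi\in\cW^{\l'}_{\s}$. The difficulty is that the evaluation at $\t$ factors differently through the lax flag and the saturated flag. Our first attempt is to avoid the issue by noting that $T^{*}(G\bs(\fl^{\lax}_{n}\times\fl_{n}))^{\Sig}$ pulls back, along the smooth cover, to covectors on $\prod_{\t}(\fl^{\lax}_{n}\times\fl_{n})$ annihilating the $\frg$-orbit directions. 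Composing with $\a_{\s}$, such a covector restricts on $Z_{\s}$ to a covector pulled back from the trivialization data at $\t^{(m)}$ and the $B$-level, which is invariant under the diagonal $G$-action. We then identify this pullback with the pullback of a $G$-invariant covector on $\fl_{n}\times\fl_{n}$ at the saturated flag, via the $G$-equivariant map sending the trivialization of $\cE'_{\bu}|_{\t}$ and the point of $\wt Y_{m,\t}$ to the lax flag. Because $d\b_{\s}$ is injective and the covectors in question all vanish along the $G$-orbits, this identification can be checked on the fibers of $\b_{\s}$, which are products of $\wt Y_{m,\t}$ and unipotent level groups. On those fibers the composite covector must vanish: it is a covector on a proper connected space pulled back from a conormal-type isotropic set. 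Once this is settled, the inclusion follows, and with it the lemma.
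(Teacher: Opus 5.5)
Your framework matches the paper's. Your $Z_{\s}$ is essentially its $Z^{B}_{\s}=\Bun^{\l'}_{B,m+1}(\pi,\Sig')\times\wt Y_{m+1,\Sig'}\times\fl_{n}^{\Sig}$ with $\Sig\subset\Sig'$, your $\b_{\s}$ is its map $\d$, and the plan of showing $\xi':=d\a^{\l'}_{\s}(\xi)\in\Im(d\b^{\l'}_{\s})$ via injectivity of $d\b_{\s}$ is right. There is, however, a genuine gap at the final step, which is the entire content of the lemma.

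The principle you invoke is false: a covector on a proper connected space pulled back from a conormal-type isotropic set need not vanish. A single cotangent vector at a point of $\PP^{1}$ is a counterexample. Moreover, the fibers of $\b_{\s}$ are not proper. They are $\wt Y_{m,\Sig'}$ times torsors for $\prod_{\t\in\Sig'}B(R_{m})$, and these include the non-unipotent directions of changing the trivialization modulo $t$, which you omitted.

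More importantly, vanishing on the fibers is automatic and carries no information. By commutativity, the pullback of $\eta$ to $Z_{\s}$ equals $d\a_{\s}\,d\ov\a^{\l}_{\s}(\xi)=d\b_{\s}(\xi')$, so it kills $\b_{\s}$-vertical vectors for free. A covector killing those vectors is only known to come from $\cB^{\l'}_{\s}$, not from $(B\bs G/B)^{\Sig}$. So ``the identification can be checked on the fibers of $\b_{\s}$'' is not a valid reduction, and $\eta'$ is never produced.

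What is missing is the transversality property the paper's argument rests on. Both relevant maps out of $Z^{B}_{\s}$ factor through the projection $b\colon Z^{B}_{\s}\to Y^{B}_{\s}=\wt Y_{m+1,\Sig'}\times\fl_{n}^{\Sig}$:
\begin{itemize}
\item the map to $(G\bs(\fl^{\lax}_{n}\times\fl_{n}))^{\Sig}$ factors through a map from $Y^{B}_{\s}$;
\item the map $\b^{\l'}_{\s}\circ\d$ factors through $\th\colon Y^{B}_{\s}\to(B\bs\fl_{n})^{\Sig}$.
\end{itemize}
The key fact is that $(b,\d)\colon Z^{B}_{\s}\to Y^{B}_{\s}\times_{(B\bs\fl_{n})^{\Sig}}\cB^{\l'}_{\s}$ is smooth. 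On tangent spaces, $db$ carries the $\d$-vertical vectors onto the whole $\th$-vertical space $T\wt Y_{m+1,\Sig'}\oplus\frb^{\Sig}\cdot x$, where $x$ is the $\fl_{n}^{\Sig}$-coordinate. Dually this gives the exact sequence $0\to T^{*}(B\bs\fl_{n})^{\Sig}\to T^{*}Y^{B}_{\s}\oplus T^{*}\cB^{\l'}_{\s}\to T^{*}Z^{B}_{\s}$.

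Let $\eta_{Y}$ be the pullback of $\eta$ to $Y^{B}_{\s}$. The pair $(\eta_{Y},\xi')$ has the same image in $T^{*}Z^{B}_{\s}$, so exactness produces $\eta'$ with $\xi'=d\b^{\l'}_{\s}(\eta')$. Concretely, vanishing on the $\wt Y$-directions removes the $\wt Y$-component of $\eta_{Y}$. Vanishing on the $\frb$-directions, coming from changing the trivialization at $\t\in\Sig$ modulo $t$ together with the flag, makes its $\fl_{n}$-component $\frb$-conormal. This, not properness, is what converts a lax-flag covector into one on $(B\bs G/B)^{\Sig}$.
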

\begin{proof} Recall $\cW^{\l}_{\s}$ from \eqref{cW level}. By \eqref{NEis transport BGB}, $\cN^{\Eis, B}_{\pi,\s}=\cup_{\l\in \ZZ^{n}}\cW^{\l}_{\s}$. Clearly $\cW^{\l}_{\s}\subset\ov\cW^{\l}_{\s}$, hence $\cN^{\Eis, B}_{\pi,\s}\subset \cup_{\l\in \ZZ^{n}}\ov\cW^{\l}_{\s}$. To show the other inclusion, it suffices to show for each $\l\in \ZZ^{n}$ that
\begin{equation}\label{ov W level estimate}
\ov\cW^{\l}_{\s}\subset \bigcup_{\l'\ge \l}\cW^{\l'}_{\s}.
\end{equation}
For a point $(s,\cE, \{\cF(\t)\}_{\t\in \Sig}, \xi)\in \ov\cW^{\l}_{\s}(k)$ (where $\cF(\t)$ is a full flag of $\cE|_{\t(s)}$ for each $\t\in \Sig$), there exists a chain of subsheaves $\cE_{\bu}=(\cE_{i})_{1\le i\le n}$ of $\cE$ (with $\rk \cE_{i}=i$, $\cE_{n}=\cE$) such that the image of $\xi$ in $T^{*}_{(s,\cE_{\bu}, \{\cF(\t)\})}\ov\cB^{\l}_{\s}$ lies in the image of pullback from $T^{*}(G\bs (\fl^{\lax}_{n}\times \fl_{n}))^{\Sig}$ via $d\ov\b^{\l}_{\s}$. We will use the notations from the proof of Lemma \ref{l:ov W}: the saturation $\cE'_{i}$ of $\cE_{i}$, $\l'=(\l'_{1},\cdots, \l'_n)$, etc. We will construct a diagram
\begin{equation}\label{diag Z level}
\xymatrix{ & (G\bs (\fl^{\lax}_{n}\times \fl_{n}))^{\Sig} & \ar[l]_-{\ov\b^{\l}_{\s}} \ov\cB^{\l}_{\s} \ar[dr]^{\ov\a^{\l}_{\s}} \\
Y^{B}_{\s}\ar[ur]^{\y}\ar[dr]^{\th} & \ar[l]_-{b} Z^{B}_{\s}\ar[rr]^-{\z} \ar[ur]^{\g}\ar[dr]^{\d} & & \Bun_{G,B}(\pi,\s)\\
 & (B\bs \fl_{n})^{\Sig} &\ar[l]^-{\b^{\l'}_{\s}} \cB^{\l'}_{\s}\ar[ur]^{\a^{\l'}_{\s}} }
\end{equation}
such that 
\begin{itemize}
\item The map $(b,\d): Z^{B}_{\s}\to Y^{B}_{\s}\times_{(B\bs \fl_{n})^{\Sig}}\cB^{\l'}_{\s}$ is smooth.
\item There is $z\in Z^{B}_{\s}(k)$ with image $(s,\cE_{\bu}, \{\cF(\t)\}_{\t\in \Sig})\in \ov\cB^{\l}_{\s}(k)$ and image $(s,\cE'_{\bu}, \{\cF(\t)\}_{\t\in \Sig})\in \cB^{\l'}_{\s}(k)$.
\end{itemize}
If we have such a diagram, let $y\in Y^{B}_{\s}(k)$ be the image of $z$, then by assumption, $d\ov\a^{\l}_{\s}(\xi)\in \Im(d\ov\b^{\l}_{\s}|_{\y(y)})$. This implies $d\z(\xi)\in \Im(db|_{y})$. On the other hand, $d\z(\xi)=d\d(d\a^{\l'}_{\s}(\xi))$. Let $\xi'=d\a^{\l'}_{\s}(\xi)\in T^{*}_{(s,\cE'_{\bu}, \{\cF(\t)\})}\cB^{\l}_{\s}$. We have $d\d(\xi')\in \Im(db|_{y})$. The smoothness of $(b,\d):Z^{B}_{\s}\to Y^{B}_{\s}\times_{(B\bs \fl_{n})^{\Sig}}\cB^{\l'}_{\s}$ implies that the following is exact
\begin{equation*}
0\to T^{*}_{\th(y)}(B\bs \fl_{n})^{\Sig}\xr{(d\th, d\b^{\l'}_{\s})} T^{*}_{y}Y^{B}_{\s}\op T^{*}_{(s,\cE'_{\bu}, \{\cF(\t)\})}\cB^{\l}_{\s}\xr{db-d\d} T^{*}_{z}Z^{B}_{\s}
\end{equation*}
The fact that $d\d(\xi')\in \Im(db|_{y})$ then implies that $\xi'\in \Im(d\b^{\l'}_{\s})$, i.e., $(s,\cE,\{\cF(\t)\}_{\t\in \Sig}, \xi)\in \cW^{\l'}_{\s}$.

It remains to construct the diagram \eqref{diag Z level} with the desired properties. Again we use notation from the proof of Lemma \ref{l:ov W}, except that we denote the finite set $\Sig$ there (indexing a set of sections of $\pi$) now by $\Sig'$. Enlarging $\Sig'$ if necessary we may assume $\Sig\subset \Sig'$. Recall $m$ is the maximum of the multiplicities of any single point appearing in the divisors $D_{i}$, $1\le i\le n-1$. Consider the scheme $Y_{m+1,\Sig'}=\prod_{\t\in \Sig'}Y_{m+1,\t}$. Note that for each $\t\in \Sig$, a $k$-point $M_{i}(\t)\subset \cO_{R_{m+1}}^{\op i}$ of $Y_{m+1,\t}$ after reduction modulo the maximal ideal of $R_{m+1}$ gives an $i$-dimensional vector space $V_{i}(\t)=M_{i}(\t)\ot_{R_{m+1}}k$ together with maps $V_{1}\to V_{2}\to\cdots  V_{n-1}\to V_{n}=k^{n}$. Here $\dim V_{i}=i$ is guaranteed by the fact that $m+1$ is strictly larger than the length of $\cO_{R_{m+1}}^{\op i}/M_{i}(\t)$. This construction gives a map $Y_{m+1,\t}\to \fl^{\lax}_{n}$. Taking product over $\t\in \Sig$ we get a map $\y': Y_{m+1,\Sig'}\to (\fl^{\lax}_{n})^{\Sig}$. Let $\wt Y_{m+1,\Sig'}$ be a resolution of singularities of $Y_{m+1,\Sig'}$ and let $Y^{B}_{\s}:=\wt Y_{m+1,\Sig'}\times \fl_{n}^{\Sig}$. The map $\y: Y^{B}_{\s}\to (G\bs (\fl^{\lax}_{n}\times \fl_{n}))^{\Sig}$ is the composition
\begin{equation*}
Y^{B}_{\s}\to Y_{m+1,\Sig'}\times \fl_{n}^{\Sig}\xr{\y'\times\id}(\fl^{\lax}_{n})^{\Sig}\times \fl_{n}^{\Sig}\to (G\bs (\fl^{\lax}_{n}\times \fl_{n}))^{\Sig}.
\end{equation*}
The map $\th: Y^{B}_{\s}\to (B\bs \fl_{n})^{\Sig}$ is the composition of the second projection and the further projection $\fl_{n}^{\Sig}\to (B\bs \fl_{n})^{\Sig}$. 

Let $Z^{B}_{\s}=\Bun^{\l'}_{B,m+1}(\pi,\Sig')\times Y^{B}_{\s}$, with the projection to $Y^{B}_{\s}$ denoted by $b$. Let $\d$ be the composition
\begin{equation*}
Z^{B}_{\s}=\Bun^{\l'}_{B,m+1}(\pi,\Sig')\times Y^{B}_{\s}\to \Bun^{\l'}_{B,1}(\pi,\s)\times \fl_{n}^{\Sig}\to \Bun_{B,1}^{\l'}(\pi,\s)\twtimes{B^{\Sig}}\fl_{n}^{\Sig}=\cB^{\l'}_{\s}.
\end{equation*}
Recall the map $\a: \Bun^{\l'}_{B,m+1}(\pi,\Sig')\times Y_{m+1,\Sig'}\to \ov\Bun^{\l}_{B}(\pi)$ constructed in the proof of Lemma \ref{l:ov W} that sends $(s',\cF_{\bu}, \{\io(\t)\}_{\t\in \Sig'}, \{M_{\bu}(\t)\}_{\t\in \Sig'})$ to the point $(s', \cG_{\bu})\in \ov\Bun^{\l}_{B}(\pi)$ obtained by modifying $\cF_{\bu}$ along $\Sig'(s')$. Note that $\cG_{n}=\cF_{n}$ is equipped with a trivialization along $\Sig'(s')$ using $\{\io(\t)\}_{\t\in \Sig'}$, and in particular along $\Sig(s')$. With the extra factor of $\fl_{n}^{\Sig}$ in $Z^{B}_{\s}$, we can assign a full flag for $\cG_{n}|_{\t(s')}$ for each $\t\in \Sig$. This way we get a map $\g: Z^{B}_{\s}\to \ov\cB^{\l}_{\s}$. We have constructed all spaces and maps in the diagram \eqref{diag Z level}. The commutativity and the smoothness of $(b,\d)$ are easily checked. The point $z\in Z^{B}_{\s}(k)$ is constructed similarly as in the proof of  Lemma \ref{l:ov W}, with the full flags $\{\cF(\t)\}_{\t\in \Sig}$ (one for each fiber $\cE|_{\t(s)}$, $\t\in \Sig$) in the original point $(s,\cE, \{\cF(\t)\}_{\t\in \Sig})\in\Bun_{G,B}(\pi,\s)(k)$ as the extra data needed to lift a point from $Z$ to $Z^{B}_{\s}$. 
\end{proof}

\begin{remark} It is possible to define a version of the universal Eisenstein cone for the moduli stack of $G$-bundles with arbitrarily deep level structure $\bK$ along $\s$, in the same style as we did for the Iwahori level. The idea is to treat principal congruent level  $\Bun_{G,n}(\pi,\s)$ first (in which case the cotangent bundle $T^{*}\Bun_{G,n}(\pi,\s)$ descends to $\Bun_{G}(\pi)$, just as in the case $n=1$ considered above), and then transport the Eisenstein cone there using $\orr{\frp_{n}}$, where $\frp_{n}:\Bun_{G,n}(\pi,\s)\to \Bun_{G,\bK}(\pi, \s)$ is defined for large enough $n$. 
\end{remark}

\section{Nilpotent sheaves under Hecke operators}

The goal of this section is to show that the category of sheaves with singular support in the universal Eisenstein cone are stable under Hecke functors. This is a generalization of our earlier result \cite[Theorem 5.2.1]{NY}.

In this section we will work over the base field $k=\CC$. Fix a field $E$ of characteristic zero as the field of coefficients for sheaves. For a stack $\frM$ over $\CC$, we denote by $\Sh(\frM)$ the cocomplete category of complexes of sheaves on $\frM$ with $E$-coefficients under the classical topology.

Consider a family of curves $\pi:\frX\to S$ as in \S\ref{ss:rel curve}, with $S$ smooth.

\subsection{Nilpotent sheaves}\label{ss:nilp sh} We will write
\begin{equation*}
\cN_{\pi}:=\cN^{\Eis}_{\pi}.
\end{equation*}
Consider 
\begin{equation*}
\Sh_{\cN^{\Eis}_{\pi}}(\Bun_{G}(\pi))\subset \Sh(\Bun_{G}(\pi)),
\end{equation*}
the full subcategory consisting of sheaves with singular support contained in $\cN_{\pi}$.

For any base change $s: S'\to S$, where both $S'$ and $S$ are smooth over $k$, recall that we have an induced map $\th: \Bun_{G}(\pi')\to \Bun_{G}(\pi)$.

\begin{lemma}\label{l:nilp pullback}
In the above situation,  $\th^{*}$ sends $\Sh_{\cN_{\pi}}(\Bun_{G}(\pi))$ to $\Sh_{\cN_{\pi'}}(\Bun_{G}(\pi'))$.
\end{lemma}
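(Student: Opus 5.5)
The plan is to combine the standard singular support estimate for pullbacks with the base change compatibility of Lemma \ref{l:Eis cone bc}. Recall the general estimate: for a morphism $f:\frM_1\to\frM_2$ of smooth stacks and $\cF\in\Sh(\frM_2)$, if $f$ is non-characteristic for $\ssupp(\cF)$, meaning $(f^{\na})^{-1}(\ssupp(\cF))\cap \ker(df)$ lies in the zero section, then
\begin{equation*}
\ssupp(f^{*}\cF)\subset \oll{f}(\ssupp(\cF)).
\end{equation*}
This holds for schemes by Kashiwara--Schapira and extends to stacks by passing to smooth atlases. I will apply it with $f=\th:\Bun_{G}(\pi')\to\Bun_{G}(\pi)$. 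Given $\cF\in\Sh_{\cN_{\pi}}(\Bun_{G}(\pi))$, we have $\ssupp(\cF)\subset\cN_{\pi}$, and Lemma \ref{l:Eis cone bc} gives $\oll{\th}(\cN_{\pi})=\cN^{\Eis}_{\pi'}=\cN_{\pi'}$. So once non-characteristicity is checked, the conclusion $\ssupp(\th^{*}\cF)\subset\cN_{\pi'}$ follows directly.

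The key step is therefore to show that $\th$ is non-characteristic for $\cN_{\pi}$. Since $\th$ is the base change of $t:S'\to S$ along the smooth map $\Pi$, a covector in $T^{*}_{(s,\cE)}\Bun_{G}(\pi)$ is killed by $d\th$ exactly when it comes from $\ker(dt)\subset T^{*}_{s}S$ via $d\Pi$. This identification uses the same cotangent-complex triangle as in Lemma \ref{l:Cart Lag}: the map $(d\th,d\Pi)$ identifies $T^{*}_{s}S$ with a fiber product, so $\ker(d\th)=d\Pi(\ker dt)$. In particular, $\ker(d\th)\subset d\Pi(T^{*}S)$. Corollary \ref{c:univ cone non-char} says that $\cN_{\pi}\cap d\Pi(T^{*}S)$ lies in the zero section, so $\cN_{\pi}\cap\ker(d\th)$ lies in the zero section, which is exactly the non-characteristic condition.

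The main obstacle I expect is making the pullback estimate precise in the present setting. Here $\Bun_{G}(\pi)$ is a stack that is only locally of finite type, and sheaves are taken in the classical topology with unbounded, cocomplete coefficients. I would handle this by reducing to finite type open substacks $\cU$, which is harmless because singular support is local. I would then choose a smooth atlas $M\to\cU$ and base change it to $M'=S'\times_{S}M$, where the statement becomes the scheme-level theorem. Compatibility of $\oll{(-)}$ with these smooth covers is provided by Lemma \ref{l:Cart Lag}. For unbounded or infinite-rank complexes, I would use that singular support is detected on the constructible or compact pieces, or equivalently cite the version of the estimate in \cite{NY}. Finally, only a set-theoretic inclusion is needed, so the universal homeomorphism statement of Lemma \ref{l:Eis cone bc} is more than sufficient.
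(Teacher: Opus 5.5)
Your proposal is correct and follows the paper's proof. The paper likewise shows that $\th$ is non-characteristic for $\cN_{\pi}$: it observes that $(d\th)^{-1}(0_{\Bun_{G}(\pi')})$ lies in the pullback of $d\Pi(T^{*}S)$ and then invokes Corollary \ref{c:univ cone non-char}. It then applies \cite[Proposition 5.4.13]{KS} together with $\oll{\th}(\cN_{\pi})=\cN_{\pi'}$ from Lemma \ref{l:Eis cone bc}. Your extra remarks on finite-type opens, smooth atlases and unbounded complexes only spell out points the paper leaves implicit.
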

\begin{proof}
We claim that $\th$ is non-characteristic with respect to $\cN_{\pi}$ in the sense of \cite[Definition 5.4.12]{KS}. Indeed, in the Lagrangian correspondence
\begin{equation*}
\xymatrix{T^{*}\Bun_{G}(\pi') & S'\times_{S}T^{*}\Bun_{G}(\pi) \ar[l]_-{d\th}\ar[r]^-{\th^{\na}} & T^{*}\Bun_{G}(\pi)}
\end{equation*}
We have $(d\th)^{-1}(0_{\Bun_{G}(\pi'})$ is contained in the pullback of $S'\times_{S}d\Pi(T^{*}S)$. By Corollary \ref{c:univ cone non-char}, $\cN_{\pi}\cap d\Pi(T^{*}S)$ is the zero section, hence $(\th^{\na})^{-1}(\cN_{\pi})=S'\times_{S}\cN_{\pi}$ also intersects $(d\th)^{-1}(0_{\Bun_{G}(\pi')})$ in the zero section. Therefore $\th$ is non-characteristic with respect to $\cN_{\pi}$, i.e., $(\th^{\na})^{-1}(\cN_{\pi})\cap(d\th)^{-1}(0_{\Bun_{G}(\pi')})$ is contained in the zero section.

By \cite[Proposition 5.4.13]{KS}, for $\cF\in \Sh_{\cN_{\pi}}(\Bun_{G}(\pi))$, since $\th$ is non-characteristic with respect to $\cN_{\pi}$, $\ssupp(\th^{*}\cF)$ is contained in $\oll{\th}(\ssupp(\cF))\subset \oll{\th}(\cN_{\pi})$, which is $\cN_{\pi'}$ by Lemma \ref{l:Eis cone bc}.
\end{proof}

\subsection{Local constancy of automorphic category}
As mentioned in the Introduction, Betti geometric Langlands predicts that $\Sh_{\cN_{s}}(\Bun_{G}(\frX_{s}))$ moves locally constantly as $s$ moves in $S$. A natural approach to establish local constancy is to construct a ``connection'' on the sheaf of categories $\Sh_{\cN_{s}}(\Bun_{G}(\frX_{s}))$ by moving $s\in S$.  Imposing a global singular support condition for sheaves on $\Sh_{\cN_{s}}(\Bun_{G}(\frX_{s}))$ serves the purpose of such a connection.
 
\begin{remark}\label{r:analytic setup} To formulate the local constancy conjecture, we will need to consider more general bases $S$ then algebraic stacks. We still start with the an algebraic setup as in \S\ref{ss:rel curve}, and assume $S$ is a smooth DM stack over $\CC$. Now $U$ be a complex manifold and $U\to S^{an}$ be a complex analytic map that is also \'etale (locally biholomorphic). Denote $\pi_{U}: U\times_{S}\frX\to U$ the base change of $\pi$, an analytic family of curves. It makes sense to form the base change $\Bun_{G}(\pi_{U})=U\times_{S}\Bun_{G}(\pi)$ as a complex analytic stack. We turn the base change property of the universal Eisenstein cone proved in Lemma \ref{l:Eis cone bc} into a definition in the complex analytic setting: define $\cN_{\pi_{U}}\subset T^{*}\Bun_{G}(\pi_{U})$ to be the image of $U\times_{S}\cN_{\pi}$ under the map $d\th: U\times_{S}T^{*}\Bun_{G}(\pi)\to T^{*}\Bun_{G}(\pi_{U})$.  

It is possible to make base change to a general complex manifold $S'$, or even a real manifold, using the formalism of relative real analytic spaces developed in \cite[Appendix A.2-A.3]{NY-glue}, but we do not need such generality here.
\end{remark} 

Let $U\to S^{an}$ be an analytic \'etale map from a complex manifold $U$ and let $u\in U$. Let $i_{u}: \Bun_{G}(\frX_{u})\incl \Bun_{G}(\pi_{U})$ be the inclusion of a fiber. Then Lemma \ref{l:nilp pullback} gives a functor
\begin{equation}\label{res Ms}
i_{u}^{*}: \Sh_{\cN_{\pi_{U}}}(\Bun_{G}(\pi_{U}))\to \Sh_{\cN_{u}}(\Bun_{G}(\frX_{u}))
\end{equation}
where on the right side $\cN_{u}$ denotes the usual global nilpotent cone inside $T^{*}\Bun_{G}(\frX_{u})$. 

\begin{conj}\label{c:res eq} In the above setting, suppose $U$ is  contractible, then the restriction functor \eqref{res Ms} is an equivalence.
\end{conj}

\begin{remark} Recently Faergeman and Kjaersgaard obtained a proof of Conjecture \ref{c:res eq}. As far as we understand, the proof is a direct topological argument in the case of rank one groups, but for general $G$ it uses deep ingredients from the proof of the geometric Langlands conjecture. A direct topological argument in general still seems to be missing.
\end{remark}

\sss{Mapping class group action} Let $s\in S(\CC)$ and let $S'\to S^{an}$ be the universal cover of $S^{an}$ based at $s$: it is equipped with a base point $s'\in S'$ lifting $s$. Then $\pi_{1}(S^{an},s)$ acts on $S'$ by deck transformations. It induces an action of $\pi_{1}(S^{an},s)$ on $\Bun_{G}(\pi_{S'})\cong S'\times_{S}\Bun_{G}(\pi)$ preserving the universal Eisenstein cone $\cN_{\pi_{S'}}$. The category $\Sh_{\cN_{\pi_{S'}}}(\Bun_{G}(\pi_{S'}))$ therefore carries an action of $\pi_{1}(S^{an},s)$. 

Assume that Conjecture \ref{c:res eq} holds. Assume also that $S'$ is contractible, i.e., $S^{an}$ is a $K(\pi,1)$ space. Conjecture \ref{c:res eq} then gives an equivalence $\Sh_{\cN_{\pi_{S'}}}(\Bun_{G}(\pi_{S'}))\isom \Sh_{\cN_{s'}}(\Bun_{G}(\frX_{s'}))$, the latter is the same as $\Sh_{\cN_{s}}(\Bun_{G}(\frX_{s}))$. We conclude that there is a canonical action of $\pi_{1}(S^{an},s)$ on $\Sh_{\cN_{s}}(\Bun_{G}(\frX_{s}))$ in this case.

Now starting with a connected smooth projective curve $X$ over $\CC$ of genus $g\ge2$. Consider $X$ as the fiber of the universal family of genus $g$ curves $\pi_{g}: \frX_{g}\to \frM_{g}$ over $\xi\in \frM_{g}(\CC)$. Applying the above reasoning, and using the fact that the Teichm\"uller space (the universal cover of $\frM_{g}^{an}$ as an orbifold) is contractible, we obtain a canonical action of the mapping class group $\pi_{1}(\frM^{an}_{g}, \xi)$ on $\Sh_{\cN_{X}}(\Bun_{G}(X))$.

\subsection{Spherical Hecke functor}\label{ss:Hk} 
Now suppose $S'$ is another stack smooth over $k$ with a map $t: S'\to S$ that lifts to $\frX$ 
\begin{equation*}
\xymatrix{ & \frX\ar[d]^{\pi}\\
S'\ar[r]^{t}\ar[ur]^{\xi} & S}
\end{equation*}
We use the notation $\pi':S'\times_{S}\frX\to S'$ etc. from \S\ref{sss:bc}. 

Let $G\lr{z}$ be the loop group of $G$ and $G\tl{z}$ be the arc group. Form the local Hecke stack $G\tl{z}\bs G\lr{z}/G\tl{z}$, with the action of the pro-algebraic group $\Aut(k\tl{z})$ of automorphisms of the formal disk $\Spf k\tl{z}$. For a $k$-algebra $R$, an $R$-point of the quotient stack $\frac{G\tl{z}\bs G\lr{z}/G\tl{z}}{\Aut(k\tl{z})}$ classifies $(\wh D, \cE_{1},\cE_{2}, \t)$ where $\wh D$ is a formal disk over $R$ (i.e., \'etale locally on $\Spec R$ isomorphic to $\Spf R\tl{z}$, and to $\wh D$ we can canonically assign an $R$-scheme $D$ locally isomorphic to $\Spec R\tl{z}$), $\cE_{1}$ and $\cE_{2}$ two $G$-bundles over $D$ and $\t$ is an isomorphism between the pullbacks $\cE_{1}|_{D^{\times}}$ and $\cE_{2}|_{D^{\times}}$ (where $D^{\times}=D\bs \Spec R$).

Consider the Hecke correspondence 
\begin{equation*}
\xymatrix{  & \Hk^{\Sph}_{\xi} \ar[rr]^-{\inv^{\Sph}_{\xi}}\ar[dr]^{p_{2}}\ar[dl]_{p_{1}}  && \frac{G\tl{z}\bs G\lr{z}/G\tl{z}}{\Aut(k\tl{z})}\\
\Bun_{G}(\pi) &  & \Bun_{G}(\pi')  
}
\end{equation*}
whose $S''$-points, for any $k$-scheme $S''$, classifies $(s', \cE_{1}, \cE_{2}, \t)$ where $s': S''\to S'$, $\cE_{1},\cE_{2}$ are $G$-bundles over $\frX_{S''}:=S''\times_{S}\frX$ (using the map $t\c s': S''\to S$) and  $\t$ is an isomorphism between  $\cE_{1}$ and $\cE_{2}$ over $\frX_{S''}\bs \G(\xi\c s')$, the complement of the graph of $\xi\c s': S''\to \frX$. The map $p_{1}$ records $(t\c s',\cE_{1})$ and $p_{2}$ records $(s', \cE_{2})$. The map $\inv^{\Sph}_{\xi}$ records the formal completion of $\frX_{S''}$ along the graph $\G(\xi\c s')$ together with the restrictions of $(\cE_{1},\cE_{2}, \t)$ to that formal disk over $S''$.

Consider the spherical Hecke category 
\begin{equation*}
\cH^{\Sph}=\Sh\left(\frac{G\tl{z}\bs G\lr{z}/G\tl{z}}{\Aut(\CC\tl{z})}\right)
\end{equation*}
This can be identified with the equivariant category of sheaves on the affine Grassmannian $\Gr_{G}=G\lr{z}/G\tl{z}$ for the action of $G\tl{z}\rtimes\Aut(\CC\tl{z})$ by left translation and change of the local coordinate $z$. For $\cK\in \cH^{\Sph}$, we have the Hecke functor
\begin{equation*}
H^{\Sph}_{\xi,\cK}=p_{2!}(p^{*}_{1}(-)\ot \inv^{\Sph,*}_{\xi}\cK): \Sh(\Bun_{G}(\pi))\to \Sh(\Bun_{G}(\pi'))
\end{equation*}

\begin{theorem}\label{th:sph Hk pres} For any $\cK\in \cH^{\Sph}$, the Hecke functor $H^{\Sph}_{\xi,\cK}$ sends $\Sh_{\cN_{\pi}}(\Bun_{G}(\pi))$ to $\Sh_{\cN_{\pi'}}(\Bun_{G}(\pi'))$. 
\end{theorem}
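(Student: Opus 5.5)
The plan is to deduce the statement from an Iwahori analog, as the introduction indicates. First base change along $t$. By Lemma \ref{l:nilp pullback}, $\th^{*}$ sends $\Sh_{\cN_{\pi}}(\Bun_{G}(\pi))$ into $\Sh_{\cN_{\pi'}}(\Bun_{G}(\pi'))$. The Hecke correspondence over $S'$ factors through $\Bun_{G}(\pi')$ on both sides. So $H^{\Sph}_{\xi,\cK}=H^{\Sph}_{\xi',\cK}\c\th^{*}$, where $\xi'=(\id,\xi): S'\to\frX'$ is a section of $\pi'$. We may therefore assume $S'=S$ and that $\xi$ is a section $\s$ of $\pi$.

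Next, replace the spherical kernel by an Iwahori one. Let $r:\Bun_{G,B}(\pi,\s)\to\Bun_{G}(\pi)$ be the flag-variety fibration \eqref{forg level}. Form the Iwahori Hecke correspondence $\Hk^{I}_{\s}$ between $\Bun_{G,B}(\pi,\s)$ and itself, with its map to $\frac{I\bs G\lr{z}/I}{\Aut(k\tl{z})}$. Every spherical kernel pulled back to the Iwahori double quotient gives a kernel $\cK^{I}$ with
\[
H^{\Sph}_{\s,\cK}\c r_{*}\simeq r_{*}\c H^{I}_{\s,\cK^{I}}\c r^{*}
\]
up to shift, and $\cF$ is a summand of $r_{*}r^{*}\cF$ up to shifts. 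Since $r$ is smooth and proper, Remark \ref{rem: cone B vs N} gives $\oll{r}(\cN_{\pi})\subset\cN^{\Eis,B}_{\pi,\s}$ and $\orr{r}(\cN^{\Eis,B}_{\pi,\s})=\cN_{\pi}$. Hence $r^{*}$ and $r_{*}$ preserve the nilpotent categories, by the standard singular support estimates for smooth pullback and proper pushforward. It therefore suffices to prove that Iwahori Hecke functors with kernels constructible along Iwahori orbits preserve $\Sh_{\cN^{\Eis,B}_{\pi,\s}}(\Bun_{G,B}(\pi,\s))$.

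For the Iwahori statement, use that the affine Hecke category is generated under convolution by the kernels supported on the closures of length-$\le1$ orbits, namely simple affine reflections and length zero elements. Length zero elements act by automorphisms of $\Bun_{G,B}(\pi,\s)$, which are Hecke modifications at $\s$ by translation elements. These automorphisms are defined uniformly over $S$, and fiberwise they preserve the nilpotent cone. By the uniqueness in Theorem \ref{th:univ cone Iw}(3), they therefore preserve its lifting $\cN^{\Eis,B}_{\pi,\s}$. For a simple reflection $s_{i}$, including the affine one, the closed Hecke correspondence is $\Bun_{G,P_{i}}(\pi,\s)$. Here $P_{i}$ is the corresponding parahoric, and $\Bun_{G,B}(\pi,\s)\to\Bun_{G,P_{i}}(\pi,\s)$ is a smooth proper $\PP^{1}$-fibration $f_{i}$. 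The functor is then $f_{i}^{*}f_{i*}$ up to shift. Any $\cK$ in the generated category is obtained by finite colimits, retracts and shifts, so it suffices to treat these.

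The main obstacle is the microlocal claim $\oll{f_{i}}\orr{f_{i}}(\cN^{\Eis,B}_{\pi,\s})\subset\cN^{\Eis,B}_{\pi,\s}$. The left side is a closed conic isotropic set, by Lemma \ref{l:pres iso} and properness of $f_{i}$. Since $f_{i}$ is a morphism over $S$ and both cones are non-characteristic for the maps to $S$, the left side is also generically non-characteristic. Its image in the relative cotangent bundle lies in $\cN^{\rel,B}_{\pi,\s}$, because fiberwise this is the known single-curve parahoric statement: nilpotent Higgs fields with residue in $\frn_{P_{i}}$ are the image of nilpotent ones. Then Proposition \ref{p:contained in lifting} together with Theorem \ref{th:univ cone Iw}(3) gives the containment. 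The delicate point is genericity of the non-characteristic property. If it fails, I would instead define a parahoric Eisenstein cone on $\Bun_{G,P_{i}}(\pi,\s)$ via $\un\cN^{\Eis}_{\pi,\s}$ and check the containment directly from Definition \ref{def:univ cone Eis}, using the single-point description of $\cN^{\Eis,B}$.
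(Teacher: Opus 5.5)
Your argument follows the paper's proof in all essentials. The shared steps are: base change to a section via Lemma \ref{l:nilp pullback}; passage from spherical to Iwahori level through the flag fibration and Remark \ref{rem: cone B vs N}; reduction to generators; and, for a simple reflection, the combination of non-characteristicity, fiberwise preservation of nilpotence, and Proposition \ref{p:contained in lifting}. By Lemma \ref{l:Cart Lag}, your $\oll{f_i}\orr{f_i}$ equals $\orr{r_2}\oll{r_1}$ for $\Hk_{\le s}=\Bun_{G,B}\times_{\Bun_{G,\bP_s}}\Bun_{G,B}$, so your key containment is exactly the paper's \eqref{Hk s NB}. Working at $B$-level with constant kernels on $\bI\bs\bP_s/\bI$ is a legitimate economy. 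The maps are then already proper, so the paper's quotient by $H_{>0}$ and its extra term $\orr{q_{2,1}}\oll{q_{1,1}}$ disappear, though you do not obtain the universally monodromic Theorem \ref{th:aff Hk pres}. Handling length-zero elements by uniqueness of the lifting is also fine.

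One step needs repair. Proposition \ref{p:contained in lifting} applies only to conic Lagrangians. So it does not prove $\oll{f_i}\orr{f_i}(\cN^{\Eis,B}_{\pi,\s})\subset\cN^{\Eis,B}_{\pi,\s}$ for this whole isotropic set, only for its top-dimensional part. That is all you need. By involutivity, $\ssupp(f_i^*f_{i*}\cF)$ is closed, conic and coisotropic, and it lies in the isotropic set $\oll{f_i}\orr{f_i}(\cN^{\Eis,B}_{\pi,\s})$, hence it is Lagrangian. It is non-characteristic, and its image in $T^*(\Pi^B_\s)$ lies in $\cN^{\rel,B}_{\pi,\s}$. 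So the proposition applies to it directly; this is the paper's $(\cdot)_{top}$ device. The non-characteristic property is not delicate either: since $f_i$ is smooth, surjective and defined over $S$, both $\orr{f_i}$ and $\oll{f_i}$ preserve it exactly.

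There are two small slips. First, your intertwining formula has a stray $r^*$; the identity you actually use is
\[
H^{I}_{\s,\cK^{I}}\c r^{*}\simeq (r^{*}\c H^{\Sph}_{\s,\cK})\ot\cohog{*}{G/B},
\]
which holds up to this tensor factor, not just ``up to shift''. Second, since all kernels live in cocomplete categories, you need arbitrary colimits rather than finite ones, together with the fact that $\Sh_{\cN}$ is closed under colimits.
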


\begin{remark} We explain how Theorem \ref{th:sph Hk pres} recovers \cite[Theorem 5.2.1]{NY}. The latter says that for a single curve $X$ and Hecke modification $H_{\cK}: \Sh(\Bun_{G}(X))\to \Sh(X\times \Bun_{G}(X))$ for a moving point in $X$, then  $H^{\Sph}_{\cK}$ sends $\Sh_{\cN_{X}}(\Bun_{G}(X))$ to $\Sh_{0_{X}\times \cN_{X}}(X\times \Bun_{G}(X))$ (i.e., singular support of the result is contained in the product of the zero section of $T^{*}X$ and the global nilpotent cone in $T^{*}\Bun_{G}(X)$).

To recover this statement, we only need to apply Theorem \ref{th:sph Hk pres} to the constant curve $\frX=X$ over $S=\pt$ and $S'=X, \xi=\id_{X}:S'=X\to X$. Note that $\Bun_{G}(\pi')=X\times \Bun_{G}(X)$ and its universal Eisenstein cone is $0_{X}\times \cN_{X}$ by the base change property in Lemma \ref{l:Eis cone bc}.
\end{remark}

\begin{remark} If we take $\cK$ to be the monoidal unit in $\cH^{\Sph}$, $H_{\xi,\cK}^{\Sph}$ is the same as $\th^{*}$ considered in Lemma \ref{l:nilp pullback}, of which this theorem is a generalization. 
\end{remark}

We prove this theorem by first reducing to the Iwahori version (in \S\ref{sss:red to Iw}) and then prove the Iwahori version in \S\ref{sss:proof Iw}.

\subsection{Affine Hecke category with universal monodromy}\label{sss:aff Hk}
We shall state a version of Theorem \ref{th:sph Hk pres} with $B$ or $N$-level structures. Let $\bI$ (resp. $\bI^{\circ}$) be the preimage of $B\subset G$ (resp. $N\subset G$) under the reduction mod $z$ map $G\tl{z}\to G$. The (universally monodromic) affine Hecke category is the monoidal category
\begin{equation*}
\cH=\Sh_{\bimon}(\bI^{\circ}\bs G\lr{z}/\bI^{\circ})
\end{equation*}
under convolution. Here the subscript $\bimon$ means we are taking only sheaves that are monodromic under both the left and the right translation under $H=B/N$ on $\bI^{\circ}\bs G\lr{z}/\bI^{\circ}$. The monoidal structure is defined by the usual convolution diagram
\begin{equation*}
\xymatrix{ & \bI^{\circ}\bs G\lr{z}\times^{\bI^{\circ}} G\lr{z}/\bI^{\circ} \ar[dl]_{p_{1}}\ar[dr]^{p_{2}}\ar[rr]^-{m}&& \bI^{\circ}\bs G\lr{z}/\bI^{\circ}\\
\bI^{\circ}\bs G\lr{z}/\bI^{\circ} && \bI^{\circ}\bs G\lr{z}/\bI^{\circ}}
\end{equation*}
with the formula
\begin{equation*}
\cK_{1}\star\cK_{2}:=m_{!}(p_{1}^{*}\cK_{1}\otimes p_{2}^{*}\cK_{2})[2\dim H].
\end{equation*}
Let $\cL_{\univ}$ be the universal local system on $H$ whose stalks are canonically isomorphic to the group ring of $E[\pi_1(H)]$ as the regular representation of $\pi_1(H)$. Let $\d: H\simeq \bI/\bI^{\c}\incl G\lr{z}/\bI^{\circ}$ be the natural map. Then $\d_{*}\cL_{\univ}\in \cH$ is an identity object for the monoidal structure on $\cH$. 

The monoidal category $\Sh_{0}(H)$ of local systems on $H$ can be identified with a full monoidal subcategory of $\cH$ of objects supported on $\bI$. Left and right convolution with $\Sh_{0}(H)$ gives $\cH$ the structure of a $\Sh_{0}(H)$-bimodule.

Let $\tilW=\xcoch(T)\rtimes W$ be the extended affine Weyl group. For each $w\in \tilW$, fix a lifting $\dot w\in N_{G\lr{z}}(T)$. Let $i_{w}: \bI^{\c}\bs \bI \dot{w} \bI/\bI^{\c}\incl \bI^{\c}\bs G\lr{z}/\bI^{\circ}$ be the inclusion. The map $h_{\dot w}: H\to\bI^{\c}\bs \bI \dot{w} \bI/\bI^{\c}$ mapping $x\mapsto x\dot w$ is a gerbe for a pro-unipotent group. Therefore the universal local system $\cL_{\univ}$ descends to a local system $\cL_{\univ}(w)$ on $\bI^{\c}\bs \bI \dot{w} \bI/\bI^{\c}$. Let $\wh\D(w):=i_{w!}\cL_{\univ}( w)[\ell(w)]$. We have $\d\cong \wh\D(1)$. The objects $\{\wh\D(w)\}_{w\in W}$ and their shifts generate $\cH$ under colimits.

\subsection{Iwahori version}\label{sss:aff Hk action}
In the setup of \S\ref{ss:rel curve}, assume $\s\subset \frX$ contains the image of a section $\xi:S\to \frX$ to $\pi$. Write $\s=\xi(S)\coprod \s'$. We simply write
\begin{equation*}
\Sh_{\cN}(\Bun_{G,N}(\pi,\s)):=\Sh_{\cN^{N}_{\pi,\s}}(\Bun_{G,N}(\pi,\s)).
\end{equation*}
Suppose we are given a trivialization of the formal neighborhood of $\frX$ along the image of $\xi$:
\begin{equation*}
\frX^{\wedge}_{/\xi(S)}\isom \Spf(k\tl{z})\wh\times S. 
\end{equation*}

Let $\Hk_{\xi}$ be the analog of $\Hk_{\xi}^\sph$ with $N$-level structures along $\s$. More precisely, for any scheme $S'$ over $\CC$, $\Hk_{\xi}(S')$ is the groupoid of tuples $(s,\cE_1,\cE_{1,\s,N},\cE_2,\cE_{2,\s,N},\t)$ where $s:S'\to S$, $(s,\cE_i, \cE_{i,\s,N})\in \Bun_{G,N}(\pi,\s)(S')$ for $i=1,2$, and $\t$ is an isomorphism between $(\cE_1,\cE_{1,\s',N})$ and $(\cE_2,\cE_{2,\s',N})$ over $\frX_s-\xi_s=(\frX-\xi)\times_{S}S'$ (in particular, $\t$ preserves the $N$-reductions along $\s'\times_{S}S'$). Consider the diagram
\begin{equation*}
\xymatrix{  & \Hk_{\xi} \ar[rr]^-{\inv_{\xi}}\ar[dr]^{q_{2}}\ar[dl]_{q_{1}}  && \bI^{\c}\bs G\lr{z}/\bI^{\c}\\
\Bun_{G,N}(\pi,\s) &  & \Bun_{G,N}(\pi,\s)  
}
\end{equation*}
where $q_i$ records $(s,\cE_i, \cE_{i,\s,N})$ and $\inv_\xi$ records the relative position between $(s,\cE_1,\cE_{1,\xi,N})$ and $(s,\cE_2,\cE_{2,\xi,N})$ along $\xi_s$.

For $\cK\in \cH$, its action on $\Sh(\Bun_{G,N}(\pi,\s))$ is given by
\begin{equation*}
H_{\xi,\cK}=q_{2!}(q_{1}^{*}(-)\ot \inv^{*}_{\xi}\cK): \Sh(\Bun_{G,N}(\pi,\s))\to \Sh(\Bun_{G,N}(\pi,\s)).
\end{equation*}

\begin{theorem}\label{th:aff Hk pres} In the above situation, for any $\cK\in \cH$, the functor $H_{\xi,\cK}$ preserves $\Sh_{\cN}(\Bun_{G,N}(\pi,\s))$.
\end{theorem}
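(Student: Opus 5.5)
We first reduce to the generating objects of $\cH$. Since $H_{\xi,\cK}$ commutes with colimits in $\cK$, and $\Sh_{\cN}(\Bun_{G,N}(\pi,\s))$ is closed under colimits and shifts, it suffices to treat $\cK=\wh\D(w)$ for $w\in\tilW$. Writing $w$ as a length-additive product $w=\omega s_{i_1}\cdots s_{i_\ell}$ with $\omega$ of length zero and $s_{i_j}$ simple affine reflections, we have $\wh\D(w)\simeq\wh\D(\omega)\star\wh\D(s_{i_1})\star\cdots\star\wh\D(s_{i_\ell})$. Convolution corresponds to composition of Hecke functors, so it suffices to treat two cases. The first is $\wh\D(\omega)$ with $\ell(\omega)=0$. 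The second is a single simple reflection; for a simple reflection $\wh\D(s)$ is built by a cone from $\wh\D(1)$ (which acts by the identity) and the $!$-pushforward of the universal local system along the closure, i.e. the minimal parahoric $\bP_s$. So it is enough to handle $\cK$ equal to $\cL_{\univ}$ pulled back along $\bI^\c\bs\bP_s/\bI^\c$, together with the length-zero elements.

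For $\ell(\omega)=0$, the Hecke correspondence restricted to $\bI\dot\omega\bI$ is, up to the monodromic $H$-twist, the graph of an automorphism of $\Bun_{G,N}(\pi,\s)$ (or a Hecke modification by a translation composed with the action of $\omega$). This automorphism is defined over $S$ and respects the fiberwise Hitchin maps with level structure. Hence it preserves $\cN^{\rel,N}_{\pi,\s}$ and, by the uniqueness in Theorem \ref{th:univ cone Iw}(3), also preserves its lifting $\cN^{N}_{\pi,\s}$. Tensoring with the monodromic local system $\inv^*_\xi\cL_{\univ}(\omega)$ only adds covectors along the $H$-orbits, which lie in the cone because $\Res$ at $\xi$ projects to $0$ in $\frh$ fiberwise.

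For the parahoric $\bP_s$, the correspondence factors as $\Bun_{G,N}(\pi,\s)\xr{q_1}\Bun_{G,\bP_s}\xleftarrow{q_2}\Bun_{G,N}(\pi,\s)$, where $q_1$ is smooth and $q_2$ is proper with $\PP^1$-fibers up to an $H$-torsor. Singular support estimates then give
\[
\ssupp(H_{\xi,\cK}\cF)\subset\oll{q_2}\orr{q_1}(\cN^N_{\pi,\s}).
\]
Here we use that $q_{1!}$ of a monodromic sheaf is controlled by $\orr{q_1}$ for proper maps, together with the standard estimate for $q_2^*$ along smooth maps. So we must show $\oll{q_2}\orr{q_1}(\cN^N_{\pi,\s})\subset\cN^N_{\pi,\s}$. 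We would first check this on relative cotangent bundles, fiber by fiber over $S$: fiberwise it is the statement that parahoric Hecke modification preserves the nilpotent cone with Iwahori level structure, which follows from the Hitchin description, since characteristic polynomials are unchanged away from $\xi$ and residues remain nilpotent in the parahoric. Then we pass to the absolute statement using Proposition \ref{p:contained in lifting}. The set $\oll{q_2}\orr{q_1}(\cN^N_{\pi,\s})$ is conic and isotropic by Lemma \ref{l:pres iso}, it is generically non-characteristic for $\Pi^N_\s$ because $\cN^N_{\pi,\s}$ is non-characteristic and the correspondence is over $S$, and its image in $T^*(\Pi^N_\s)$ lies in $\cN^{\rel,N}_{\pi,\s}$. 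Each component is thus contained in the lifting, which is $\cN^N_{\pi,\s}$ by Theorem \ref{th:univ cone Iw}. One subtlety is that Proposition \ref{p:contained in lifting} is stated for Lagrangians. We can either apply it to an isotropic set enlarged to a Lagrangian, or note that its proof only uses the shadow argument, which works for isotropic irreducible components of dimension $\le\dim$.

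The main obstacle is the non-characteristic property of the transported set. Components of $\oll{q_2}\orr{q_1}(\cN^N_{\pi,\s})$ could a priori acquire covectors from $T^*S$, so we need non-characteristicness to hold everywhere and not just generically. We would prove this directly: a covector pulled back from $T^*_sS$ is unchanged by the correspondence over $S$, hence it lies in $\cN^N_{\pi,\s}\cap d\Pi(T^*S)$, which is the zero section. An alternative is to use the Eisenstein description (\ref{NEis transport BGB}) and observe that the parahoric correspondence commutes with the $\Bun_B$-transport. This would give an inclusion $\orr{q_2}$-type compatibility directly from Lemma \ref{l:Cart Lag}, and we would try this route first if the lifting argument proves delicate.
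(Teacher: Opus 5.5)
Your outline has the same skeleton as the paper's proof: monoidal generators $\wh\D(s)$ and $\wh\D(\om)$, a bound on the singular support by a transport of $\cN$, non-characteristicity, then Proposition \ref{p:contained in lifting} plus the fiberwise Hitchin description. However, the step where you conclude containment in $\cN^N_{\pi,\s}$ fails as argued. The conclusion of Proposition \ref{p:contained in lifting} is false for lower-dimensional isotropic sets. Its proof needs $T^*_ZM\subset\L'$, i.e.\ that $\L'$ fills out the conormal of its shadow. Non-characteristicity of a proper subset of $T^*_ZM$ does not force $Z\to S$ to be dominant. Concretely, take $M=\mathbb{A}^2_{x,y}\times\mathbb{A}^1_s\to S=\mathbb{A}^1_s$ and let $\L^{\rel}$ be the relative conormal of $\{x=0\}$, whose lifting is $\L=\{(0,y,s;\xi_x,0,0)\}$. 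The set $\L'=\{(0,y,0;t,0,t)\}$ is conic, isotropic (it lies in $T^*_{\{x=s=0\}}M$), non-characteristic everywhere, and satisfies $p(\L')\subset\L^{\rel}$. Yet $\L'\not\subset\L$. By the Proposition itself, $\L'$ also cannot be enlarged to a Lagrangian satisfying the hypotheses. The missing idea is that you never need the whole transported set. Singular supports are coisotropic (Kashiwara--Schapira), so $\ssupp(H_{\xi,\cK}\cF)$, lying in a conic isotropic set, is a Lagrangian. Hence it is contained in the union $(-)_{top}$ of the top-dimensional components, and Proposition \ref{p:contained in lifting} applies verbatim to those. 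This is exactly how the paper proceeds in \eqref{Hk s Lag}--\eqref{Hk om Lag} and \eqref{Hk s NB}. Your everywhere-non-characteristic argument is correct and is what is needed for those components.

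Two further steps need repair. First, $\Bun_{G,N}(\pi,\s)\to\Bun_{G,\bP_s}$ is not proper: its fibers are $\bP_s/\bI^{\c}$, an $H$-torsor over $\PP^1$. So the bound $\ssupp(q_!\cF)\subset\orr{q}(\ssupp\cF)$ is not available as you state it. The paper uses monodromicity to descend everything to $\Bun_{G,NH_{>0}}=\Bun_{G,N}(\pi,\s)/H_{>0}$, where the fibers $\bP_s/\bI^{\c}H_{>0}$ are compact. The same remark applies to the $H$-torsors in your length-zero case. Apart from that, your treatment of $\wh\D(\om)$ via an automorphism over $S$ and uniqueness of liftings is a valid alternative to \eqref{Hk om Lag}. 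Second, no local system on $\bI^{\c}\bs\bP_s/\bI^{\c}$ restricts to $\cL_{\univ}(s)$ on the open stratum. Up to pro-unipotent gerbes this stack is $N_s\bs L_s/N_s$, and $\pi_1(L_s/N_s)=\xcoch(T)/\ZZ\a_s^\vee$ with $\a_s^\vee$ the coroot attached to $s$. The regular representation of $\pi_1(H)$ does not factor through this quotient; for $\mathrm{SL}_2$ and $s$ finite, $L_s/N_s=\mathbb{A}^2\setminus 0$ is simply connected. So your reduction of $\wh\D(s)$ to $\wh\D(1)$ plus a local-system kernel on the closure is not available. Instead, treat an arbitrary $\cK$ supported on $\bI^{\c}\bs\bP_s/\bI^{\c}$. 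Its singular support lies in the zero section union the conormal of the closed stratum. This contributes the extra term $\orr{q_{2,1}}\oll{q_{1,1}}(\cN)$, which must be handled as in \eqref{Hk 1 Lag}.
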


\sss{Theorem \ref{th:aff Hk pres} implies Theorem \ref{th:sph Hk pres}}\label{sss:red to Iw} 

Let $\th: \Bun_{G}(\pi')\to\Bun_{G}(\pi)$ be the map induced from $t:S'\to S$. We may factor $p_{1}$ as the composition
\begin{equation*}
p_{1}: \Hk_{\xi}^{\Sph}\xr{p'_{1}} \Bun_{G}(\pi')\xr{\th} \Bun_{G}(\pi)
\end{equation*}
By Lemma \ref{l:nilp pullback}, for $\cF\in \Sh_{\cN}(\Bun_{G}(\pi))$, $\th^{*}\cF\in \Sh_{\cN}(\Bun_{G}(\pi'))$. If we work with the family $\pi': \frX'\to S'$ together with the section $\xi'=(\xi,\th):S'\to \frX'$, and the associated Hecke functor $H^{\Sph}_{\xi',\cK}$ for this family, we have $H^{\Sph}_{\xi,\cK}(\cF)=H^{\Sph}_{\xi',\cK}(\th^{*}(\cF))$. Therefore we may reduce the general case to the case $S'=S$ and $\xi$ is a section of $\pi$. Zariski locally over $S$ we may trivialize the formal neighborhood of $\xi$ in $\frX$. We are in the same setting as the Iwahori version \S\ref{sss:aff Hk action}, with $\s=\xi(S)$.

Let $r': \Bun_{G,N}(\pi,\s)\to \Bun_{G}(\pi)$ be the projection. For $\cK\in\cH^{\Sph}$, let $\wt\cK\in \cH$ be the pullback via the projection $\bI^{\c}\bs G\lr{z}/\bI^{\c}\to (G\tl{z}\bs G\lr{z}/G\tl{z})/\Aut(k\tl{z})$. Then for $\cF\in \Sh(\Bun_{G}(\pi))$ 
we have
\begin{equation*}
H_{\xi,\wt\cK}(r'^{*}\cF) \simeq r'^{*}H^{\Sph}_{\xi, \cK}(\cF) \otimes \cohoc{*}{G/N}
\end{equation*}
By Theorem \ref{th:aff Hk pres}, $\ssupp(H_{\xi,\wt\cK}(r'^{*}\cF))\subset \cN^{\Eis,N}_{\pi,\s}$, hence $\oll{r'}\ssupp(H^{\Sph}_{\xi, \cK}(\cF))=\ssupp(r'^{*}H^{\Sph}_{\xi, \cK}(\cF))\subset \cN^{\Eis,N}_{\pi,\s}$. Since $r'$ is smooth and surjective, $\ssupp(H^{\Sph}_{\xi, \cK}(\cF))=\orr{r'}\oll{r'}\ssupp(H^{\Sph}_{\xi, \cK}(\cF))\subset \orr{r'}\cN^{\Eis,N}_{\pi,\s}=\cN^{\Eis}_{\pi}$, the last equality being contained in Remark \ref{rem: cone B vs N}.  We conclude that $H^{\Sph}_{\xi, \cK}$ preserves $\Sh_{\cN_\pi}(\Bun_G(\s))$.

%
%

\sss{Proof of Theorem \ref{th:aff Hk pres}}\label{sss:proof Iw} To simplify notation, we assume $\s=\xi(S)$, i.e., $\s'=\vn$. Also we will write $\Bun_{G,N}$ for $\Bun_{G,N}(\pi,\s)$, etc.

We first reduce the proof to a calculation of transport of Lagrangians. Let $S_\aff$ be the set of simple reflection in the affine Weyl group $W_\aff$, and let $\Om\subset \tilW$ be the subgroup of length zero elements. Then the objects $\{\wh\D(s)|s\in S_{\aff}\}$ and $\{\wh\D(\om)|\om \in \Om\}$ generate $\cH$ monoidally. Therefore it suffices to check $H_{\xi, \wh\D(s)}$ and  $H_{\xi,\wh\D(\om)}$ preserve nilpotent sheaves.


The action of $H_{\xi,\wh\D(s)}$ ($s$ is a simple reflection in $\tilW$) can be expressed as follows. Let $\bP_{s}\subset G\lr{z}$ be the standard parahoric corresponding to $s$. Consider the moduli stack $\Bun_{G,\bP_{s}}$ classifying $G$-bundles along the fibers of $\pi$ together with $\bP_{s}$-level structures along $\s=\xi(S)$. Let $\Hk_{\le s}:=\Bun_{G,N}\times_{\Bun_{G,\bP_{s}}}\Bun_{G,N}$ be the preimage of $\bI^{\c}\bs \bP_{s}/\bI^{\c}$ in $\Hk_{\xi}$ under $\inv_{\xi}$. Below we will treat the slightly more general  situation where $\wh\D(s)$ is replaced by any object $\cK\in \cH$ supported on $\bI^{\c}\bs \bP_{s}/\bI^{\c}$.


Note that $H$ acts on $\Bun_{G,N}$ by changing the $N$-reductions along $\s$. Let $H_{>0}$ be the identity component of $H(\RR)$. Let $\Bun_{G,NH_{>0}}=\Bun_{G,N}(\pi,\s)/H_{>0}$ as a real analytic stack. We have a universal Eisenstein cone $\cN^{\Eis, NH_{>0}}_{\pi,\s}\subset T^*\Bun_{G,NH_{>0}}$, defined as the transport of $\cN^{\Eis, B}_{\pi,\s}$ along the projection $\Bun_{G,NH_{>0}}\to \Bun_{G,B}$, or equivalently as the transport of $\cN^{\Eis, N}_{\pi,\s}$ along the projection $\Bun_{G,N}\to \Bun_{G,NH_{>0}}$.

The Hecke stack $\Hk_\xi$ has an action by $H\times H$ by changing the $N$-reductions $\cE_{1,\xi,N}$ and $\cE_{2,\xi,N}$. Let $\ov\Hk=\Hk/(H_{>0}\times H_{>0})$. It fits into a diagram
\begin{equation}\label{ov Hk}
\xymatrix{& \ov\Hk\ar[rr]^-{\inv_\xi}\ar[dr]^{ q_{2}}\ar[dl]_{  q_{1}}  && \bI^{\c}H_{>0}\bs G\lr{z}/\bI^{\c}H_{>0}\\
\Bun_{G,NH_{>0}} &  & \Bun_{G,NH_{>0}}  
}
\end{equation}

For $w\in \tilW$, let $\ov\Hk_{w}$ be the preimage of $\bI^{\c}H_{>0}\bs \bI w \bI/\bI^{\c}H_{>0}$ under $\inv_\xi$. Let $q_{i,w}: \ov\Hk_{w}\to \Bun_{G,NH_{>0}}$ be the restriction of $q_i$ to $\ov\Hk_w$, for $i=1,2$.

For a simple reflection $s$, let $\ov\Hk_{\le s}$ be the preimage of $\bI^{\c}H_{>0}\bs \bP_s/\bI^{\c}H_{>0}$ under $\inv_\xi$. Then $\ov\Hk_{\le s}\cong \Bun_{G,NH_{>0}}\times_{\Bun_{G,\bP_{s}}}\Bun_{G,NH_{>0}}$. Restricting \eqref{ov Hk} to $\ov\Hk_{\le s}$ we get a diagram
\begin{equation*}
\xymatrix{& \ov\Hk_{\le s}\ar[rr]^-{\inv_{\le s}}\ar[dr]^{ q_{2,\le s}}\ar[dl]_{  q_{1,\le s}}  && \bI^{\c}H_{>0}\bs \bP_{s}/\bI^{\c}H_{>0}\\
\Bun_{G,NH_{>0}} &  & \Bun_{G,NH_{>0}}  
}
\end{equation*}
Note that $ q_{1,\le s}$ and $q_{2,\le s}$ are now proper with fibers $\bP_{s}/\bI^{\c}H_{>0}$ (a fiber bundle over $\PP^{1}$ with fibers the compact torus $H(\CC)/H_{>0}$). We have $ \ov\Hk_{\le s}=\ov \Hk_{s}\cup \ov \Hk_{1}$ according to the decomposition $\bP_{s}=\bI s\bI\cup \bI$. Let $i: \ov \Hk_{1}\incl \ov \Hk_{\le s}$ and $j: \ov \Hk_{s}\incl \ov \Hk_{\le s}$ be the closed and open embeddings. 

All objects $\cK\in \cH$ descend to $\bI^{\c}H_{>0}\bs G\lr{z}/\bI^{\c}H_{>0}$ by monodromicity. We denote the descent still by $\cK$. Also, any object $\cF\in \Sh_{\cN}(\Bun_{G,N})$ descends to $\Bun_{G,NH_{>0}}$ by $H$-monodromicity, which we still denote by $\cF$.

The map $(q_{1,\le s}, \inv_{\le s}): \ov\Hk_{\le s}\to \Bun_{G,NH_{>0}}\times \bI^{\c}H_{>0}\bs \bP_{s}/\bI^{\c}H_{>0}$ is in fact smooth (submersive). Therefore $\ssupp(q_{1,\le s}^{*}\cF\ot\inv_{\le s}^{*}\cK)$ is the transport of $\ssupp(\cF)\times\ssupp(\cK)$ under $\oll{(q_{1,\le s}, \inv_{\le s})}$. Now $\ssupp(\cK)$ is contained in the union of the zero section and the conormal bundle of the closed stratum $\bI^{\c}\bs \bI/\bI^{\c}$. We conclude that $\ssupp(q_{1,\le s}^{*}\cF\ot\inv_{\le s}^{*}\cK)$ is contained in the union 
\begin{equation*}
\oll{q_{1,\le s}}(\cN^{\Eis,NH_{>0}}_{\pi,\s})\cup \orr{i}\oll{q_{1,1}}(\cN^{\Eis,NH_{>0}}_{\pi,\s}).
\end{equation*}
Since $q_{2,\le s}$ is proper, the singular support of $H_{\xi,\cK}(\cF)=q_{2,\le s *}(q_{1,\le s}^{*}\cF\ot\inv_{\le s}^{*}\cK)$ is contained in
\begin{equation}\label{estimate ss}
\orr{q_{2,\le s}}\oll{q}_{1,\le s}(\cN^{\Eis,NH_{>0}}_{\pi,\s})\cup \orr{q_{2,\le s}}\orr{i}\oll{q_{1,1}}(\cN^{\Eis,NH_{>0}}_{\pi,\s})\subset T^{*}\Bun_{G,NH_{>0}}.
\end{equation}
Note $q_{2,1}=q_{2,\le s}\c i$, hence $\orr{q_{2,\le s}}\orr{i}=\orr{q_{2,1}}$.

For an isotropic subset $\L\subset T^*\Bun_{G,B}$, we denote by $\L_{top}$ the union of its irreducible components of dimension $\dim \Bun_{G,B}$. In other words, $\L_{top}$ is the maximal Lagrangian in $\L$. Since the singular support of $H_{\xi,\cK}(\cF)$ is always a Lagrangian, in view of \eqref{estimate ss}, to show that $H_{\xi,\cK}(\cF)\subset \cN^{\Eis,NH_{>0}}_{\pi,\s}$,  it suffices to show
\begin{eqnarray}\label{Hk s Lag}
\left(\orr{q_{2,\le s}}\oll{q_{1,\le s}}(\cN^{\Eis,NH_{>0}}_{\pi,\s})\right)_{top}\subset \cN^{\Eis,NH_{>0}}_{\pi,\s}\\
\label{Hk 1 Lag}\left(\orr{q_{2,1}}\oll{q_{1,1}}(\cN^{\Eis,NH_{>0}}_{\pi,\s})\right)_{top}\subset \cN^{\Eis,NH_{>0}}_{\pi,\s}.
\end{eqnarray}

Similarly, for a length zero element $\om\in \Om$, restricting the diagram \eqref{ov Hk} to $\ov\Hk_{\om}$ gives a self-correspondence of $\Bun_{G,NH_{>0}}$
\begin{equation*}
    \xymatrix{& \ov \Hk_\om\ar[dl]_{q_{1,\om}}\ar[dr]^{ q_{2,\om}}\\
    \Bun_{G,NH_{>0}} & & \Bun_{G,NH_{>0}}}
\end{equation*}
Now $q_{i,\om}$ are torsors for the compact torus $H(\CC)/H_{>0}$. The same argument as in the case of a simple reflection shows that, $H_{\xi,\wh\D(\om)}$ preserves nilpotent sheaves if we can show
\begin{equation}\label{Hk om Lag}
\left(\orr{q_{2,\om}}\oll{q_{1,\om}}(\cN^{\Eis,NH_{>0}}_{\pi,\s})\right)_{top}\subset \cN^{\Eis,NH_{>0}}_{\pi,\s}.
\end{equation}

Now we show \eqref{Hk s Lag}, and the arguments for \eqref{Hk 1 Lag} and \eqref{Hk om Lag} are simpler and will be omitted.  Since $\cN^{\Eis,NH_{>0}}_{\pi,\s}$ is the transport of $\cN^{\Eis,B}_{\pi,\s}$ from $T^{*}\Bun_{G,B}$, it suffices to prove the analogue of \eqref{Hk s Lag} for $\Bun_{G,B}$. Namely in the diagram
\begin{equation*}
\xymatrix{\Bun_{G,B} & \Hk_{\le s}=\Bun_{G,B}\times_{\Bun_{G,\bP_{s}}}\Bun_{G,B}\ar[l]_-{r_{1}}\ar[r]^-{r_{2}} & \Bun_{G,B}}
\end{equation*}
we need to show
\begin{equation}\label{Hk s NB}
\left(\orr{r_{2}}\oll{r_{1}}(\cN^{\Eis,B}_{\pi,\s})\right)_{top}\subset \cN^{\Eis,B}_{\pi,\s}.
\end{equation}
The notation $(-)_{top}$ makes sense since by Lemma \ref{l:pres iso}, $\orr{r_{2}}\oll{r_{1}}(\cN^{\Eis,B}_{\pi,\s})$ is a conic isotropic constructible subset of $T^{*}\Bun_{G,B}$, so
\begin{equation*}
\L':=\left(\orr{r_{2}}\oll{r_{1}}(\cN^{\Eis,B}_{\pi,\s})\right)_{top}
\end{equation*}
is the maximal Lagrangian (automatically conic) inside $\orr{r_{2}}\oll{r_{1}}(\cN^{\Eis,B}_{\pi,\s})$. It is easy to see that $\orr{r_{2}}$ and $\oll{r_{1}}$ preserve the non-characteristic property with respect to the projections to $S$, therefore $\L'$ is non-characteristic with respect to $\Pi^{B}_{\s}: \Bun_{G,B}\to S$. By Proposition \ref{p:contained in lifting}, to show that $\L'$ is contained in $\cN^{\Eis, B}_{\pi,\s}$, which is the lifting of $\cN^{\rel,B}_{\pi,\s}$ by Theorem \ref{th:univ cone Iw}, it suffices to show that the image of $\L'$ in the relative cotangent $T^{*}(\Pi^{B}_{\s})$ is contained in $\cN^{\rel,B}_{\pi,\s}$. Let $\oll{r_{1}}^{\rel}$ and $\orr{r_{2}}^{\rel}$ be the relative transport (with respect to $S$) defined as in \S\ref{sss:rel transport}, then to show \eqref{Hk s NB}, it boils down to showing
\begin{equation*}
\orr{r_{2}}^{\rel}\oll{r_{1}}^{\rel}(\cN^{\rel,B}_{\pi,\s})\subset \cN^{\rel,B}_{\pi,\s}.
\end{equation*}
This can be checked by restricting to each closed point $s\in S$, for which the statement is clear, since Hecke modification on Higgs bundles does not change the Higgs bundle at the generic point of the curve $\frX_{s}$, so in particular nilpotence is preserved. This finishes the proof of Theorem \ref{th:aff Hk pres}.

\qed

\end{document}